\definecolor{red2}{RGB}{220,10,48}
\newcommand{\norm}[2]{\left\Vert {#1} \right\Vert_{#2}}
\newcommand{\mathbbmm}[1]{\text{\usefont{U}{bbm}{m}{n}#1}}
\newcommand{\ind}{\mathbbmm{1}}
\DeclareMathOperator{\dive}{div}
\newcommand{\mres}{\mathbin{\vrule height 1.6ex depth 0pt width
0.13ex\vrule height 0.13ex depth 0pt width 1.3ex}}
\def\eps{{\varepsilon}}
\def\N{\mathbb{N}}
\def\O{\Omega}
\def\R{\mathbb{R}}
\def\HH{\mathcal{H}}
\newcommand{\be}{\begin{equation}}
\newcommand{\ee}{\end{equation}}
\newcommand{\loc}{\mathrm{loc}}
\numberwithin{equation}{section}
\theoremstyle{definition}
\newtheorem{theo}{Theorem}[section]
\newtheorem{theorem}[theo]{Theorem}
\newtheorem{lemma}[theo]{Lemma}
\newtheorem{definition}[theo]{Definition}
\newtheorem{open}[theo]{Open problem}
\theoremstyle{remark}
\newtheorem{remark}[theo]{Remark}
\def\XXint#1#2#3{{\setbox0=\hbox{$#1{#2#3}{\int}$ }
\vcenter{\hbox{$#2#3$ }}\kern-.6\wd0}}
\title{Vectorial Bernoulli Problems and Free Boundary Systems}
\author[G.~Tortone, B.~Velichkov]{Giorgio Tortone and Bozhidar Velichkov}
\address {Giorgio Tortone \newline \indent
	Dipartimento di Matematica ``Giuseppe Peano'', Universit\`a di Torino \newline \indent
Via Carlo Alberto 10, 10123 Torino, Italy}
\email{giorgio.tortone@unito.it}
\address {Bozhidar Velichkov \newline \indent
Dipartimento di Matematica, Universit\`a di Pisa \newline \indent
Largo Bruno Pontecorvo, 5, 56127 Pisa, Italy}
\email{bozhidar.velichkov@unipi.it}
\begin{document}

\subjclass[2020] {35R35, 49Q10, 35B65, 49N60, 35N25}
\keywords{Free boundary regularity, free boundary system, vectorial problems, one-phase Bernoulli problem, epsilon regularity, blow-up analysis}

\begin{abstract}
In this survey we go through some of the recent results about the regularity of vectorial free boundary problems of Bernoulli type and free boundary systems. The aim is to illustrate the general methodologies as well as to outline a selection of notable open questions.
\end{abstract}

\maketitle
\setcounter{tocdepth}{1}
\tableofcontents
\section{Introduction}
Free boundary problems are a special class of boundary value problems in which the domain where the PDE is solved is not fixed in advance, but depends on the solution of the PDE. These problems involve solving simultaneously for the unknown domain and the unknown function by exploiting the presence of an overdetermined boundary condition on the "free" part of the boundary of the domain,
which is usually dictated by physical laws or other constraints governing the phase transition.

This survey is dedicated to the Bernoulli free boundary systems, which are free boundary problems in which the unknown function is vector-valued and the overdetermined boundary condition is of Bernoulli type. We will briefly explain the motivations for introducing this class of problems, we will highlight some of the key techniques and recent results in the study of vectorial free boundaries, and finally, we will outline a selection of open questions.

\subsection{The one-phase Bernoulli problem} We start with a brief overview of the theory of minimizers for the one-phase Bernoulli problem, which has played a central role in shaping much of the recent research in the field of free boundary problems. 
The one-phase problem can be described as follows: given $\Lambda>0$, a bounded open set $D\subset \R^d$ (container), a function  $g:\partial D\to \R$ (the boundary condition), one wants to find a non-negative function $u\colon D \to \R$ such that 
\be\label{e:one-phase-bernoulli}
\Delta u = 0 \quad\mbox{in }\{u>0\},\qquad 
u=g \quad\mbox{on }\partial D
\qquad \mbox{and}\qquad 
|\nabla u|=\sqrt{\Lambda}\quad\text{on}\quad D\cap \partial\{u>0\}\,.
\ee
The free boundary is  defined as the portion of  $\partial\{u>0\}$ lying inside the open set $D$. 
The main peculiarity of Bernoulli-type free boundary problems is the presence of an overdetermined condition, where the behavior of the gradient of the solution is prescribed on the free interface, which implies a discontinuity at the level of the partial derivatives.

\subsubsection{Physical motivation} The one-phase Bernoulli problem arises naturally in models in fluid dynamics; here we briefly present a classical model and we refer to the book  \cite{Friedman1982:BookVariationalPrinciplesFreeBoundaryProblems} for more details and further discussion on other related models. We consider a fluid flowing through a (simply connected open) region $\Omega\subset\R^2$. We suppose that the fluid is incompressible and irrotational, so that its velocity $v=(v_1,v_2):\overline \Omega\to\R^2$ satisfies the PDEs 
$$\text{\rm div\,}v=0\quad\text{and}\quad \text{\rm rot\,}v=0\quad\text{in}\quad \Omega\,.$$
Finally, we suppose that there is a relatively open and connected portion of the boundary $\Gamma\subset\partial \Omega$ such that:
\begin{itemize}
\item[(i)] the velocity $v$ is prescribed on $\partial \Omega\setminus\Gamma$;
\item[(ii)] $\Gamma$ is a streamline of $v$ (that is, $v$ is always tangent to $\Gamma$);
\item[(iii)] the pressure is constant on $\Gamma$;
\item[(iv)] the velocity $v$ satisfies the Bernoulli law 
$$| v|^2=\Lambda\quad\text{on}\quad \Gamma,$$
for a non-zero constant $\Lambda$.
\end{itemize}
Then, one can find a function $u:\overline \Omega\to\R$ called {\it stream function} such that: 
$$\Delta u=0\quad\text{in}\quad \Omega\ ,\qquad \nabla u=(-v_2,v_1)\quad\text{in}\quad \overline \Omega\ ,\quad u=0\quad\text{on}\quad \Gamma.$$
In terms of the stream function, the Bernoulli law translates into the overdetermined condition 
$$|\nabla u|=\sqrt{\Lambda}\quad\text{on}\quad\Gamma,$$
so the stream function $u$ satisfies \eqref{e:one-phase-bernoulli} in a neighborhood of every boundary point $x_0\in\Gamma$. Vice-versa, if $u$ solves \eqref{e:one-phase-bernoulli}, then $(-\partial_2u,\partial_1u)$ is the velocity field of a fluid satisfying the Bernoulli law on $\Gamma=\Omega\cap\partial\{u>0\}$.

\subsubsection{Variational formulation} The existence of solutions $u$ satisfying \eqref{e:one-phase-bernoulli} can be obtained via a variational principle. Precisely, in the pioneering paper \cite{AltCaffarelli:OnePhaseFreeBd} Alt and Caffarelli developed a regularity theory for minimizers $u:D\to\R$ of the functional 
\be\label{e:ac-functional}
\mathcal F(v,D) :=\int_D|\nabla v|^2\,dx+\Lambda|\{v>0\}\cap D|,
\ee
among all functions $v\in H^1(D)$ with fixed boundary values $g$ on $\partial D$ (that is, $v-g\in H^1_0(D)$).\\
It is immediate to check that if the boundary datum $g:\partial D\to\R$ is nonnegative, then any minimizer $u\in H^1(D)$ of $\mathcal F$ is nonnegative, so the sign of $u$ is not a constraint (as for instance in the obstacle problem), but a simple consequence of the variational structure of the problem. A key feature of the functional \eqref{e:ac-functional} is that the overdetermined boundary condition \eqref{e:one-phase-bernoulli} can be obtained as a consequence of the minimality of $u$. 
Precisely, let us denote with $\O_u:=\{u>0\}\subset D$ the positivity set of $u$, then the equation satisfied by $u$ inside the set $\O_u\subset D$ and the overdetermined condition on the free boundary $\partial \O_u\cap D$ are in fact the Euler-Lagrange equations associated with two specific types of variations:
\begin{enumerate}
    \item[(i)] \textbf{Outer variations. } The minimality of $u$ gives that
     \be\label{e:outer0}
     \mathcal{F}\left(u(x),D\right)\le \mathcal{F}\left(u(x)+ \varphi(x),D\right) \quad\mbox{for every }\varphi \in C^\infty_c(D).
    \ee  
    By choosing appropriately the perturbation $\varphi$ one can prove that $u$ is Lipschitz continuous in $D$ and has linear growth around points on $\partial\Omega_u\cap D$.
    In particular, $\Omega_u$ is open and by computing the first (outer) variation for functions supported in $\Omega_u$
    \be\label{e:outer}
    \left.\frac{d}{dt}\right\lvert_{t=0}\mathcal{F}\left(u(x)+t \varphi(x),D\right) =0\quad\mbox{for every }\varphi \in C^\infty_c(\O_u),
    \ee
    one gets simply
    \be\label{e:outer2}
    \int_{\Omega_u}\nabla u\cdot\nabla\varphi\,dx=0 \quad\mbox{for every }\varphi \in C^\infty_c(\O_u),
    \ee
    so that $u$ is harmonic in $\O_u$.\vspace{0.2cm}
    \item[(ii)] \textbf{Inner variations. } For every smooth vector field $\xi:D\to \R^d$ with compact support we have that $$
     \mathcal{F}\left(u(x),D\right)\le \mathcal{F}\left(u(x+t\xi(x)),D\right),
    $$
    so we can compute the first (inner) variation with respect to vector fields supported in $D$ 
    \be\label{e:inner}
    \left.\frac{d}{dt}\right\lvert_{t=0}\mathcal{F}\left(u(x+t\xi(x)),D\right) =0\qquad\mbox{for every}\quad\xi \in C^\infty_c(D;\R^d),
    \ee
    which can be written in the form 
        \be\label{e:inner2}
\int_{\Omega_u}\text{\rm div}\,\Big(|\nabla u|^2\xi-2(\xi\cdot\nabla u)\nabla u+\Lambda\xi\Big)\,dx=0\qquad\mbox{for every}\quad\xi \in C^\infty_c(D;\R^d).
\ee
This identity allows to prove the Weiss' monotonicity formula, which is fundamental for the analysis of the blow-up limits and the singular set. Moreover, if the free boundary $\partial\O_u\cap D$ is $C^{1,\alpha}$-regular (so that $u \in C^{1,\alpha}(\overline{\O_u}\cap D)$ by classical elliptic regularity), then one can deduce the validity of the free boundary condition 
\begin{equation}\label{e:stationarity-condition-for-smooth-boundaries}
|\nabla u|=\sqrt{\Lambda}\quad\text{on}\quad\partial\O_u\cap D,
\end{equation}
simply by integrating by parts the above expression (see for instance \cite[Lemma 9.5]{Velichkov:BookRegularityOnePhaseFreeBd}). 
    \end{enumerate}
If $u:D\to\R$ is a non-negative function with smooth free boundary $\partial\{u>0\}\cap D$, which is harmonic in $\{u>0\}\cap D$, we will say that {\it $u$ is a solution of the one-phase Bernoulli problem} or a {\it stationary point of the Alt-Caffarelli energy} if \eqref{e:stationarity-condition-for-smooth-boundaries} holds; we refer to \cite{liu-wang-wei,hauswirth-helein-pacard,traizet} for examples of non-trivial smooth stationary solutions.  We also notice that the notion of stationary solution can be defined without assuming the regularity of $u$ and $\partial\{u>0\}$, but only by using the expressions \eqref{e:inner} and \eqref{e:outer0} for the outer and the inner variations. 

\subsubsection{Regularity theory for minimizers} We summarize the main regularity results for one-phase minimizers $u$ and their positivity sets $\Omega_u$ in the next theorem; for more details we refer to \cite{Velichkov:BookRegularityOnePhaseFreeBd}. 
\begin{theorem}\label{t:one-phase-scalar}
    Let $u\colon D \to \R$ be a minimizer of $\mathcal{F}$ in $D$. Then $u$ is (locally) Lipschitz continuous in $D$ and the set $\O_u := \{u>0\}$ is open. Moreover, the free boundary can be decomposed as the disjoint union of a regular and a singular part 
    $$
    \partial \O_u \cap D = \mathrm{Reg}(\partial \O_u) \cup \mathrm{Sing}(\partial \O_u),
    $$
    where:\vspace{-0.8cm}\\
     \begin{minipage}{1.5\linewidth}
\begin{minipage}{0.38\linewidth}
\vspace{0.1cm}
\begin{enumerate}\vspace{0.1cm}
        \item[(i)] the \textbf{Regular part} $\mathrm{Reg}(\partial \O_u)$ is a smooth manifold of dimension $(d-1)$ and $u$ satisfies the free boundary condition
        $$
        |\nabla u|=\sqrt{\Lambda} \quad \mbox{on }\mathrm{Reg}(\partial \O_u);\vspace{0.2cm} 
        $$
\end{enumerate}
  \end{minipage}
\hspace{0.01\linewidth}
 \begin{minipage}{0.3\linewidth}
                      \vspace{0.5cm}\begin{tikzpicture}[scale=2.0]
 \def\a{-0.53}
 \def\b{0.52}
 \def\p{0.55}
 \def\q{1.1}
  \begin{scope}
  \clip (0,0) ellipse (1cm and 0.3cm);
\filldraw[red2!55, domain=-0.8:0.78]  (0,-1) -- plot (\x^2,0.55*\x ) -- (1.5,0) -- (0,-1) -- cycle;
 \end{scope}
    \draw[black] (0.41,0) node {$u=0$};
       \draw[black,thick] (1,0) arc (0:180:1cm and 0.3cm);
  \draw[black,thick] (-1,0) arc (180:360:1cm and 0.3cm);
   \fill [white] plot [smooth] coordinates {(\a^2,0.55*\a) (-0.75,0.3)  (-1,1) (-0.5,0.75) (-0.25,0.7) (0,0.55) (\b^2,0.55*\b) (0.5^2,0.55*0.5)
    (0.27^2,0.55*0.3) (0,0) (-0.29^2,-0.55*0.3) (-0.5^2,-0.55*0.5) (\a^2,0.55*\a) };
   \draw [black,thick] plot [smooth] coordinates {(\a^2,0.55*\a) (-0.75,0.3)  (-1,1) (-0.5,0.75) (-0.25,0.7) (0,0.55) (\b^2,0.55*\b) };
      \draw[color=red2!65!gray, very thick, domain=-0.72:0.78] plot (\x^2,0.55*\x ) node[anchor  =south ]{$\quad|\nabla u|=\sqrt{\Lambda}$};
\draw[color=red2!65!gray] (-0.9,-0.4) node {$\mathrm{Reg}(\partial \O_u)$};
          \draw[black,thick,dashed] (1,0) arc (0:180:1cm and 0.3cm);
   \draw (-1,1) node[anchor=east] {$u$};
   \draw [black] (-0.8,1.2) circle (0pt);
   \draw[black]  (-0.45,0.97) node[anchor=west] {$\Delta u=0$};
   \draw[black,latex-]  (-0.55,0.6) --(-0.15,0.85) ;
   \end{tikzpicture}
      \end{minipage}
  \end{minipage}
    \begin{enumerate}\vspace{0.1cm}
        \item[(ii)] the \textbf{Singular part} $\mathrm{Sing}(\partial \O_u)$ is a closed subset of $\partial \O_u \cap D$. Moreover\vspace{0.2cm}  
        \begin{itemize}
\item[(1)] if $d<d^*$, then $\text{\rm Sing}(\partial \Omega_u)$ is empty;
\item[(2)] if $d=d^*$, then $\text{\rm Sing}(\partial \Omega_u)$ is a discrete set of points;
\item[(3)] if $d> d^*$, then the Hausdorff dimension of $\text{\rm Sing}(\partial \Omega_u)$ is at most $d-d^*$,\vspace{0.2cm}
\end{itemize}
        where $d^* \in \{5,6,7\}$ is the smallest dimension in which
1-homogeneous minimizers of $\mathcal F$ in $B_1$ may exhibit isolated singularities.
    \end{enumerate}
\end{theorem}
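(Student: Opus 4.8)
The plan is to follow the Alt--Caffarelli strategy, reorganized around the Weiss monotonicity formula and Federer's dimension reduction. \emph{Step 1 (Lipschitz continuity and nondegeneracy).} Starting from the outer variation inequality \eqref{e:outer0}, I would first show that $u$ is a subsolution, giving local boundedness, and then prove local Lipschitz continuity by comparing $u$ with its harmonic replacement on small balls and controlling the linear growth near $\partial\Omega_u$. The complementary estimate is nondegeneracy: at a free boundary point $x_0$ one has $\sup_{B_r(x_0)}u\ge c\,r$, obtained by testing minimality against the competitor that annihilates $u$ on $B_r(x_0)$. Lipschitz continuity makes $\Omega_u$ open, and combined with nondegeneracy it yields uniform density bounds from both sides along $\partial\Omega_u\cap D$; these imply that $\Omega_u$ has locally finite perimeter and that $\mathcal H^{d-1}(\partial\Omega_u\cap D)$ is locally finite.

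\emph{Step 2 (Blow-up analysis).} At a free boundary point $x_0$ I would study the rescalings $u_{x_0,r}(x):=r^{-1}u(x_0+rx)$; the uniform Lipschitz bound and nondegeneracy ensure that, up to subsequences, $u_{x_0,r}\to u_0$ strongly in $H^1_{\loc}$, with $u_0$ again a minimizer. The key tool is the Weiss monotonicity formula, derived from the inner variation identity \eqref{e:inner2}: the Weiss energy $r\mapsto W(u,x_0,r)$ is nondecreasing, and its constancy along the blow-up forces every blow-up limit $u_0$ to be $1$-homogeneous. One then classifies such homogeneous minimizers: either $u_0=\sqrt{\Lambda}\,(x\cdot\nu)_+$ is a half-plane solution, and $x_0$ is declared regular, or $u_0$ is genuinely singular.

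\emph{Step 3 ($\varepsilon$-regularity).} If some blow-up at $x_0$ is a half-plane solution, then the free boundary is sufficiently flat in a small ball; the heart of the matter is an improvement-of-flatness scheme showing that flatness at one dyadic scale propagates, with geometric decay, to the next. This yields that $\mathrm{Reg}(\partial\Omega_u)$ is a $C^{1,\alpha}$ hypersurface near $x_0$, and a bootstrap via elliptic regularity upgrades it to $C^\infty$ (indeed real-analytic); the condition $|\nabla u|=\sqrt{\Lambda}$ then holds classically by \eqref{e:stationarity-condition-for-smooth-boundaries}. I expect this flatness-improvement step to be the main obstacle, as it relies on a delicate compactness-and-contradiction argument and boundary Harnack estimates for the associated linearized problem.

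\emph{Step 4 (Singular set).} Defining $\mathrm{Sing}(\partial\Omega_u)$ as the complement of the regular part inside $\partial\Omega_u\cap D$, I would conclude with Federer's dimension reduction combined with the monotonicity formula. The threshold dimension $d^*\in\{5,6,7\}$ enters as the smallest dimension admitting a nontrivial singular $1$-homogeneous minimizer; stratifying the singular set by the dimension of the spine of the blow-up and using the nonexistence of such minimizers below $d^*$ gives the stated dichotomy: $\mathrm{Sing}(\partial\Omega_u)$ is empty for $d<d^*$, discrete for $d=d^*$, and of Hausdorff dimension at most $d-d^*$ for $d>d^*$.
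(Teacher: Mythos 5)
Your outline reproduces exactly the strategy the paper attributes to the literature for this result: Lipschitz continuity and nondegeneracy in the spirit of Alt--Caffarelli, blow-up analysis governed by the Weiss monotonicity formula, De Silva-type improvement of flatness for the regular part (upgraded to $C^\infty$ by Kinderlehrer--Nirenberg), and Federer dimension reduction with the threshold $d^*$ pinned down by the stability analysis of Caffarelli--Jerison--Kenig, Jerison--Savin and the De Silva--Jerison example. The proposal is correct and follows essentially the same route as the paper's own discussion, so no further comment is needed.
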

The overall approach and many of the used tools are universal and
have counterparts in other geometric variational problems. For the sake of completeness we just mention the different contributions for the main regularity result:
\begin{enumerate}
    \item[(i)] In \cite{AltCaffarelli:OnePhaseFreeBd}, Alt and Caffarelli proved that minimizers exist and that they are (locally) Lipschitz continuous. Moreover, they decompose the free boundary in terms of a $C^{1,\alpha}$-regular $(d-1)$-dimensional surface and a singular set satisfying $\mathcal{H}^{d-1}(\mathrm{Sing}(\partial \O_u))=0$.\smallskip
    \item[(ii)] \textbf{Epsilon-regularity theorem. }In \cite{DeSilva:FreeBdRegularityOnePhase}, De Silva introduced a more versatile approach for the $\eps$-regularity theorem of Alt and Caffarelli, which exploits a viscosity formulation of the free boundary condition. Finally, thanks to \cite{KinderlehrerNirenberg1977:AnalyticFreeBd,KinderlehrerNirenbergSpruck} and the appendix in \cite{KriventsovLin2018:nondegenerate}, we know that the $C^{1,\alpha}$ free boundaries are $C^\infty$ smooth.\smallskip
    \item[(iii)] \textbf{Monotonicity formula. }In \cite{Weiss99:PartialRegularityFreeBd}, Weiss proved the validity of a monotonicity formula, which allows to estimate the Hausdorff dimension of the singular set in terms of the first dimension $d^*$ in which homogeneous minimizers exhibit isolated singularities.
    \end{enumerate}
    \vspace{-0.3cm}
    \begin{minipage}{1.5\linewidth}
    \begin{minipage}{0.37\linewidth}
      \begin{enumerate}
      \item[(iv)] \textbf{Singularities and stability.} In \cite{CaffarelliJerisonKenig04:NoSingularCones3D} Caffarelli, Jerison and Kenig computed the second variation of $\mathcal{F}$ along inner variations, which allows to exclude the presence of singularities in three dimensions (i.e., $d^*\geq 4$). 
    Few years later, in \cite{DeSilvaJerison09:SingularConesIn7D} De Silva and Jerison construct in $\R^7$ an homogeneous minimizer exhibiting an isolated singularity (i.e., $d^*\leq 7$) while in \cite{JerisonSavin15:NoSingularCones4D} Jerison and Savin improved the lower bound of $d^*$ by showing that $d^*\geq 5$.
     \end{enumerate}
      \end{minipage}\hspace{0.05\linewidth}
\begin{minipage}{0.37\linewidth}
\hspace{-1.3cm}
      \begin{tikzpicture}[scale=1.8]
   \begin{scope}
  \clip (0,0) ellipse (1 and 1);
\fill[white] (0,0)-- (1,0.6) -- (1,-0.6) -- (0,0);
 \end{scope}
   \begin{scope}
  \clip (0,0) ellipse (1 and 1);
\fill[white] (0,0)-- (-1,0.6) -- (-1,-0.6) -- (0,0);
 \end{scope}
   \begin{scope}
  \clip (0,0) ellipse (1 and 1);
\fill[red2!55] (0,0)-- (0.805,-0.43) -- (-0.805,-0.43) -- (0,0);
 \end{scope}
 \begin{scope}
  \clip (0,0) ellipse (1 and 1);
\fill[red2!55] (0,0)-- (0.805,0.43) -- (-0.805,0.43) -- (0,0);
 \end{scope}
 \draw[thick,red2!55!gray,dashed,fill=white] (0.859,0.5) arc (0:180:0.859cm and 0.2cm);
  \draw[thick,red2!55!gray,fill=white] (-0.859,0.5) arc (180:360:0.859cm and 0.2cm);
  \draw[thick,red2!55!gray,dashed,fill=red2!35] (0.859,0.5) arc (0:180:0.859cm and 0.2cm);
  \draw[thick,red2!55!gray,fill=red2!35] (-0.859,0.5) arc (180:360:0.859cm and 0.2cm);

  \draw [black,-latex, dashed] (0,0) -- (0,1.2) node [anchor = west] {$x_{d^*}$};
  \filldraw [black] (1.29,-0.1) circle (0pt) node {$\mathbb{S}^{d^*-1}$}; 
\draw [black] (-0.75,1.6) circle (0pt);

   \draw[black,dashed] (1,0) arc (0:180:1cm and 0.2cm);
   \draw[thick,red2!55!gray,dashed,fill=red2!55] (0.859,-0.5) arc (0:180:0.859cm and 0.2cm);
  \draw[thick,red2!55!gray,fill=red2!55] (-0.859,-0.5) arc (180:360:0.859cm and 0.2cm);
       \draw[thick,red2!55!gray] (-0.803,-0.43) arc (160:381:0.856cm and 0.2cm);
     \draw [thick,red2!55!gray] (0.805,0.43)  -- (-0.805,-0.43);
   \draw [thick,red2!55!gray] (-0.805,0.43) -- (0.805,-0.43);
   \draw[thick,black] (1,0) arc (0:180:1cm);
     \draw[thick,black] (-1,0) arc (180:360:1cm and 0.2cm);
   \draw[thick, black] (-1,0) arc (-180:0:1cm);
     \draw[black,-latex] (-0.9,0.9) node[anchor=east] {$\tilde{u}=0$} arc (120:0:0.35cm and 0.4cm);
      \draw[black] (-1.4,-1.3) circle (0pt) ;
   \draw[black,-latex]  (-1.05,0.35) node[anchor=east] {$\Delta \tilde{u}=0$} -- (-0.7,0.2);
   \draw[black,-latex]  (-1.05,-0.2) -- (-0.7,-0.36);
\draw (-1.05,-0.2) node[anchor=east] {$|\nabla \tilde{u}|=\sqrt{\Lambda}$};
        \end{tikzpicture}
        \end{minipage}    
  \end{minipage} 
Finally, we mention that the classical one-phase Bernoulli problem has inspired numerous recent developments in the theory of free boundaries and in different problems arising from Calculus of Variations and PDEs. For reader convenience, we mention here few examples in the case of two-phases problem \cite{AltCaffarelliFriedman1984:TwoPhaseBernoulli,DePhilippisSpolaorVelichkov:Inventiones,DeSilvaFerrariSalsa:2PhaseFreeBdDivergenceForm}; shape optimization problems \cite{RussTreyVelichkov:CalcVarPDE, BrianconLamboley:RegularityShapeFirstEigenLaplacian,Briancon2010}; thin free boundaries \cite{DS1,DS2,DS3,EKPSS}; 
nonlocal operators \cite{RosOtonWeidner:improvement,RosOtonWeidner:optimal}; 
capillarity problems \cite{ChodoshEdelenLi2024,DeMasiEdelenGasparettoLi,DePhilippisFuscoMorini:drop,FerreriTortoneVelichkov:Capillarity}. Finally, for what concerns the up-to-the-fixed-boundary regularity of the solutions and their free boundaries, we refer to \cite{ChangLaraSavin:BoundaryRegularityOnePhase,DePhilippisSpolaorVelichkov:JEMS,FerreriSpolaorVelichkov:2D,FerreriSpolaorVelichkov:BoundaryBranchingOnePhaseBernoulli}.
\subsection{General Bernoulli free boundary systems}\label{sub:intro:general-systems-short} The Bernoulli free boundary systems are generalizations of the one-phase problem \eqref{e:one-phase-bernoulli} to the case of vector-valued functions. 
A very general form of a Bernoulli free boundary system is the following. Given $k\in \N$, a bounded open set $D\subset \R^d$, and a boundary datum 
$$G:=(g_1,\dots,g_k):\partial D\to\R^k,$$
we search for a domain $\O\subset D$ and a vector-valued function $U=(u_1,\dots,u_k)\colon \O \to \R^k$ such that
\begin{enumerate}
    \item[(i)] $U$ solves a system of PDEs of the form
    $$
    \sum_{j=1}^k\mathrm{tr}(A_{ij}\nabla^2 u_j) + (\nabla u_j\cdot b_{ij}) + c_{ij} u_j = f_i\quad \mbox{in }\O;
    $$
    \item[(ii)] $U$ is prescribed on the fixed part of the boundary, that is 
    $U=G$ on $\partial D\cap\partial\Omega$;
    \item[(iii)] $U$ satisfies a Bernoulli-type condition on the free boundary $\partial \O\cap D$
    \be\label{e:vectorial-bernoulli-general}
    U=0\quad\text{and}\quad \Phi(x,\nabla u_1,\dots,\nabla u_k)=0\quad\text{on}\quad D\cap \partial\O,
    \ee
    for some function $\Phi\colon D \times (\R^d)^k \to \R$.
\end{enumerate}
The focus here is not on the second order system the interior of $\Omega$, but on the free boundary condition, which is not given in terms of the graph of a single function, but is obtained as a combination of the gradients of the different components of $U$. It is precisely this interaction at the free boundary that makes the regularity theory for this problem challenging.  

The theory of Bernoulli free boundary systems was developed in the last years thanks to contributions of several different groups of authors. In the next two subsections we discuss the Bernoulli free boundary systems that have been studied in these years.

\subsection{The vectorial Bernoulli problem}\label{sub:intro:vectorial} Let the domain $D\subset\R^d$ and the dimension $k\ge 1$ be fixed. For a vector-valued function $U\colon D\to \R^k$, we use the notations
$$
|U|:=\sqrt{u_1^2 + \cdots + u_k^2},\quad \nabla U = (\nabla u_1,\dots,\nabla u_k)\in (\R^d)^k,\quad |\nabla U|^2 = \sum_{i=1}^k |\nabla u_i|^2
$$
and
$$
\O_U := \{|U|>0\} = \bigcup_{i=1}^k\{u_i\neq 0\}.
$$ 
Consider the function $\Phi(x,p):=|p|^2 - \Lambda$ in \eqref{e:vectorial-bernoulli-general} with $\Lambda>0$. With this choice of $\Phi$, the system \eqref{e:vectorial-bernoulli-general} corresponds to the Euler-Lagrange system associated to the following vectorial version of the one-phase Alt-Caffarelli's functional
\be\label{e:vectorial-bernoulli-functional}
\mathcal J(V,D) :=\int_D|\nabla V|^2\,dx+\Lambda|\{|V|>0\}\cap D|,
\ee
defined for vector-valued functions $V\in H^1(D;\R^k)$. 

Let $U:=(u_1,\dots,u_k)$ be a minimizer of \eqref{e:vectorial-bernoulli-functional} among all functions $V\in H^1_0(D;\R^k)$ with prescribed boundary values $V=G$ on $\partial D$.
By performing outer and inner variations, one can show that $U$ solves the following \it vectorial Bernoulli problem \rm 
\be\label{e:vectorial-bernoulli}
\Delta U = 0 \quad\mbox{in }\O_U,\qquad 
\mbox{and}\qquad 
|\nabla U|^2={\Lambda}\quad\text{on }D\cap \partial \O_U\,.
\ee

\subsubsection{History and motivation} The variational vectorial problem \eqref{e:vectorial-bernoulli} was introduced in the papers \cite{CaffarelliShahgholianYeressian2018:vectorial, MazzoleniTerraciniVelichkov:GAFA, KriventsovLin2018:nondegenerate} which appeared simultaneously in 2016. In \cite{CaffarelliShahgholianYeressian2018:vectorial} the functional \eqref{e:vectorial-bernoulli-functional} is motivated by a thermal insulation model, in which the heat loss from the insulating layer is prescribed through a vector-valued boundary datum $G=(g_1,\dots,g_k)$. Each component $g_i$ is interpreted as an independent source of temperature that gives rise to a potential $u_i\colon D \to \R$.
The cost functional is the sum of the Dirichlet energy of $U := (u_1, \dots, u_k)$ and the volume of the heated region $\{|U| > 0\}$. Different distributions of the source $G$ lead to different optimal configurations of $U$: for instance, if the functions $g_i$ are small and with distant supports, it is reasonable to expect the system to behave like a collection of independent scalar problems; conversely, if the components $g_i$ are large and if their supports overlap, then it becomes energetically more favorable for the functions $u_i$ to share a common support.
The focus of \cite{CaffarelliShahgholianYeressian2018:vectorial} is on the case in which all components of the boundary datum $G$ and of the minimizer $U$ are non-negative, i.e.
\be\label{e:ass.CSY}
g_i\geq 0,\, u_i\geq 0 \quad\mbox{a.e. in }D,\mbox{ for every }i=1,\dots,k.
\ee
In \cite{KriventsovLin2018:nondegenerate} and \cite{MazzoleniTerraciniVelichkov:GAFA} the free boundary regularity of the minimizers of the functional \eqref{e:vectorial-bernoulli-functional} was studied in connection with the following variational problem
\be\label{e:spectral-opt-prob}
\mathrm{min}\left\{F(\lambda_1(A),\dots,\lambda_k(A)): A\subset \R^d \mbox{ quasi-open},\, |A|=c\right\}
\ee
in which the variables are the open subsets of $\R^d$. Here the constant $c>0$ is given, the function  $F:=F(t_1,\dots,t_k)$ is non-negative and $C^1$, and $\lambda_i(A)$ is the $i$-th eigenvalue of the Dirichlet Laplacian on $A$. In the simplest case 
    \be\label{e:somma-intro}
    F(A):=\lambda_1(A) + \cdots + \lambda_k(A),
    \ee 
   the regularity of the optimal sets was studied in \cite{MazzoleniTerraciniVelichkov:GAFA}, while \cite{KriventsovLin2018:nondegenerate} was dedicated to the more general case of  functionals $F$ satisfying the following {non-degeneracy condition}: 
    $$\partial_{\lambda_i}F\geq \mu>0\qquad\text{for every}\qquad i=1,\dots,k.$$
    In both cases (\cite{KriventsovLin2018:nondegenerate} and \cite{MazzoleniTerraciniVelichkov:GAFA}) the problem of finding a minimizer for \eqref{e:spectral-opt-prob} among the open sets $A$ can be written as a variational problem for the eigenfunctions $(v_1,\dots,v_k)$ on $A$. For instance, in the case of the functional \eqref{e:somma-intro} the free boundary system satisfied by a minimizer $U=(u_1,\dots,u_k)$  is the following:  
    \be\label{e:vectorial-bernoulli-sum-lambda-k-equation}
-\Delta u_j = \lambda_j(\O)u_j \quad\mbox{in }\O,\qquad 
\mbox{and}\qquad 
|\nabla U|^2=\frac{2}{d}\sum_{i=1}^k\lambda_i(\O)\quad\text{on }D\cap \partial \O\,.
\ee
A key feature in the papers \cite{CaffarelliShahgholianYeressian2018:vectorial,KriventsovLin2018:nondegenerate,MazzoleniTerraciniVelichkov:GAFA} is the existence of at least one components of $U$, which is non-negative; in \cite{KriventsovLin2018:nondegenerate} and \cite{MazzoleniTerraciniVelichkov:GAFA}, for instance, we have 
$$\{u_1>0\}=\{|U|>0\}=\Omega_U.$$
Roughly speaking, this allows to re-write the problem as a one-phase free boundary problem for $u_1$ and to show that $\partial\Omega_U$ is regular, up to a closed singular set of Hausdorff dimension at most $d-5$, by exploiting the regularity theory for the one-phase problem.

\subsubsection{The degenerate case} Let $U=(u_1,\dots,u_k):D\to\R^k$ be a minimizer of the vectorial problem \eqref{e:vectorial-bernoulli-functional} with boundary datum $G=(g_1,\dots,g_k):\partial D\to\R^k$ and suppose that we are in the so-called {\it degenerate case} in which all the components of $G$ change sign on $\partial D$. Then, all the components of the solution $U$ have to change sign in $D$, so the theory from \cite{CaffarelliShahgholianYeressian2018:vectorial, KriventsovLin2018:nondegenerate,MazzoleniTerraciniVelichkov:GAFA} does not apply. We notice that this is not a mere technical issue. In fact, in the degenerate case, the free boundary $\partial\Omega_U\cap D$ can develop cusps (boundary points at which the Lebesgue density of $\R^d\setminus\Omega_U$ is zero) even in dimension $d=2$. Moreover, in any dimension $d\ge 2$, the set of the cusp-like singularities can reach dimension $d-1$, which is the same dimension that the regular part has. \medskip

\noindent{\it Degenerate solutions and shape optimization problems.} Degenerate vectorial problems arise from shape optimization problems as \eqref{e:spectral-opt-prob}, in which the functional depends only on higher eigenvalues. Also in this case cusp-like singularities might appear on the boundaries of the optimal sets. Following \cite{KriventsovLin2019:degenerate}, we call a functional $F(\lambda_1,\dots,\lambda_k)$ {\it degenerate} if the function $F$ is increasing in each variable, but its partial derivatives are not bounded from below by positive constants:
$$\partial_{\lambda_i}F\ge0\qquad\text{for every}\qquad i=1,\dots,k.$$
A typical example of a degenerate functional is 
\begin{equation}\label{e:k-intro}
F(\lambda_1,\dots,\lambda_k)=\lambda_k.
\end{equation}
Kriventsov and Lin showed in \cite{KriventsovLin2019:degenerate} that if $\Omega$ is an optimal set for $\lambda_k$, then there exists a family of indices $\mathcal{I}\subset\{1,\dots,k\}$ and of normalized eigenfunctions $(u_i)_{i\in \mathcal{I}}$ such that  
$$
\sum_{i\in\mathcal{I}} \xi_i |\nabla u_i|^2 = \frac{2}{d}\lambda_k(\O)\qquad\mbox{on}\quad\partial \O \cap D,
$$
where $(\xi_i)_{i\in \mathcal{I}}$ are positive coefficients. Since the set $\mathcal I$ might not contain the index $1$, the eigenfunctions $\{u_i\}_{i\in\mathcal I}$ might change sign on $\Omega$, so the vector of eigenfunctions $(u_i)_{i\in\mathcal I}$ is a (viscosity) solution of a degenerate vectorial problem. Finally, we notice that the degenerate vectorial problems and the shape optimization problems for degenerate functionals $F$ share the same technical difficulties and the free boundaries of their solutions have similar structure.\medskip

\noindent{\it Epiperimetric inequality and regularity of the free boundaries in 2D.} The free boundary regularity for solutions to the degenerate vectorial problem was first approached in \cite{SpolaorVelichkov:CPAM} (which appeared in 2016) via an epiperimetric inequality in dimension $d=2$. This epiperimetric inequality allowed to prove that, for $d=2$ and for any $k\ge 1$, the free boundary $\partial\Omega_U\cap D$ can be decomposed as a regular part $Reg$ and a singular part $Sing$ such that: 
\begin{itemize}
\item $Reg$ consists only of boundary points at which the Lebesgue density of $\Omega_U$ is $1/2$; $Reg$ is relatively open subset of $\partial\Omega_U$ and is locally the boundary of a smooth (analytic) set; \smallskip
\item the singular part  $Sing$ consists only of boundary points at which the Lebesgue density of $\Omega_U$ is $1$; moreover, $Sing$ is  locally contained in $C^{1,\alpha}$ regular curves.\medskip
\end{itemize}

\noindent{\it Epsilon-regularity results in higher dimension.} The smoothness of the flat free boundaries in the degenerate case, in general dimension $d\ge 2$ and for solutions of the shape optimization problem \eqref{e:spectral-opt-prob}, was first obtained in 2017 by Kriventsov and Lin in \cite{KriventsovLin2019:degenerate} by an argument that also applies to the  degenerate vectorial problem. \medskip 

\noindent Another important tool for the understanding of the regularity of the flat free boundaries in the degenerate case was introduced in \cite{MazzoleniTerraciniVelichkov:GAFA}, where it was shown that the free boundary condition for the gradient of the vector $U$ on $\partial\Omega_U$ can be written in terms of the scalar function $|U|$. Precisely,
$$|\nabla |U||=\sqrt\Lambda\quad\text{on}\quad\partial\Omega_U\cap D.$$
By exploiting this viscosity formulation of the free boundary condition, two different proofs of the smoothness of $Reg$ were obtained in \cite{MazzoleniTerraciniVelichkov:AnalPDE} and \cite{DeSilvaTortone2020:ViscousVectorialBernoulli} for the degenerate vectorial problem (in any dimension $d\ge 2$). The proof in \cite{MazzoleniTerraciniVelichkov:AnalPDE} exploits the properties of NTA domains and the De Silva's regularity result \cite{DeSilva:FreeBdRegularityOnePhase} for the one-phase problem, while in \cite{DeSilvaTortone2020:ViscousVectorialBernoulli} the regularity of the flat free boundaries was obtained via an improvement of flatness argument based on a Partial Harnack inequality for viscosity solutions; this approach was later applied to the thin vectorial problem in  \cite{DeSilvaTortone:thin}.
\medskip

\noindent{\it The singular set in dimension $d\ge 3$.} Finally, we notice that for general degenerate (vectorial and shape optimization) problems the structure of the singular set in dimension $d>2$ is currently an open problem.

\subsection{Non-variational Bernoulli free boundary systems}\label{s:subs-system}
The vectorial (nondegenerate and degenerate) Bernoulli problem \eqref{e:vectorial-bernoulli} discussed in the previous \cref{sub:intro:vectorial} are both of variational nature, that is, there is a functional (in our case \eqref{e:vectorial-bernoulli-functional}) defined on the class of vector valued functions $U:\R^d\to\R^k$ whose minimizers are solutions to \eqref{e:vectorial-bernoulli}. The same is also true for other free boundary systems. For instance, let $D$ be a bounded open set in $\R^d$, $k\ge 1$ and let $A_{ij}$ be a family of symmetric $d\times d$ matrices with $A_{ij}=A_{ji}$. Then, solutions $U=(u_1,\dots,u_k):D\to\R^k$ of the free boundary system
\begin{equation*}
\begin{cases}
\text{div}\Big(\sum_{i=1}^kA_{ij}\nabla u_i\Big)=0&\quad\text{in}\quad\Omega:=\{|U|>0\}\quad\text{for every}\quad j=1,\dots,k,\\
\sum_{i=1}^k\sum_{i=1}^k\nabla u_j\cdot A_{ij}\nabla u_i=1&\quad\text{on}\quad\partial\Omega\cap D,
\end{cases}
\end{equation*}
can be obtained by minimizing the functional 
\begin{equation*}
\int_{D}\Bigg(\sum_{i=1}^k\sum_{j=1}^k\nabla u_i\cdot A_{ij}\nabla u_j\Bigg)\,dx+|\{|U|>0\}\cap D\}|.
\end{equation*}
Notice that if the free boundary problem admits a variational formulations, then the boundary condition on $\partial\Omega\cap D$ and the system solved in $\Omega$ are obtained from the same expression, so they cannot be assigned independently. In particular, the free boundary condition on $\partial\Omega$ is elliptic if and only if the functional is elliptic. This suggests that solutions to free boundary problems with non-elliptic free boundary condition cannot be obtained my minimizing a functional as the functional itself will not be elliptic. 

A simple example of a non-variational free boundary system is the following one, defined for vector valued functions $U=(u,v):D\to\R^2$ in a fixed bounded open set $D\subset\R^d$: 
\begin{equation}\label{e:free-boundary-system-intro-0}
\begin{cases}
\begin{array}{rl}
\Delta u=0&\quad\text{in}\quad\Omega:=\{|U|>0\},\\
\Delta v=0&\quad\text{in}\quad\Omega:=\{|U|>0\},\\
\nabla u\cdot \nabla v=1&\quad\text{on}\quad\partial\Omega\cap D.
\end{array}
\end{cases}
\end{equation}
Systems of this form arise in optimal control and inverse problems in which the aim is to detect an unknown object (the complementary of the domain $D\setminus\Omega$) by analyzing the trace and the gradient on the fixed boundary $\partial D$ of the solution of an overdetermined PDE problem in $D\setminus\Omega$ (see for instance \cite{AguileraCaffarelliSpruck,BruggerHarbrechtTausch:NumericalSolution,ButtazzoMaialeVelichkov:Lincei,ButtazzoVelichkov}). A corresponding variational shape optimization problem can be formulated as follows: given a domain $D\subset\R^d$, a force term $f:D\to\R$, and a cost function $j\colon \R \times D \to \R$, minimize the  functional
\be\label{e:J}
J(A):=\int_A j(u_A,x)\,dx\,,
\ee
among all open sets $A\subset D$, where the state variable $u_A$ is the unique solution of the PDE
$$
-\Delta u_A = f \quad \mbox{in }A\ ,\qquad u=0\quad \mbox{on }\partial A\,.
$$
Even if the above problem is formulated only in terms of the scalar function $u_A$, it is naturally associated to a free boundary system for the couple $(u_A,v_A)$,
where $v_{A}\in H^1_0(A)$ is the so-called {\it adjoint variable} obtained by solving
$$
\Delta v_{A} = \frac{\partial j}{\partial u}(u_{A}(x),x)\quad\mbox{in } {A}\ ,\qquad v_{A}=0\quad\mbox{on } \partial {A}.
$$

\begin{minipage}{1.5\linewidth}
   \hspace{-0.023\linewidth}
\begin{minipage}{0.38\linewidth}
\vspace{0.1cm}
Indeed, if $\Omega$ minimizes \eqref{e:J} among all sets of given measure, by formally computing the domain variations of the functional $J$, it is possible to derive the validity of the free boundary condition
\be\label{e:free-boundary-fg}
|\nabla u_{\O}||\nabla v_{\O}| = \Lambda + j(0,x) \quad\mbox{on }\partial{\O}\cap D,
\ee
where $\Lambda$ is a positive constant (see for instance \cite{BruggerHarbrechtTausch:NumericalSolution}).\medskip

A free boundary condition of the type \eqref{e:free-boundary-fg} was first studied by Aguilera, Caffarelli and Spruck in \cite{AguileraCaffarelliSpruck} for an optimization problem with heat conduction.\vspace{0.1cm}\\
  \end{minipage}
\hspace{0.01\linewidth}
 \begin{minipage}{0.3\linewidth}
 \vspace{-0.1cm}
 \begin{tikzpicture}[scale=2.0]
 \def\a{-0.53}
 \def\b{0.52}
 \def\p{0.55}
 \def\q{1.1}
  \begin{scope}
  \clip (0,0) ellipse (1cm and 0.3cm);
\filldraw[red2!55, domain=-0.8:0.78]  (0,-1) -- plot (\x^2,0.55*\x ) -- (1.5,0) -- (0,-1) -- cycle;
 \end{scope}
    \draw[black] (-0.72,-0.1) node {$\Omega^*$};
       \draw[black,thick] (1,0) arc (0:180:1cm and 0.3cm);
  \draw[black,thick] (-1,0) arc (180:360:1cm and 0.3cm);
   \fill [white] plot [smooth] coordinates {(\a^2,0.55*\a) (-0.75,0.3*0.25)  (-1,\p) (-0.5,0.75*\p) (-0.25,0.8*\p) (0,0.35) (\b^2,0.55*\b) (0.27^2,0.55*0.3) (0,0) (-0.29^2,-0.55*0.3) (-0.5^2,-0.55*0.5) (\a^2,0.55*\a)};
\draw [black,thick] plot [smooth] coordinates {(\a^2,0.55*\a) (-0.75,0.3*0.25)  (-1,\p) (-0.5,0.75*\p) (-0.25,0.8*\p) (0,0.35) (\b^2,0.55*\b)};
   \fill [white] plot [smooth] coordinates {(\a^2,0.55*\a) (-0.75,0.3)  (-1,1) (-0.5,0.75) (-0.25,0.7) (0,0.55) (\b^2,0.55*\b) (0.5^2,0.55*0.5)
    (0.27^2,0.55*0.3) (0,0) (-0.29^2,-0.55*0.3) (-0.5^2,-0.55*0.5) (\a^2,0.55*\a) };
   \draw [black,thick] plot [smooth] coordinates {(\a^2,0.55*\a) (-0.75,0.3)  (-1,1) (-0.5,0.75) (-0.25,0.7) (0,0.55) (\b^2,0.55*\b) };
      \draw[color=red2!55!gray, very thick, domain=-0.8:0.78] plot (\x^2,0.55*\x ) node[anchor  =south ]{$\,\quad\partial \O\cap D$};
          \draw[black,thick,dashed] (1,0) arc (0:180:1cm and 0.3cm);
   \draw [black,thick,dashed] plot [smooth] coordinates {(\a^2,0.55*\a) (-0.75,0.3*0.25)  (-1,\p) (-0.5,0.75*\p) (-0.25,0.8*\p) (0,0.35) (\b^2,0.55*\b)};

   \draw (-1,1) node[anchor=east] {$u_{\O}$};
\draw (-1,0.6) node[anchor=east] {$v_{\O}$};
   \draw[color=red2!65!gray] (-0.58,-0.54) node {$|\nabla u_{\O}||\nabla v_{\O}|= \Lambda+j(0,x)$};
   \end{tikzpicture}
\end{minipage}
\end{minipage}
In the framework of \cite{AguileraCaffarelliSpruck}, $u_A$ is a temperature distribution in the domain $D$, which is harmonic along the insulating material $A\subset D$ and vanishes in $D\setminus A$; the cost functional $J$ is the heat flow through $\partial D$. Thus, if for $A\subset D$ we consider the state variables 
$$
\begin{cases}
\Delta u_A = 0 &\mbox{in }A,\\
u_A=\varphi &\mbox{on }\partial D,\\
u_A=0 &\mbox{on }\partial A \cap D,
\end{cases}\qquad
\begin{cases}
\Delta v_A = 0 &\mbox{in }A,\\
v_A =\phi &\mbox{on }\partial D,\\
v_A=0 &\mbox{on }\partial A \cap D,
\end{cases}
$$
we can rewrite the cost functional as
$$
J(A):=\int_{\partial D} \phi\,\nabla u_A\cdot \nu_{\partial D}\,d\mathcal{H}^{d-1} = \int_D \nabla u_A\cdot \nabla v_A\,dx.
$$
In \cite{AguileraCaffarelliSpruck} it was proved that the boundary of the optimal set is smooth up to a closed set of zero $(d-1)$-Hausdorff measure and that on the regular part of the free boundary a condition of the form \eqref{e:free-boundary-fg} is satisfied. \medskip

Recently, in the series of papers in collaboration \cite{MaialeTortoneVelichkov:RMI,MaialeTortoneVelichkov:AnnSNS,ButtazzoMaialeMazzoleniTortoneVelichkov:ARMA} we developed a program for the study of the regularity of the free boundary of optimal sets for integral type functionals of the form \eqref{e:J}. First, in \cite{MaialeTortoneVelichkov:RMI}, we proved an epsilon-regularity theorem for continuous functions satisfying  \eqref{e:free-boundary-system-intro-0} in viscosity sense. Then, in \cite{MaialeTortoneVelichkov:AnnSNS}, we proved a general Boundary Harnack Principle for domains satisfying a list of conditions, which are typically fulfilled by optimal sets and solutions to vectorial free boundary problems.  Finally,  we completed the program in \cite{ButtazzoMaialeMazzoleniTortoneVelichkov:ARMA} for a class of model shape optimization problems with cost functional of the form 
$$j(u,x):=-g(x)u.$$
Precisely, in \cite{ButtazzoMaialeMazzoleniTortoneVelichkov:ARMA}, we proved that the optimal sets satisfy the conditions from \cite{MaialeTortoneVelichkov:AnnSNS} and we used this information to show that the state functions $u_\Omega$ and $v_\Omega$, and the optimal set $\Omega$, are viscosity solutions to \eqref{e:free-boundary-system-intro-0}. Again in \cite{ButtazzoMaialeMazzoleniTortoneVelichkov:ARMA} we introduced the notion of stale solutions to the one-phase Bernoulli problem, which we used to show that the singular set on $\partial\Omega$ is of Hausdorff dimension $d-5$. Finally, in the paper \cite{MazzoleniTortoneVelichkov:JConvexAnal} in collaboration with Mazzoleni, we applied the strategy developed in \cite{ButtazzoMaialeMazzoleniTortoneVelichkov:ARMA} to improve the Hausdorff dimension estimate from  \cite{AguileraCaffarelliSpruck}.

\subsection{Structure of the survey} In \cref{s:eigenvalues}, we describe the main results achieved in the context of vector-valued problems involving Dirichlet eigenvalues (see \cite{KriventsovLin2018:nondegenerate,MazzoleniTerraciniVelichkov:GAFA,KriventsovLin2019:degenerate,Tortone:shapefractional}), with particular attention to the differences compared to their scalar counterparts.
Then, in \cref{s:vectorial-bernoulli} we considered the minimization problem associated to \eqref{e:vectorial-bernoulli-functional} and we illustrate the various methodologies introduced for the study of $\eps$-regularity in \cite{CaffarelliShahgholianYeressian2018:vectorial,MazzoleniTerraciniVelichkov:AnalPDE,DeSilvaTortone2020:ViscousVectorialBernoulli}.
In \cref{s:vectorial-fg} we discuss the strategy developed in \cite{MaialeTortoneVelichkov:RMI,MaialeTortoneVelichkov:AnnSNS,ButtazzoMaialeMazzoleniTortoneVelichkov:ARMA}; we describe the main feature of the epsilon-regularity theorem for \eqref{e:free-boundary-system-intro-0} and we 
explain the principles behind the study of singular points for stable one-phase solutions. We also include some open problems.

\section{Optimal domains for functionals involving Dirichlet eigenvalues}\label{s:eigenvalues}
A shape optimization problem is a variational problem of the form
\be\label{e:general.shape}
\min\big\{J(A)\ :\ A\in\mathcal{A}\big\},
\ee
where $\mathcal{A}$ is an admissible class of sets, and where $ J:\mathcal{A}\to\R$ is a given functional on $\mathcal A$ usually depending on the solution of a PDE on the domains $A\in \mathcal A$. Such minimization problems arise in different models in Biology, Engineering and Physics (see for instance \cite{BucurButtazzoBook,HenrotBook,HP18,Velichkov:BookShapeOptimization} for an overview) and have been extensively studied from both numerical and theoretical points of view.\\

A special class of shape functionals is the one of so-called {\it spectral functionals}, that is, functionals depending on the eigenvalues of the Dirichlet Laplacian as
$$J(A)=F\big(\lambda_1(A),\dots,\lambda_k(A)\big),$$
where $F:\R^{k}\to\R$ is a real-valued function and $\lambda_i(A)$ denotes the $i$-th eigenvalue of the Dirichlet Laplacian on the set $A$ counted with the due multiplicity. For instance, given a set $D\subseteq \R^d$ and constants $\Lambda,c>0$, two classical shape optimization problems are the following:
\begin{enumerate}
    \item[(i)] the penalized problem
    \be\label{e:penalized}\min\Big\{F\big(\lambda_1(A),\dots,\lambda_k(A)\big)+\Lambda|A|\ :\ A\subset D\mbox{ open}\Big\},\ee
    \item[(ii)] the measure-constrained problem
    \be\label{e:constrained}\min\Big\{F\big(\lambda_1(A),\dots,\lambda_k(A)\big)\ :\ A\subset D \mbox{ open},\,|A|=c\Big\},\ee
\end{enumerate}
where in both cases the cost function is defined through a reasonable\footnote{For general functions $F$ the existence of a solution in the class of quasi-open sets was first proved by Buttazzo and Dal Maso in \cite{buttazzodalmaso} for $F$ non-decreasing in each variable and lower semi-continuous, under the assumption that the candidate sets are all contained in a bounded open set $D\subset \R^d$. This last assumption was later removed by Bucur in \cite{bucur-kth} and Mazzoleni and Pratelli in \cite{mazzolenipratelli}.} function $F\colon \R^k \to \R$. 
The first results on the characterization of the optimal shapes, for the first and the second Dirichlet eigenvalues, go back to the works of Faber-Krahn (1922) and Krahn-Szeg\"o (1923) where it was shown that the only minimizers in $\R^d$ are balls and unions of disjoint balls whose radius depends on $c$ and $\Lambda$. It is today known, mainly thanks to numerical experiments, that in general the minimizers have more complicated geometry (see for instance \cite{OudetNumerical}). 
One way to approach this problem, and to study the interplay between the geometry of the domains and the spectrum of their Dirichlet Laplacian, is by analyzing the local structure of the optimal sets for the above classes of problems. Today the regularity of the optimal sets is well-understood in the case of the functionals $F(\lambda_1,\dots,\lambda_k)=\lambda_1$ and $F(\lambda_1,\dots,\lambda_k)=\lambda_2$ (in the case $D\subset\R^d$ since otherwise the problem is trivial). The regularity of the optimal sets for the first eigenvalue $\lambda_1$ in a box $D\subset\R^d$ was studied in \cite{BrianconLamboley:RegularityShapeFirstEigenLaplacian} and \cite{RussTreyVelichkov:CalcVarPDE}, while the regularity of the optimal sets the second eigenvalue $\lambda_2$ in a box $D\subset\R^d$ was obtained recently in \cite{MazzoleniTreyVelichkov:AnnIHP}.\medskip

Recently, several authors have investigated the regularity of optimal shapes for cost functionals involving higher-order eigenvalues, where the nontrivial interactions between distinct eigenfunctions give rise to complex and intriguing phenomena. In order to make the discussion self-contained, we recall here the definition of \emph{non-degenerate} and \emph{degenerate} functional from \cite{KriventsovLin2018:nondegenerate,KriventsovLin2019:degenerate}.

\begin{definition}\label{d:non-deg-deg}
Let $F\colon \R^k \to [0,+\infty)$ be a Lipschitz function non-decreasing in each variable. Then: 
\begin{enumerate}
    \item $F$ is said to be \emph{non-degenerate} if either $F$ is locally by-Lipschitz or $F\in C^1$ and there exists $\mu>0$ such that $\partial_{\lambda_i}F\geq \mu >0$ for every $i=1,\dots,k$, a model problem being the minimization of the sum of the first $k$ Dirichlet eigenvalues (the case $k=1$ studied in \cite{BrianconLamboley:RegularityShapeFirstEigenLaplacian,RussTreyVelichkov:CalcVarPDE} clearly falls in this class);\medskip
    \item $F$ is said to be \emph{degenerate} if $F$ is locally $C^1$ or it is strictly increasing in each variable. 
  Here, the model problem is the one associated to the minimization of $k$-th Dirichlet eigenvalue, with $k>1$ (also the case $k=2$ studied in \cite{MazzoleniTreyVelichkov:AnnIHP} falls in this class).
\end{enumerate}
\end{definition}
For a detailed analysis of the role played by each assumption in the degenerate setting, we refer the reader to \cite[Section 2]{KriventsovLin2019:degenerate}.

\subsection{Qualitative properties of eigenfunctions and optimal domains} Before presenting the main result, it is important to introduce the notions of shape subsolution (inward optimality) and shape supersolution (outward optimality). As already observed by Bucur in \cite{bucur-kth}, several qualitative properties of the optimal shapes and of the associated state functions can be derived by simply employing these two concepts. Given a class of measurable sets $\mathcal A$, a functional $ J:\mathcal A\to\R$, and a set $\Omega\in\mathcal A$, we say that:
\begin{enumerate}
    \item[(i)] $\Omega$ is {\it a shape subsolution} (or {\it inwards minimizing}) if
    $$
    J(\O)\leq J(A) \quad \mbox{for every }A\in \mathcal{A},\,A\subseteq \O;
    $$
    \item[(ii)] $\Omega$ is {\it a shape supersolution} (or {\it outwards minimizing}) if 
    $$
    J(\O)\leq J(A) \quad \mbox{for every }A\in \mathcal{A},\,\O\subseteq A.
    $$
\end{enumerate}
In general, the inwards minimizing property implies that the sets are bounded and have finite perimeter, while the outwards minimizing property allows to prove that the state functions (for instance, the eigenfunctions) are Lipschitz continuous.
\begin{theorem}\label{t:subsolution}
Let $F\colon \R^k \to [0,+\infty)$ be a Lipschitz function non-decreasing in each variable. Then, every shape subsolution $\O$ of \eqref{e:penalized} and \eqref{e:constrained} is a bounded set with finite perimeter. Moreover, there exist two dimensional constants $\eps_d,r_d>0$ such that 
\be\label{e:interior-density}
|\O\cap B_r(x_0)|\geq \eps_d|B_r|\quad\mbox{for every }x_0 \in \partial\O, r\leq r_d.
\ee
\end{theorem}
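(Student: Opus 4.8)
\emph{Reduction and the basic inequality.} The plan is to deduce everything from the inward-minimality inequality together with the boundedness of the state functions. First I would reduce the measure-constrained problem \eqref{e:constrained} to the penalized one \eqref{e:penalized}: a standard argument (perturbing $\O$ inward and restoring the volume) shows that a constrained minimizer is an inward minimizer of \eqref{e:penalized} for a suitable multiplier $\Lambda>0$, so it suffices to treat shape subsolutions of \eqref{e:penalized}. For such an $\O$, using the domain monotonicity of the Dirichlet eigenvalues ($\lambda_i(A)\ge\lambda_i(\O)$ when $A\subseteq\O$) together with the fact that $F$ is non-decreasing and $L$-Lipschitz, the choice $A=\O\setminus\overline{B_r(x_0)}$ yields the workhorse estimate
\be
\Lambda\,|\O\cap B_r(x_0)|\ \le\ F\big(\lambda(\O\setminus\overline{B_r(x_0)})\big)-F\big(\lambda(\O)\big)\ \le\ L\sum_{i=1}^k\big(\lambda_i(\O\setminus\overline{B_r(x_0)})-\lambda_i(\O)\big).
\ee
Throughout I would use that the $L^2$-normalized eigenfunctions $U=(u_1,\dots,u_k)$ are uniformly bounded, $\|u_i\|_{L^\infty}\le M$, which follows from the equation $-\Delta u_i=\lambda_i u_i$ by Moser iteration and requires no regularity of $\partial\O$.

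\emph{Boundedness and finite perimeter.} For the boundedness I would apply inward minimality to the exterior truncations $A=\O\cap B_R$: testing the Rayleigh quotient of $\lambda_i(\O\cap B_R)$ with $\eta_R u_i$ for a cut-off $\eta_R$ supported in $B_R$ and using the identity $\int|\nabla(\eta v)|^2=\int\eta^2|\nabla v|^2-\int v^2\,\eta\,\Delta\eta$, one bounds the spectral increment by the tail energy $\sum_i\int_{\O\setminus B_{R-1}}(|\nabla u_i|^2+u_i^2)$. Inserted into the minimality inequality, this produces a closed inequality for the (decreasing) tail volume which forces it to vanish for large $R$; hence $\O$ is bounded. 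Finite perimeter then follows by comparing $\O$ with the super-level truncations $\{w_\O>t\}$ of an auxiliary torsion function $w_\O$ (solving $-\Delta w_\O=1$ in $\O$, $w_\O\in H^1_0(\O)$) and integrating in $t$ via the coarea formula, which controls $\mathrm{Per}(\O)$ by a finite energy of $w_\O$; this is the classical route for inward minimizers.

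\emph{The lower density estimate.} This is the core of the statement, and the right mechanism is a \emph{nondegeneracy} of the state functions rather than a single ball-removal. Conceptually, inward minimality turns the free boundary condition into a one-sided inequality: removing a sliver of volume $\delta V$ near a boundary point changes the energy by $\sum_i\partial_{\lambda_i}F\,|\partial_\nu u_i|^2\,\delta V-\Lambda\,\delta V\ge0$, so that, since $0\le\partial_{\lambda_i}F\le L$, one expects
\be
|\nabla U|^2=\sum_{i=1}^k|\nabla u_i|^2\ \ge\ \frac{\Lambda}{L}\qquad\text{on }\partial\O\cap D
\ee
in a suitable weak sense. This is a genuine nondegeneracy, and it propagates to the torsion function $w_\O$. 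To convert it into the volume bound \eqref{e:interior-density} I would use the elementary inequality $|\O\cap B_r(x_0)|\ge \int_{B_r(x_0)}w_\O\,/\,\|w_\O\|_{L^\infty(B_r(x_0))}$ (valid because $\{w_\O>0\}=\O$): the density then follows from a lower bound on the numerator (nondegeneracy of $w_\O$) together with the growth control $\|w_\O\|_{L^\infty(B_r(x_0))}\le C\,r$ on the denominator, both of which are consequences of inward minimality and the state equation.

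\emph{Main obstacle.} The delicate point, and the technical heart of the subsolution theory, is to make the nondegeneracy rigorous from inward minimality alone. The difficulty is that the crude perturbation bound $\lambda_i(\O\setminus B_r)-\lambda_i(\O)\le C\,r^{-2}\int_{B_{2r}}u_i^2$ is too lossy near $\partial\O$, where the eigenfunctions are small; it must be upgraded using the uniform bound $\|u_i\|_{L^\infty}\le M$ and a comparison that quantifies how the smallness of $|U|$ in a ball either produces linear growth at some interior point or allows a strictly profitable inward perturbation, contradicting minimality. This must be carried out while the several eigenfunctions interact and $F$ is only Lipschitz and monotone, so that no single $\partial_{\lambda_i}F$ can be bounded below: it is precisely the uniform bound $\partial_{\lambda_i}F\le L$ that lets one pass from the weighted estimate $\sum_i\partial_{\lambda_i}F\,|\nabla u_i|^2\ge\Lambda$ to the unweighted nondegeneracy $|\nabla U|^2\ge\Lambda/L$, and hence obtain \eqref{e:interior-density} with constants uniform over the whole class of Lipschitz non-decreasing functionals $F$.
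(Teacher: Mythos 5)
Your outline is built from the right circle of ideas -- the reduction of \eqref{e:constrained} to \eqref{e:penalized}, the workhorse inequality $\Lambda|\O\cap B_r(x_0)|\le F(\lambda(\O\setminus\overline{B_r}))-F(\lambda(\O))\le L\sum_i(\lambda_i(\O\setminus\overline{B_r})-\lambda_i(\O))$, boundedness via exterior truncations, and finite perimeter via the level sets of the torsion function are all sound and are essentially what happens inside the references. But you are missing the one step on which the paper's entire proof pivots: showing that an eigenvalue shape subsolution is in fact a shape subsolution \emph{for the torsion functional} $A\mapsto E(w_A)+\Lambda'|A|$ (this is \cite[Theorem 3.1]{bucur-kth} in the non-degenerate case and \cite[Lemma 4.2]{KriventsovLin2019:degenerate} in the degenerate case, where the two cases genuinely require different arguments). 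Once that reduction is in place, boundedness, finite perimeter and the density estimate are all off-the-shelf (\cite[Theorem 2.2]{bucur-kth} and \cite[Proposition 3.11]{BucurVelichkov-multiphase}). You invoke the torsion function $w_\O$ for both the perimeter and the density arguments, but you never establish that $\O$ is inward minimizing for the torsion energy -- and that is precisely the nontrivial content, since the eigenvalue increments $\lambda_i(A)-\lambda_i(\O)$ must be controlled by the torsion increment uniformly over the whole class of Lipschitz monotone $F$.

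The genuine gap is in your density argument. First, the inequality $\sum_i\partial_{\lambda_i}F\,|\partial_\nu u_i|^2\ge\Lambda$ presupposes both differentiability of $F$ (only Lipschitz is assumed) and enough boundary regularity to speak of $\partial_\nu u_i$; you flag this yourself but do not close it, and the actual proofs never pass through a pointwise free boundary condition. Second, and more fatally, your mechanism $|\O\cap B_r(x_0)|\ge \int_{B_r}w_\O\,/\,\|w_\O\|_{L^\infty(B_r)}$ needs the upper growth bound $\|w_\O\|_{L^\infty(B_r(x_0))}\le Cr$, which you claim follows from inward minimality. It does not: linear growth of $w_\O$ from the free boundary is an \emph{outward}-minimality (supersolution/Lipschitz) property. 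A domain with a codimension-one crack is inward minimizing for small $\Lambda$ (the gradient of $w_\O$ blows up at the crack edge, so shrinking is always costly), yet $w_\O$ grows like $\sqrt{r}$ there; the density estimate still holds for such sets, but your route to it collapses. The correct argument extracts \eqref{e:interior-density} from the subsolution inequality $\Lambda|\O\cap B_r|\le E(w_{\O\setminus\overline{B_r}})-E(w_\O)=\frac12\int(w_\O-w_{\O\setminus\overline{B_r}})$ together with a non-degeneracy lemma for $w_\O$ (if $\|w_\O\|_{L^\infty(B_r(x_0))}\le cr$ then $w_\O\equiv0$ on $B_{r/2}(x_0)$), and never uses the linear upper bound; this is the content of \cite[Proposition 3.11]{BucurVelichkov-multiphase}, which is why the paper reduces to the torsion subsolution rather than arguing as you propose.
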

\begin{proof}
Let us split the discussion in two cases:
\begin{enumerate}
    \item[(i)] {\it $F$ is non-degenerate.} 
By the local bi-Lipschitz continuity of $F$, one can deduce that every subsolution $\O$ is in fact a shape subsolution for the functional $A\mapsto \lambda_k(A)+\Lambda'|A|$, for some constant $\Lambda'>0$, among quasi-open subsets of $D$. The claimed result follows by showing that $\O$ is also a shape subsolution for the torsion functional as established in \cite[Theorem 3.1]{bucur-kth}, which in turn allows us to apply \cite[Theorem 2.2]{bucur-kth}. The interior density estimate follows by applying \cite[Proposition 3.11]{BucurVelichkov-multiphase}.\medskip
\item[(ii)] {\it $F$ is degenerate.} By \cite[Lemma 4.2]{KriventsovLin2019:degenerate} we can reduce the problem to the case of shape subsolution for the torsion functional. Then, the proof follows as before (see also \cite[Corollary 5.2 and Theorem 6.1.]{KriventsovLin2019:degenerate} for a direct proof the density estimate).\qedhere
\end{enumerate}
\end{proof}
\begin{theorem}\label{t:supersolution}
Let $F\colon \R^k \to [0,+\infty)$ be a Lipschitz function non-decreasing in each variable. Then, every shape supersolution $\O$ of \eqref{e:penalized} and \eqref{e:constrained} is an open set. Moreover
\begin{enumerate}
    \item[(i)] if $F$ is non-degenerate, the first $k$ normalized eigenfunctions $u_i\in H^1_0(\O)$, extended by zero over $\R^d\setminus \O$, are Lipschitz continuous on $\R^d$;
\item[(ii)] If $F$ is degenerate, then there exists an orthonormal family of eigenfunctions $u_j\in H^1_0(\Omega^\ast)$, for $j = 1, \dots, m$, each corresponding to eigenvalues not exceeding $\lambda_k(\Omega^*)$. Moreover, all these eigenfunctions are Lipschitz continuous on $\R^d$.
\end{enumerate}
\end{theorem}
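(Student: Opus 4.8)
The plan is to deduce openness and Lipschitz regularity from the outward minimality by the classical comparison with harmonic replacements, in the spirit of \cite{bucur-kth,MazzoleniTerraciniVelichkov:GAFA,KriventsovLin2019:degenerate}. As a preliminary reduction, exactly as in the proof of \cref{t:subsolution}, I would use that $F$ is Lipschitz and non-decreasing to pass from the supersolution property for \eqref{e:penalized} and \eqref{e:constrained} (the constrained case being first reduced to the penalized one) to an outward minimality inequality in which the eigenvalues appear linearly. In the non-degenerate case the bound $\partial_{\lambda_i}F\ge\mu>0$ from \cref{d:non-deg-deg} ensures that each of the first $k$ eigenvalues enters with a uniformly positive weight; in the degenerate case, where no such uniform bound is available, I would instead select a finite orthonormal family $u_1,\dots,u_m$ of eigenfunctions attached to eigenvalues not exceeding $\lambda_k(\O)$ that carries the active part of the constraint, following \cite{KriventsovLin2019:degenerate}.

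The heart of the argument is a Caffarelli-type energy estimate. Fix $x_0\in\partial\O$ and a small radius $r$, set $A:=\O\cup B_r(x_0)\supseteq\O$, and let $\tilde u_i$ be the harmonic replacement of $u_i$ in $B_r(x_0)$, so that $\tilde u_i=u_i$ outside the ball, $\Delta\tilde u_i=0$ inside, and $\tilde u_i\in H^1_0(A)$. After a Gram--Schmidt correction making the $\tilde u_i$ orthonormal in $L^2(A)$, I would feed them into the variational characterization of the sum $\sum_i\lambda_i(A)$. Since harmonic replacement lowers the Dirichlet energy by exactly $\int_{B_r(x_0)}|\nabla(u_i-\tilde u_i)|^2\,dx$, this yields a lower bound for $\sum_i\big(\lambda_i(\O)-\lambda_i(A)\big)$ in terms of these deficits. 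Inserting this into the outward-minimality inequality, whose right-hand side is controlled by $\Lambda\,|B_r(x_0)\sm\O|$, and using the positive weights from the previous step, I obtain the decisive estimate
\begin{equation*}
\sum_i\int_{B_r(x_0)}|\nabla(u_i-\tilde u_i)|^2\,dx\ \le\ C\,|B_r(x_0)\sm\O|.
\end{equation*}

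From this energy bound the remaining steps are standard. The estimate, together with the harmonicity of $\tilde u_i$ inside $B_r(x_0)$, forces the linear growth $\dashint_{\partial B_r(x_0)}|u_i|\,d\mathcal H^{d-1}\le C\,r$ at every free boundary point, which upgrades, via interior gradient estimates for the harmonic functions $u_i$ in $\O$, to the Lipschitz continuity of each $u_i$ on $\R^d$. Openness of $\O$ then follows: an analogous comparison for the torsion function $w_\O$ (the solution of $-\Delta w_\O=1$ in $\O$) yields its continuity, and since $w_\O>0$ precisely on $\O$, the identity $\O=\{w_\O>0\}$ up to a set of zero capacity shows that $\O$ is open.

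The step I expect to be the main obstacle is the simultaneous control of the $L^2$ normalisations under harmonic replacement. Because the eigenvalues $\lambda_1(A),\dots,\lambda_k(A)$ are coupled through the min--max, one cannot treat the components separately: the perturbation of both the numerators and the denominators of the Rayleigh quotients of the whole family $\tilde u_1,\dots,\tilde u_k$ must be tracked at once, and the cross terms produced by the Gram--Schmidt orthonormalisation have to be absorbed into the deficits $\int_{B_r(x_0)}|\nabla(u_i-\tilde u_i)|^2\,dx$. In the degenerate case there is the additional difficulty, handled as in \cite{KriventsovLin2019:degenerate}, of isolating the correct active subfamily $u_1,\dots,u_m$ and carrying out the whole scheme in the absence of the uniform lower bound $\partial_{\lambda_i}F\ge\mu$.
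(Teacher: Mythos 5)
Your scheme for part (i) --- pass from the supersolution property to an outward-minimality inequality in which the eigenvalues enter with uniformly positive weights, perform a harmonic replacement in $B_r(x_0)$ with a Gram--Schmidt correction, and extract the deficit bound $\sum_i\int_{B_r(x_0)}|\nabla(u_i-\tilde u_i)|^2\,dx\le C|B_r(x_0)\setminus\O|$ --- is exactly the mechanism behind the result the paper invokes; the survey's own proof of (i) is a citation of \cite{BucurMazzoleniPratelliVelichkov:ARMA}, which carries out this argument, including the control of the $L^2$-normalisation errors you correctly single out as the main technical point. Two caveats on the details: the energy bound alone does not ``force'' linear growth --- you need the Alt--Caffarelli dichotomy, i.e.\ the lower bound of the deficit by $c\,r^{-2}\big(\dashint_{\partial B_r(x_0)}|u_i|\,d\HH^{d-1}\big)^2\,|B_r(x_0)\cap\{|U|=0\}|$, so that either the zero set is negligible in $B_r(x_0)$ or the boundary average is $\le Cr$ --- and for $i\ge2$ the eigenfunctions change sign, so the replacement has to be run for $|U|$ (or two-sidedly), not componentwise as written.

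There are, however, two genuine gaps. First, the degenerate case (ii) is not merely a matter of ``isolating the active subfamily'': the outward minimality only produces Lipschitz eigenfunctions $u_{i_1},\dots,u_{i_\ell}$ attached to the eigenvalues carrying positive weight, and nothing prevents $\O$ from containing a positive-capacity subset $K\subset\{u_{i_1}=\dots=u_{i_\ell}=0\}$ that is invisible to this family; on such a set a further, possibly discontinuous eigenfunction may live, and $\O$ need not be open. This is the actual content of (ii), and it is resolved in \cite{KriventsovLin2019:degenerate} not by the replacement argument but by selecting a special minimizer and passing to the limit in the non-degenerate approximations; your plan does not address it. Second, your openness argument does not go through: continuity of the torsion function $w_\O$ via comparison would require $\O$ to be outward minimizing for the torsion energy, and the transfer from the spectral functional to the torsion functional (Bucur's argument in \cite{bucur-kth}) goes in the opposite, inward/subsolution direction used in \cref{t:subsolution}; it is not a consequence of the supersolution property. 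Moreover, even granting that continuity, the identity $\O=\{w_\O>0\}$ holds only up to a set of zero capacity, which yields an open \emph{representative} of $\O$ but does not make the given set $\O$ open. The route that works is to identify (a representative of) $\O$ with $\{|U|>0\}$ for a continuous vector $U$ of eigenfunctions --- immediate in the non-degenerate case, and precisely the point that fails, and must be repaired by the special selection, in the degenerate one.
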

\begin{proof}
In \cite{BucurMazzoleniPratelliVelichkov:ARMA} (see also \cite{DavidToro2015:AlmostMinimizers,BrianconHayouniPierre}) it was shown that for every eigenvalue $\lambda_i$ at which $\partial_{\lambda_i}F>0$ there is an associated eigenfunction $u_i$ which admits a Lipschitz continuous extension to the whole $\R^d$. This immediately implies claim (i). In the degenerate case (ii), the result from \cite{BucurMazzoleniPratelliVelichkov:ARMA} still allows to select a family of Lipschitz continuous eigenfunctions $u_{i_1},\dots, u_{i_\ell}$, but this doesn't exclude the presence of another eigenfunction $u_j$ on $\Omega$, which is not continuous. In fact, it might happen that the set $\Omega$ is not even an open set due to a presence of subset $K\subset\{u_{i_1}=\dots=u_{i_\ell}=0\}\cap\Omega$, which has positive capacity, but remains invisible to the selected eigenfunctions $u_{i_1},\dots, u_{i_\ell}$. In this case, the problem was solved in \cite{KriventsovLin2019:degenerate} by selecting a special minimizer $\Omega$ and by combining \cite[Lemma 5.1]{KriventsovLin2019:degenerate} with the limiting procedure in  \cite[Section 6]{KriventsovLin2019:degenerate}.
\end{proof}
To make the discussion more readable, from now on we will focus on the two most representative cases of non-degenerate and degenerate functionals, respectively \eqref{e:somma-intro} and \eqref{e:k-intro}. 
We will highlight relevant details from the general case whenever significant differences arise compared to these model cases.

\subsection{Regularity of optimal shapes for $\sum_{i=1}^k\lambda_i(A)$} 
Let $c\in \R$ and $\O\subset \R^d$ be an optimal set for the measure constrained problem
    \be\label{e:sum-D}
\min\left\{ \sum_{i=1}^k \lambda_i(A)\ :\ A\subset D\,,\ |A|=c\right\},
    \ee
where the sets $A$ can be chosen in the class of quasi-open sets or in the class of Lebesgue measurable sets (see \cite{Velichkov:BookShapeOptimization}).     
Then, the following regularity result holds.

\begin{theorem}\label{t:main-reg-sum}
 The minimizers of \eqref{e:sum-D} are open sets and the boundary of any optimal set $\O$ in $D$ can be decomposed as the disjoint union of a regular and a singular part
   $$
   \partial \O \cap D = \mathrm{Reg}(\partial \O) \cup \mathrm{Sing}(\partial \O),
   $$
   where 
   \begin{itemize}
        \item[(i)] the regular part $\mathrm{Reg}(\partial \O)$ is a smooth manifold of dimension $(d-1)$. Moreover, the vector $U = (u_1,\dots,u_k)$ of the normalized eigenfunctions satisfies
        $$
        |\nabla |U||=\sqrt{\Lambda} \quad \mbox{on }\mathrm{Reg}(\partial \O),\qquad \mbox{with }\,\Lambda:=\frac{2}{d}\sum_{i=1}^k\lambda_i(\O);
        $$
        \item[(ii)] the singular part $\mathrm{Sing}(\partial \O)$ is a closed subset of $\partial \O \cap D$. Moreover, there exists a dimensional threshold $N \in \{5,6,7\}$ such that the following properties holds
\begin{itemize}
\item[(1)] If $d<N$, then $\text{\rm Sing}(\partial \Omega)$ is empty.
\item[(2)] If $d\ge N$, then the Hausdorff dimension of $\text{\rm Sing}(\partial \Omega)$ is at most $d-N$, namely
\[
\HH^{d-N+\eps}\big(\text{\rm Sing}(\partial \Omega)\big)=0\quad\text{for every}\quad \eps>0.
\]
\end{itemize}
The dimensional threshold $N$ coincides with the dimensional threshold $d^*$ for the Alt-Caffarelli's functional \eqref{e:ac-functional}.
        \end{itemize}
\end{theorem}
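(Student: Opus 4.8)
The plan is to reduce the vectorial shape optimization problem to a scalar one-phase Bernoulli problem for the function $w:=|U|$, and then to import the regularity theory summarized in \cref{t:one-phase-scalar}. First I would record the structural consequences of \cref{t:subsolution} and \cref{t:supersolution}. Since any minimizer of \eqref{e:sum-D} is simultaneously a shape subsolution and a shape supersolution — after passing to the penalized formulation, i.e. replacing the constraint $|A|=c$ by a Lagrange-multiplier term $\Lambda|A|$, which is a standard step — \cref{t:supersolution}(i) gives that $\O$ is open and that the normalized eigenfunctions $u_1,\dots,u_k$ are Lipschitz on $\R^d$, while \cref{t:subsolution} gives that $\O$ is bounded, has finite perimeter, and satisfies the interior density estimate \eqref{e:interior-density}. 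Because $F=\sum_i\lambda_i$ is non-degenerate (each $\partial_{\lambda_i}F=1$), the first eigenfunction $u_1$ is strictly positive in $\O$, so that $\O_U=\{|U|>0\}=\{u_1>0\}=\O$ and $w=|U|$ is a Lipschitz function whose positivity set is exactly $\O$.

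Next I would establish that $w$ behaves like a solution of the one-phase problem. Each $u_i$ solves $-\Delta u_i=\lambda_i u_i$ in $\O$, so a direct computation together with the Kato-type inequality $|\nabla w|\le|\nabla U|$ yields
$$
\Delta w=\frac{|\nabla U|^2-|\nabla w|^2}{w}-\frac{1}{w}\sum_{i}\lambda_i u_i^2\;\ge\;-\Big(\sum_i\lambda_i\Big)\,w\qquad\text{in }\O.
$$
Thus $w$ is a nonnegative Lipschitz function, subharmonic up to a linearly decaying right-hand side, and by the optimality of $\O$ under inner variations it satisfies the free boundary condition $|\nabla w|=\sqrt\Lambda$ on $\partial\O\cap D$ in the viscosity sense, with $\Lambda=\tfrac2d\sum_i\lambda_i(\O)$; this is precisely the viscosity reformulation of \eqref{e:vectorial-bernoulli-sum-lambda-k-equation} isolated in \cite{MazzoleniTerraciniVelichkov:GAFA}. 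The eigenvalue terms $\lambda_i u_i$ are genuinely lower order at the free boundary, since the $u_i$ vanish there, so they do not affect the local geometry.

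I would then run the blow-up machinery exactly as in the scalar case. A Weiss-type monotonicity formula, in which the eigenvalue terms contribute only scale-subcritical corrections, guarantees that blow-up limits of $w$ at any free boundary point $x_0\in\partial\O\cap D$ exist and are $1$-homogeneous global minimizers of the Alt-Caffarelli energy \eqref{e:ac-functional}; the lower-order terms vanish in the rescaling limit. Declaring $x_0$ regular when the blow-up is a half-plane solution $\sqrt\Lambda\,(x\cdot\nu)_+$, the De Silva $\eps$-regularity/improvement-of-flatness theorem, together with the higher regularity of \cite{KinderlehrerNirenberg1977:AnalyticFreeBd,KinderlehrerNirenbergSpruck,KriventsovLin2018:nondegenerate}, upgrades flatness to smoothness, so $\mathrm{Reg}(\partial\O)$ is a smooth $(d-1)$-manifold on which $|\nabla|U||=\sqrt\Lambda$ holds classically. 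Federer's dimension-reduction principle, combined with the classification of $1$-homogeneous one-phase minimizers, then yields the dimensional estimates on $\mathrm{Sing}(\partial\O)$ with the same threshold $d^*$ as for \eqref{e:ac-functional}, giving $N=d^*$.

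The main obstacle is the passage from the vectorial minimality of $U$ to the scalar one-phase theory for $w=|U|$: one must show that $w$ enjoys both nondegeneracy from below and the correct viscosity free boundary condition despite the coupling among the components and the eigenvalue right-hand sides, and that the blow-up limits genuinely forget the lower-order terms. Equivalently, the delicate point is verifying that $w$ is a viscosity solution of the one-phase problem and that the Weiss density is monotone up to controllable errors; once this scalar reduction is in place, the regular/singular dichotomy and the dimension count follow the template of \cref{t:one-phase-scalar} essentially verbatim.
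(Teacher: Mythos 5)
Your overall blueprint (free boundary reformulation, blow-up analysis, viscosity free boundary condition, $\eps$-regularity, Federer dimension reduction) matches the paper's, but the central reduction you propose --- running De Silva's $\eps$-regularity theorem on the scalar function $w=|U|$ --- has a genuine gap. As your own computation shows, $w$ satisfies only $\Delta w\ge -Cw$ in $\O$: it is a viscosity \emph{subsolution} of the interior equation, not a solution. De Silva's improvement of flatness uses the interior PDE in an essential way (the partial Harnack inequality and the compactness of the linearized sequences, whose limit solves a Neumann problem, are derived from harmonicity), so it cannot be applied to $w$ alone; a subharmonic function satisfying the two-sided viscosity gradient condition on its free boundary is not known to have a $C^{1,\alpha}$ free boundary. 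This is precisely where the paper does something more: near a regular point it first shows that $\O$ is Reifenberg flat, hence NTA, and then invokes the Boundary Harnack Principle to write $u_i=g_iu_1$ with $g_i\in C^{0,\alpha}$, so that the condition $|\nabla|U||=\sqrt{\Lambda}$ becomes $|\nabla u_1|=\sqrt{\Lambda}\,g$ on $\partial\O$ for a H\"older continuous $g>0$, and De Silva's theorem is applied to $u_1$, which genuinely solves the interior equation. (The alternative routes of \cite{KriventsovLin2018:nondegenerate,DeSilvaTortone2020:ViscousVectorialBernoulli} avoid the Boundary Harnack step but replace it by a vectorial improvement of flatness tracking the subsolution $|U|$ from above \emph{and} the supersolution $u_1$ from below simultaneously; either way, the scalar function $w$ by itself does not suffice.)

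A second, smaller gap: you assert that blow-up limits of $w$ are $1$-homogeneous global minimizers of the Alt--Caffarelli energy. The Weiss formula gives homogeneity, and the almost-minimality of $U$ --- obtained via the Gram--Schmidt argument of \cite[Lemma 2.5]{MazzoleniTerraciniVelichkov:GAFA}, a step you should make explicit since the $L^2$-orthogonality constraint must be removed before any local competitor can be used --- gives that blow-ups $U_0$ of the \emph{vector} $U$ minimize the vectorial functional \eqref{e:vectorial-bernoulli-functional}. To conclude that $|U_0|$ minimizes the scalar functional one needs the additional structural fact \eqref{e:blow-up-xi} that all components of $U_0$ are proportional, $U_0=|U_0|\xi$; this is proved separately in \cite[Section 4]{MazzoleniTerraciniVelichkov:GAFA} and is also what ultimately justifies the identification $N=d^*$ in the dimension count for the singular set.
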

This result was first proved in  \cite[Theorem 1.3]{MazzoleniTerraciniVelichkov:GAFA} and in \cite{KriventsovLin2018:nondegenerate} in the case of general non-degenerate functionals; we also refer to \cite{CaffarelliShahgholianYeressian2018:vectorial}, which appeared simultaneously with \cite{MazzoleniTerraciniVelichkov:GAFA} and \cite{KriventsovLin2018:nondegenerate}, and to the works of Trey \cite{Trey:COCV,Trey:IHP} for the more general case of operators in divergence form. In \cite{KriventsovLin2018:nondegenerate} the authors generalized \cref{t:main-reg-sum} to the case of general non-degenerate functionals $F$. To aid the reader in comparing these different results and possible future extensions,
we suggest the following Kriventsov-Lin's scheme about the possible assumptions on $F$: 
\[
\xymatrix@M=4pt{ 
& \text{(I) $F$ is non-degenerate}\ar[d]\\
& \text{(II) } \txt{$F$ is differentiable \\ with respect to \\ domain variations} \ar[dl] \ar[dr]\\
	\text{(III) }  \txt{$F$ is interchangeable\\ with respect to $t_i$} & & \text{(IV) } \txt{$\lambda_{i+1}(\O)>\lambda_{i}(\O)$ \\ for every $i=1,\dots,k$}}
\]
If $F$ is non-degenerate in the sense of \cref{d:non-deg-deg}, then \cref{t:main-reg-sum} holds true for minimizers of $F$ by taking as dimensional threshold $N=3$ (see \cite[Theorem 1.1]{KriventsovLin2018:nondegenerate}). Again in \cite{KriventsovLin2018:nondegenerate}, the optimal estimate $N=d^\ast$ on the singular set was recovered by assuming the condition (IV). 
It is natural to ask whether the optimal dimension bound holds when assuming only (II). See \cref{e:open-singular} for further discussion.

\subsubsection{Free boundary reformulation} In the regularity theory, when dealing with a shape optimization problem, a common strategy is to reformulate it as a free boundary problem (see for instance \cite{BucurVelichkovSurvey}). This reformulation allows to transform the questions about the regularity of the boundary of an optimal shape into questions about the regularity of the support of a function minimizing a suitable energy. In the case of \eqref{e:sum-D} it is natural reformulate the problem in terms of the vector of normalized eigenfunctions $u_1,\dots,u_k$. For simplicity, we take $D=\R^d$, since in this case \eqref{e:sum-D} is equivalent (see \cite[Lemma 2.4]{MazzoleniTerraciniVelichkov:GAFA}) to the following problem: 
\begin{equation}\label{e:sum-R-d-penalized}
\min\left\{ \sum_{i=1}^k \lambda_i(A) + \Lambda |A|\ :\ A\subset \R^d \mbox{ open}\right\},\quad\mbox{with }\,\Lambda:=\frac{2}{dc}\sum_{i=1}^k\lambda_i(\O).
\end{equation}
Since the Dirichlet Laplacian on $A$, $|A|<+\infty$, is a positive and self-adjoint operator with compact resolvent, we have the following variational characterization for the sum of its  eigenvalues: 
\begin{equation}\label{e:vectorial-reformulation-sum-lambda-k-2}
\sum_{i=1}^k \lambda_i(A)=\mathrm{min}\left\{\int_{\R^d}|\nabla V|^2\,dx\  : \ V \in H^1_0(A;\R^k),\, \int_{\R^d}v_iv_j\,dx = \delta_{ij}\right\}.
\end{equation}
Thus, the vector $U:=(u_1,\dots,u_k)$ of the first $k$ normalized eigenfunctions of an optimal set $\O$ for \eqref{e:sum-R-d-penalized} is a solution of the problem
\begin{equation}\label{e:vectorial-reformulation-sum-lambda-k}
\mathrm{min}\left\{\int_{\R^d}|\nabla V|^2\,dx + \Lambda |\{|V|>0\}| \colon V \in H^1(\R^d;\R^k),\, \int_{\R^d}v_iv_j\,dx = \delta_{ij}\right\}.
\end{equation}
Vice-versa, if $U\in H^1(\R^d;\R^k)$ minimizes \eqref{e:vectorial-reformulation-sum-lambda-k}, then the set $\O_U:=\{|U|>0\}$ is optimal for \eqref{e:sum-R-d-penalized}.

\subsubsection{Almost-minimality of the vectors of eigenfunctions} By applying a Gram-Schmidt
procedure (see \cite[Lemma 2.5]{MazzoleniTerraciniVelichkov:GAFA}), we can test the optimality of $U$ with competitors $\widetilde U$ that do not satisfy the $L^2$-orthogonality constraint in \eqref{e:vectorial-reformulation-sum-lambda-k}. Precisely, $U$ satisfies the following almost-minimality condition (see \cite[Proposition 2.1]{MazzoleniTerraciniVelichkov:GAFA}): for every 
$\delta \geq \lVert U\rVert_{L^\infty}$ there exist $K,\eps>0$ such that 
$$
\int_{\R^d}|\nabla U|^2\,dx + \Lambda |\{|U|>0\}| 
\leq \big(1+ K \lVert U-\widetilde{U}\rVert_{L^1}\big)\int_{\R^d}|\nabla \widetilde{U}|^2\,dx + \Lambda |\{|\widetilde{U}|>0\}|,
$$
for every $\widetilde{U}\in H^1(\R^d;\R^k)$ such that $\lVert \widetilde{U}\rVert_{L^\infty} \leq \delta $ and $\lVert U-\widetilde{U}\rVert_{L^1}\leq \eps$. Thus, the study of regularity of the minimizers of \eqref{e:vectorial-reformulation-sum-lambda-k} boils down to the study of the almost-minimizers of the vectorial Bernoulli problem (with non-negative first component). We notice that almost-minimizers of Bernoulli free boundary problems have been studied for instance in \cite{DavidToro2015:AlmostMinimizers,DavidEngelsteinToro2019:AlmostMinimizers}.


\subsubsection{Blow-up sequences and blow-up limits} Starting from this almost-minimality condition, one can deduce important properties such as the Lipschitz continuity of $U$, the non-degeneracy of the norm $|U|$ near the free boundary, the validity of a Weiss-type monotonicity formula (see \cite{MazzoleniTerraciniVelichkov:GAFA}), as well as the non-degeneracy of the first eigenfunction $u_1$ (for which we refer to \cite{KriventsovLin2018:nondegenerate}) and the fact that $\O_U=\O_{u_1}$. These results lead to a complete characterization of the blow-up limits of $U$ with fixed centers on $\partial \O_U$ (see \cite{MazzoleniTerraciniVelichkov:GAFA}). Precisely, given $x_0 \in \partial \O_U$ and $r>0$ the blow-up sequence
\be\label{e:blow-up-sequence}
U_{x_0,r}(x):=\frac{1}{r}U(x_0+rx)
\ee
is uniformly Lipschitz and locally uniformly bounded in $\R^d$ and so, up to a subsequence, it converges locally uniformly in $\R^d$ to a $1$-homogeneous non-trivial local minimizer $U_0 \in H^1_\loc(\R^d;\R^k)$ of \eqref{e:vectorial-bernoulli-functional}. Moreover, we can show that 
\be\label{e:blow-up-xi}
U_0(x) = |U_0(x)|\xi \quad \mbox{in }\R^d,\mbox{ with }\xi\in \partial B_1\subset \R^k.
\ee
Ultimately, such characterization implies that the norm $|U_0|$ is a $1$-homogeneous local minimizer of the Alt-Caffarelli functional \eqref{e:ac-functional} (see \cite[Section 4]{MazzoleniTerraciniVelichkov:GAFA}).\\

In the case of general non-degenerate functionals, this program has been generalized in \cite[Section 3]{KriventsovLin2018:nondegenerate} (see also \cite[Remark 1.2]{KriventsovLin2018:nondegenerate}). In this setting, it is not possible to prove directly that the blow-up limits are local minimizers of a vectorial functional of the form \eqref{e:vectorial-bernoulli-functional}.\\
Indeed, in \cite[Lemma 3.2 and Proposition 8.1]{KriventsovLin2018:nondegenerate}, the authors characterized the blow-up limits for functionals satisfying one of the following two properties:
\begin{itemize}
    \item[(i)] {\it Property S: all the eigenvalues of the optimal domain are simple.}\\ If Property S holds, then the blow-up limits $U_0$ at $x_0\in\partial \O_U$ are $1$-homogeneous local minimizers of 
    $$
    V \mapsto \sum_{i=1}^k \partial_{\lambda_i}F(\O)\int_{\R^d} |\nabla v_i|^2 \,dx + \Lambda |\{|V|>0\}|. 
    $$
    \item[(ii)] {\it Property E: there exists $\xi\in \R^k$ with non-negative components and a collection of orthonormal eigenfunctions $U:=(u_1,\dots,u_k)$ such that}
    $$
    \sum_{i=1}^k \xi_i|\nabla u_i|^2 = \Lambda \quad\mathcal{H}^{n-1}\mbox{-a.e. on }\partial \O_U.
    $$
    If Property E holds, then every blow-up limit is a $1$-homogeneous vector $U_0 :=(u_{0,1},\dots,u_{0,k})$ satisfying   
    $$
    \Delta U_0 = - (\nabla U_0\cdot \nu) \,d\mathcal{H}^{n-1}\mres\partial\O_U,\qquad\sum_{i=1}^k \xi_i|\nabla u_{0,i}|^2 = \Lambda \quad\mathcal{H}^{n-1}\mbox{-a.e. on }\partial\O_U.
    $$
\end{itemize}
Notice that, in both the previous cases, for every blow-up limit $U_0=(u_{0,1},\dots,u_{0,k})$ we have that $\O_{U_0}=\O_{u_{0,1}}$ and that all the components of $U_0$ are proportional, i.e., \eqref{e:blow-up-xi} holds true.\smallskip

In \cite{KriventsovLin2018:nondegenerate} it was proved that every minimizer satisfies property $E$, which is a surprising result, given that the functional $F(\lambda_1,\dots,\lambda_k)$ is not necessarily differentiable with respect to domain variations. However, it is not clear if all minimizers satisfy property S (see \cite[Lemma 3.4]{KriventsovLin2018:nondegenerate} where Property E is deduced from  Property S). By looking to the previous characterization, it is  immediate to check that
\be\label{e:property-SE}
\begin{aligned}
\text{S}&\,\,\longmapsto\,\,\text{Blow-ups are homogeneous global minimizers of the Alt-Caffarelli's functional}\\
\text{E}&\,\,\longmapsto\,\,\text{Blow-ups are homogeneous stationary solutions to the one-phase problem.}
\end{aligned}
\ee

\begin{remark}
A  crucial property of  the non-degenerate functionals is that the first eigenfunctions on optimal domains are non-degenerate (see \cite[Lemma 2.5]{KriventsovLin2018:nondegenerate}). This in turn implies the validity of the following exterior density estimate (see \eqref{e:interior-density}), i.e., there exist $\eps_d,r_d>0$ such that 
\be\label{e:exterior-density}
|\O_U\cap B_r(x_0)|\leq (1-\eps_d)|B_r|\quad\mbox{for all}\quad x_0 \in \partial\O_U\quad\text{and all}\quad r\leq r_d,
\ee
which excludes the presence of cusp singularities. Another immediate consequence is that the optimal sets in $\R^d$ are connected
(\cite[Corollary 4.3]{MazzoleniTerraciniVelichkov:GAFA} and \cite[Lemma 2.6]{KriventsovLin2018:nondegenerate}).
\end{remark}

\subsubsection{Decomposition of the free boundary} We can decompose the free boundary in terms of the $1$-homogeneous solution that appears at blow-up, precisely:
\begin{enumerate}
    \item[(i)] the regular part $\mathrm{Reg}(\partial \O)$ as the collection of points at which there is (at least one) blow-up limit $U_0$ of the form
    \be\label{e:flat-solution}
    U_0(x)=\sqrt{\Lambda}(x\cdot \nu)^+ \xi 
    \ee
    where $\Lambda>0$,  $\nu \in \R^d, |\nu|=1$ and $\xi \in \R^k$, $|\xi|=1$; 
    \item[(ii)] the singular part $\mathrm{Sing}(\partial \O)$
        as the remaining part of $\partial \O$. 
\end{enumerate}
As a consequence of the above characterization of the blow-up limits, in \cite[Lemma 5.2]{MazzoleniTerraciniVelichkov:GAFA}, it was shown that the following optimality condition 
\be\label{e:viso}
|\nabla |U||=\sqrt{\Lambda} \quad \mbox{on }\partial \O\,
\ee
for the vector of eigenfunctions $U=(u_1,\dots,u_k)$, holds in viscosity sense on $\partial\Omega$. Precisely, for every $x_0 \in \partial \O$ and $\phi \in C^\infty(\R^d)$, we have
\begin{enumerate}
    \item if $\phi^+$ touches $|U|$ from above at $x_0$, then $|\nabla \phi(x_0)|\geq \Lambda$;
    \item if $\phi^+$ touches $|U|$ from below at $x_0$, then $|\nabla \phi(x_0)|\leq \Lambda$.
\end{enumerate}
 \medskip
 
\subsubsection{Regularity of the regular part} We next discuss the regularity of $\mathrm{Reg}(\partial \O)$ in the case of the functional $F=\lambda_1+\dots+\lambda_k$ and for general non-degenerate functionals.\\

\noindent Case 1: $F(A):=\sum_{i=1}^k \lambda_i(A)$. This case  was studied in \cite{MazzoleniTerraciniVelichkov:GAFA}. The analysis starts from the non-degeneracy of the first eigenfunction of the optimal domain (for a proof we refer to \cite{KriventsovLin2018:nondegenerate}), which allows to show that all the blow-up limits $U_0$ contain a component, which is positive on the cone $\Omega_{U_0}$. In particular, this allows to characterize the regular part as the set of points at which the Lebesgue density of $\O_U$ is $1/2$ and the singular part as the set of points at which the Lebesgue density of $\O_U$ is between $1/2+\delta$ and $1-\delta$ for some dimensional constant $\delta>0$. This in particular allows to prove that $\mathrm{Reg}(\partial \O)$ is relatively open.  It also allows to show (by considerations based on the Weiss' monotonicity formula and contradiction arguments) that for every regular point $x_0 \in \mathrm{Reg}(\partial \O)$ there exists a neighborhood $\mathcal{N}_{x_0}$ where $\O$ is a Reifenberg flat (and thus an NTA) domain. Now the boundary Harnack principle on NTA domains provides the existence of non-negative functions $g_i\colon \mathrm{Reg}(\partial \O)\cap \mathcal{N}_{x_0}\to \R$, with $i=2,\dots,k$, such that  
$$
u_i = g_i u_1 \quad\mbox{in }\overline{\O}\cap \mathcal{N}_{x_0}.
$$
Thus, from \eqref{e:viso} we get that there is a positive function $g \in C^{0,\alpha}$ such that
$$
|\nabla u_1|= \sqrt{\Lambda}g \quad \mbox{on }\mathrm{Reg}(\partial \O)\cap \mathcal{N}_{x_0},
$$
holds in a viscosity sense and so the $C^{1,\alpha}$-regularity simply follows from the $\eps$-regularity theorem of De Silva \cite{DeSilva:FreeBdRegularityOnePhase}. Finally, the smoothness of the regular part of $\partial \O$ is deduced by improving the regularity of $g$ via higher-order boundary Harnack inequalities (see \cite{HOBHDeSilva,HOBHdeg}) and then by relying on the results by Kinderlehrer and Nirenberg \cite{KinderlehrerNirenberg1977:AnalyticFreeBd}.\\

\noindent Case 2: General non-degenerate functionals. When $F$ is differentiable with respect to domain variations it is possible to show that the following boundary condition holds in a viscosity sense
$$
\sum_{i=1}^k \partial_{\lambda_i}F(\O) |\nabla u_i|^2 = \sqrt{\Lambda}\qquad \mbox{on }\partial \O\cap D.
$$
Therefore, the vector $\overline{U}:=(\sqrt{\partial_{\lambda_1}F(\O)}u_1,\dots,\sqrt{\partial_{\lambda_k}F(\O)}u_k)$ satisfies \eqref{e:viso} in a viscosity sense. If $F$ is just non-degenerate, then the terms $\partial_{\lambda_i}F(\O) $ are replaced by some non-negative coefficients $\xi_i$, which are obtained via an approximation argument similar to the one in \cite{RamosTavaresTerracini}. We postpone the discussion on the proof since it coincides with the one for the vectorial one-phase Bernoulli problem. The regularity of the regular part now follows as in the previous case. 

\subsubsection{Estimates on the dimension of the singular part} Finally, we briefly recall the few known results about the singular set $\mathrm{Sing}(\partial \O)$.
\begin{enumerate}
    \item For the functional $F(A):=\sum_{i=1}^k \lambda_i(A)$, we know that blow-up limits are minimizers of the Alt-Caffarelli functional. Therefore, the possible study of singular minimizing cones for the one-phase Bernoulli problem implies that $\mathrm{Sing}(\partial \O)$ is a closed set of Hausdorff dimension at most $d-d^\ast$, where $d^* \in \{5,6,7\}$ is the critical dimension for minimizers of the one-phase problem. \medskip
    \item For general non-degenerate functionals $F$, the blow-up limits are just stationary points of the Alt-Caffarelli functional (thanks to the fact that Property E holds in this case). Therefore, since singular critical cones can occur in any dimension greater than or equal to three, we only get that $\mathrm{Sing}(\partial \O)$ is a closed set of Hausdorff dimension at most $d-3$. See \cref{e:open-singular} for a discussion on the topic.  
\end{enumerate}

\subsection{Regularity of the optimal shapes for $\lambda_k(A)$ with $k>1$} 
Given constants $k\ge 2$ and $\Lambda>0$, and a fixed open set $D\subset \R^d$, consider the following the shape optimization problem
\begin{equation}\label{e:shape-opt-pb-for-lambda-k-in-D}
\min\Big\{\lambda_k(A) + \Lambda |A|:\ A\subset D \Big\},
\end{equation}
where $A$ varies in the class of quasi-open sets or in the class of Lebesgue measurable sets.
The main difference between the degenerate problem \eqref{e:shape-opt-pb-for-lambda-k-in-D} and the non-degenerate \eqref{e:sum-R-d-penalized} come from the variational characterization of $\lambda_k(A)$ and of $\sum_{j=1}^k\lambda_j(A)$ on a fixed open (or quasi-open, or just Lebesgue measurable) domain $A\subset \R^d$ of finite Lebesgue measure. Precisely, we have:
\begin{equation}\label{e:vectorial-reformulation-lambda-k-2}
\lambda_k(A)=\min\left\{\max_{j=1,\dots,k}\int_{A}|\nabla v_j|^2\,dx \ \colon \ v_j \in H^1_0(A),\, \int_{\R^d}v_iv_j\,dx = \delta_{ij}\right\}.
\end{equation}
In particular, the functional from \eqref{e:vectorial-reformulation-sum-lambda-k-2} 
\[
H^1_0(A;\R^k)\ni v=(v_1,\dots, v_k)\mapsto \sum_{j=1}^k\int_{A}|\nabla v_j|^2\,dx
\]
is smooth and quadratic, while the functional from \eqref{e:vectorial-reformulation-lambda-k-2}
\[
H^1_0(A;\R^k)\ni v=(v_1,\dots, v_k)\mapsto \max_{j=1,\dots,k}\int_{A}|\nabla v_j|^2\,dx
\]
 is not even differentiable with respect to outer variations. This makes the analysis of the optimal sets considerably harder. For instance, in the case of \eqref{e:sum-R-d-penalized} the Lipschitz continuity of the eigenfunctions on an optimal set follows essentially from the classical results of Alt-Caffarelli \cite{AltCaffarelli:OnePhaseFreeBd} and Alt-Caffarelli-Friedman \cite{AltCaffarelliFriedman1984:TwoPhaseBernoulli}, while establishing the Lipschitz continuity for eigenfunctions on minimizers of \eqref{e:shape-opt-pb-for-lambda-k-in-D} requires a  suitable approximation procedure and uniform Lipschitz bounds for minimizers of a sequence of functionals with degenerating coefficients (see \cite{BucurMazzoleniPratelliVelichkov:ARMA}).\medskip 

\noindent The singular nature of the functional in \eqref{e:shape-opt-pb-for-lambda-k-in-D} also reflects in the local structure of the optimal sets: the minimizers of \eqref{e:sum-R-d-penalized} admit exterior density estimates at all points on the free boundary, while the optimal sets for \eqref{e:vectorial-reformulation-lambda-k-2} can have boundary points with Lebesgue density $1$.\medskip

\noindent In the class of degenerate problems \eqref{e:shape-opt-pb-for-lambda-k-in-D} the case $k=2$ is special. When $k=2$ and $D=\R^d$, the minimizers are always the given by the union of two disjoint balls of the same radius. For $k=2$, $D$ bounded and $\Lambda>0$ small, minimizers are non-trivial sets touching the fixed boundary $\partial D$, while in the interior of $D$ they present cusp-like boundary singularities just as in the general case $k\ge 2$.

\subsubsection{Regularity of the optimal shapes for $\lambda_2$}

The regularity of the optimal sets for \eqref{e:shape-opt-pb-for-lambda-k-in-D} is particularly well-understood in the case $k=2$. Here, in order to deal with minimizers with non-trivial energy, we assume that the domain $D$ is a bounded open set. The full regularity of the optimal sets is obtained in \cite{MazzoleniTreyVelichkov:AnnIHP} (see  \cref{thm:MazzoleniTreyVelichkov} below) in domains $D$ with $C^{1,\alpha}$ boundary; in the case of a general bounded open set $D$ the regularity of the optimal domain is known only in the interior of $D$. 

\begin{theorem}[\cite{MazzoleniTreyVelichkov:AnnIHP}]\label{thm:MazzoleniTreyVelichkov}
Let $D\subset \R^d$ be an open  bounded set of class $C^{1,\beta}$, for some $\beta>0$, and let $\Lambda>0$ be a given constant. 
Let $\Omega\subset D$ be a minimizer of \eqref{e:shape-opt-pb-for-lambda-k-in-D}.
Then, there are two disjoint open sets $\Omega_+$ and $\Omega_-$ such that 
$$\lambda_2(\Omega_+\cup\Omega_-)=\lambda_2(\Omega)\qquad \text{and}\qquad |\,\Omega\,\setminus\,(\Omega_+\cup\Omega_-)|=0.$$
Moreover, each of the boundaries $\partial \Omega_\pm$ can be decomposed as
$$\partial \Omega_\pm=\text{\rm Reg}\,(\partial \Omega_\pm)\cup \text{\rm Sing}\,(\partial \Omega_\pm),$$
where 
$$\text{\rm Reg}\,(\partial \Omega_+)\cap \text{\rm Sing}\,(\partial \Omega_+)=\text{\rm Reg}\,(\partial \Omega_-)\cap \text{\rm Sing}\,(\partial \Omega_-)=\text{\rm Sing}\,(\partial \Omega_+)\cap \text{\rm Sing}\,(\partial \Omega_-)=\emptyset$$
and where the following holds:
	\begin{enumerate}[\quad\rm(i)]
		\item The regular set $\text{\rm Reg}(\partial \Omega_\pm)$ is an open subset of $\partial \Omega_\pm$, which is locally the graph of a $C^{1,\alpha}$ function, for some $\alpha>0$. Moreover, $\text{\rm Reg}(\partial \Omega_\pm)$ contains both the two-phase free boundary $\partial\Omega_+\cap\partial\Omega_-$ and the contact sets with the boundary of the box: $\partial\Omega_+\cap\partial D$ and $\ \partial\Omega_-\cap\partial D$.\medskip
		\item The singular set $\text{\rm Sing}(\partial \Omega_\pm)$ is a closed subset of $\partial \Omega_\pm$ and contains only one-phase points. Moreover, if $d^*\in\{5,6,7\}$ is the critical dimension for the minimizers of the one-phase Alt-Caffarelli's functional, then: 
		\begin{itemize}
			\item if $d<d^*$, then the singular set is empty,
			\item if $d=d^*$, then the singular set consists of a finite number of points,
			\item if $d>d^*$, then the singular set has Hausdorff dimension at most $d-d^*$.
		\end{itemize} 
	\end{enumerate}
\end{theorem}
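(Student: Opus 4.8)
The plan is to reduce this degenerate spectral problem to a two-phase Bernoulli free boundary problem, and then to analyse the free boundary by a blow-up argument, treating one-phase points, two-phase points, and contact with $\partial D$ separately. First I would construct the decomposition from the nodal domains of the second eigenfunction. By \cref{t:supersolution} the eigenfunctions on the optimal set $\Omega$ admit a Lipschitz continuous representative; let $w$ be a continuous second eigenfunction, which may be chosen to change sign with exactly two nodal domains (Courant). The open sets $\Omega_\pm:=\{\pm w>0\}$ are these nodal domains, and on each of them $|w|$ is a positive first eigenfunction with $\lambda_1(\Omega_\pm)=\lambda_2(\Omega)$. Since the spectrum of a disjoint union is the ordered union of the spectra of its pieces, this gives $\lambda_2(\Omega_+\cup\Omega_-)=\lambda_2(\Omega)$, while $|\Omega\setminus(\Omega_+\cup\Omega_-)|$ is the measure of the nodal set $\{w=0\}\cap\Omega$ and hence vanishes. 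In particular $\Omega_+\cup\Omega_-$ has the same cost as $\Omega$ and is itself a minimizer, so $\Omega_\pm$ are the sought sets and it remains to analyse their boundaries.

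\textbf{Step 2 (two-phase variational setup and monotonicity).} Next I would recast the optimality of $w$ as an almost-minimality for a two-phase Alt--Caffarelli functional. Proceeding as in the variational reformulation of the degenerate case, the requirement that $\lambda_2(\Omega_+\cup\Omega_-)=\max\{\lambda_1(\Omega_+),\lambda_1(\Omega_-)\}$ be minimal, together with a Gram--Schmidt adjustment removing the $L^2$-normalisation constraint, shows that $w$ is an almost-minimizer of
\[
v\mapsto \int_D|\nabla v|^2\,dx+\Lambda\,|\{v\neq 0\}\cap D|,
\]
the eigenvalue terms $\lambda_2(\Omega)\int_D w^2\,dx$ entering only as a lower-order volume perturbation. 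From this almost-minimality, the density estimates of \cref{t:subsolution}, and the non-degeneracy of the first eigenfunctions, I would derive uniform Lipschitz bounds and two-sided density bounds for $\{w\neq 0\}$, a Weiss-type monotonicity formula for each phase, and an Alt--Caffarelli--Friedman monotonicity formula (in the spirit of \cite{AltCaffarelliFriedman1984:TwoPhaseBernoulli}) controlling the joint behaviour of $w^+$ and $w^-$.

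\textbf{Step 3 (blow-up: two-phase points are regular, one-phase stratification).} The blow-up analysis then splits the interior free boundary $\partial\{w\neq 0\}\cap D$ into two-phase points $\partial\Omega_+\cap\partial\Omega_-$ and one-phase points. At a two-phase point the Alt--Caffarelli--Friedman formula forces both blow-up limits to be complementary half-plane solutions $a\,(x\cdot\nu)^+$ and $b\,(x\cdot\nu)^-$ along one common direction $\nu$; the resulting two-sided flatness yields, via the De Silva $\eps$-regularity theorem \cite{DeSilva:FreeBdRegularityOnePhase} applied on each side, a $C^{1,\alpha}$ free boundary. Hence the entire two-phase set is regular and the singular set consists only of one-phase points. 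At a one-phase point one of $w^\pm$ vanishes in a neighbourhood, so the surviving component is an almost-minimizer of the scalar functional \eqref{e:ac-functional}; its $1$-homogeneous blow-up limits are one-phase minimizers, and \cref{t:one-phase-scalar} yields the smooth $(d-1)$-dimensional regular part together with the singular part and its stratification by the one-phase threshold $d^*$ (empty for $d<d^*$, discrete for $d=d^*$, of dimension at most $d-d^*$ for $d>d^*$).

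\textbf{Step 4 (boundary contact and the main obstacle).} It remains to treat the contact with the fixed boundary. Exploiting that $D$ is of class $C^{1,\beta}$, I would flatten $\partial D$ and run a boundary version of the $\eps$-regularity argument (following \cite{ChangLaraSavin:BoundaryRegularityOnePhase,DePhilippisSpolaorVelichkov:JEMS}) to show that each $w^\pm$ leaves $\partial D$ along a $C^{1,\alpha}$ surface meeting it transversally, so that $\partial\Omega_+\cap\partial D$ and $\partial\Omega_-\cap\partial D$ fall into the regular part. The main obstacle is the two-phase interaction: establishing the Alt--Caffarelli--Friedman monotonicity formula in this eigenvalue setting, and using it to exclude singular behaviour precisely at the two-phase points (so that the singular set is purely one-phase and inherits the sharp one-phase dimension bound), is the crux of the argument. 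The boundary regularity up to the $C^{1,\beta}$ container is the second delicate point, since it requires the finer boundary $\eps$-regularity theory rather than its interior counterpart.
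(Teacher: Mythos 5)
Your overall architecture (reformulate via the positive/negative parts of a second eigenfunction, blow up, split into one-phase and two-phase points, handle the contact with $\partial D$ by boundary $\eps$-regularity) matches the paper's, but two of your key steps would not go through as written. The first gap is in Step 2. The functional governing this problem is not the sum of the Dirichlet energies of $v_+$ and $v_-$ but the maximum $J_\infty(v_+,v_-)=\max\{\int_D|\nabla v_+|^2,\int_D|\nabla v_-|^2\}$ as in \eqref{e:J_infty}, and this is precisely where the degeneracy of $\lambda_2$ bites: if $\mathcal R(v_+)<\mathcal R(v_-)$, outer perturbations of $v_+$ do not change $J_\infty$ to first order, so no Gram--Schmidt argument yields almost-minimality of $v\mapsto\int_D|\nabla v|^2+\Lambda|\{v\neq0\}|$ with a single constant $\Lambda$, nor the free boundary conditions. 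Consistently with this, the optimality conditions actually satisfied by the selected eigenfunction involve two a priori different constants $\alpha_+,\alpha_-$ and the coupled two-phase condition $|\nabla u_+|^2-|\nabla u_-|^2=\alpha_+^2-\alpha_-^2$, not the symmetric condition your single-$\Lambda$ almost-minimality would produce. The paper's proof gets around this by approximating $u_2$ with minimizers of the smooth functionals $\big(\mathcal R(v_+)^p+\mathcal R(v_-)^p\big)^{1/p}$ plus a fidelity term and passing to the limit $p\to\infty$; this approximation also performs the nontrivial \emph{selection} of the right second eigenfunction when $\lambda_2(\Omega)$ has multiplicity, a point you dispose of too quickly by invoking Courant (note also that for a disconnected minimizer a second eigenfunction need not change sign, so ``two nodal domains'' is not automatic).

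The second gap is in Step 3, at the two-phase points. Even granting that an Alt--Caffarelli--Friedman argument shows the blow-ups there are two-plane solutions, you cannot conclude by applying De Silva's one-phase theorem ``on each side'': the free boundary condition at $\partial\Omega_+\cap\partial\Omega_-$ only prescribes the difference $|\nabla u_+|^2-|\nabla u_-|^2$ together with the one-sided inequalities $|\nabla u_\pm|\ge\alpha_\pm$, so neither phase solves a closed one-phase problem there; moreover the two-phase set can branch into one-phase pieces, and near such branch points neither a pure one-phase nor a pure two-phase local analysis applies. This is why the paper invokes the genuinely two-phase regularity theorem of \cite{DePhilippisSpolaorVelichkov:Inventiones} to show that all two-phase points (including branch points) are regular, and only then reduces the singular set to one-phase points, where your use of \cref{t:one-phase-scalar} and the threshold $d^*$ is correct. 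Your Step 4 on the $C^{1,\beta}$ container, via \cite{ChangLaraSavin:BoundaryRegularityOnePhase}, is in line with what is done.
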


We notice that the conclusion of the above theorem is not  that any optimal set $\Omega$ is regular, but that we can always choose a representative of $\Omega$ of the form $\Omega_+\cup\Omega_-$, where $\Omega_+$ and $\Omega_-$ are disjoint  sets, which are both regular. In fact, in general, there are optimal sets $\Omega$, which are not smooth. For instance, it is immediate to check that if $\Omega$ a minimizer and if $u_2\in H^1_0(\Omega)$ is the second eigenfunction of the Dirichlet Laplacian on $\Omega$, then the set $\{u_2>0\}\cup\{u_2<0\}$ is also a minimizer, as well as any set $\widetilde\Omega$ contained in $\Omega$ and containing $\{u_2>0\}\cap\{u_2<0\}$. One can use this observation to construct non smooth solutions by erasing portions of the nodal line $\partial\{u_2>0\}\cup\partial\{u_2<0\}$.\medskip

A key step in the proof of \cref{thm:MazzoleniTreyVelichkov} is the observation that the problem \eqref{e:shape-opt-pb-for-lambda-k-in-D} is equivalent to the variational problem 
\begin{equation*}\label{e:lambda-2-auxiliary-problem}
\min\Big\{\max\big\{\lambda_1(\Omega_1);\lambda_1(\Omega_2)\big\}+\Lambda|\Omega_1\cup\Omega_2|\ :\ \Omega_1\ \text{ and }\ \Omega_2\ \text{ are disjoint quasi-open subsets of }\ D\Big\},
\end{equation*}
which can also be written as the following two-phase free boundary problem whose minimizer is precisely the second eigenfunction $u_2$ on the optimal domain $\Omega$:
\begin{equation}\label{e:fb_infty}
\min\left\{J_\infty(v_+,v_-)+\Lambda\big|\{v\neq 0\}\big|\ :\ v\in H^1_0(D),\ \int_D v_+^2\,dx=\int_D v_-^2\,dx=1\right\},
\end{equation}
where 
\begin{equation}\label{e:J_infty}
J_\infty(v_+,v_-):=\max\left\{\int_D |\nabla v_+|^2\,dx\ ;\ \int_D |\nabla v_-|^2\,dx\right\}.
\end{equation}
The key point in the proof of \cref{thm:MazzoleniTreyVelichkov} is to select a Lipschitz continuous second eigenfunction $u\in H^1_0(\Omega)$ that satisfies the first order optimality conditions 
\begin{equation*}\label{e:opt_con_intro}
\begin{cases}
|\nabla u_+|=\alpha_+>0&\quad\text{on}\quad \partial\{u>0\}\setminus \partial\{u<0\}\cap D,\\
|\nabla u_-|=\alpha_->0&\quad\text{on}\quad \partial\{u<0\}\setminus \partial\{u>0\}\cap D,\\
|\nabla u_\pm|\ge \alpha_\pm\quad\text{and}\quad |\nabla u_+|^2-|\nabla u_-|^2=\alpha_+^2-\alpha_-^2&\quad\text{on}\quad \partial\{u>0\}\cap \partial\{u<0\}\cap D.
\end{cases}
\end{equation*}
in viscosity sense for some strictly positive constants $\alpha_+$ and $\alpha_-$. The conclusion then follows by the two-phase theorem \cite{DePhilippisSpolaorVelichkov:Inventiones} and by the one-phase regularity theory (see \cite{Velichkov:BookRegularityOnePhaseFreeBd}). Since the functional $J_\infty$ from \eqref{e:J_infty} is not differentiable, in \cite[Section 6]{MazzoleniTreyVelichkov:AnnIHP} this is achieved by approximating $u_2$ with minimizers $v_p$, as $p\to+\infty$, of a family of smooth functionals of the form
\begin{equation*}\label{eq:mainp}
\min\left\{\Big(\big(\mathcal R(v_+)\big)^p+\big(\mathcal R(v_-)\big)^p\Big)^{1/p}+\int_{D}|u_2-v|^2+|\Omega_v| : v\in H^1_0(D)\right\},
\end{equation*}
where by $\mathcal R(\phi)$ we denote the Rayleigh quotient of a non-zero function $\phi\in H^1(\R^d)$.

\subsubsection{Regularity of the optimal shapes for $\lambda_k$ with $k\ge 3$}
The case $k\ge 3$ shares several key features with the case $k=2$ discussed in the previous subsection. First of all, the regularity is not achieved directly for the optimal sets $\Omega$ but for a suitably selected representative of $\Omega$. The key difficulty lies again in the singular nature of the functional $\lambda_k$. On the other hand, when $k\ge 3$ the associated free boundary problem is not a two-phase problem, but a vectorial one, which significantly complicates the analysis. The most complete regularity result up to this point was achieved by Kriventsov and Lin and is the following. 

\begin{theorem}[\cite{KriventsovLin2019:degenerate}]\label{thm:Kriventsov-Lin-lambda_k}
 Let $D$ be an open set in $\R^d$ and let $\Omega$ be a (quasi-open) minimizer of \eqref{e:shape-opt-pb-for-lambda-k-in-D}. Then, there is a representative $\widetilde\Omega$ of $\Omega$ for which the free boundary $\partial\widetilde\Omega\cap D$ can be decomposed as the disjoint union of a regular part $\mathrm{Reg}(\partial \widetilde\O)$, a one-phase singular part $\mathrm{Sing}_1(\partial \widetilde\O)$, and a two-phase singular set $\mathrm{Sing}_2(\partial \widetilde\O)$,
   $$
   \partial \widetilde\O\cap D = \mathrm{Reg}(\partial \widetilde\O) \cup \mathrm{Sing}_1(\partial \widetilde\O)\cup \mathrm{Sing}_2(\partial \widetilde\O),
   $$
   where
   \begin{itemize}
        \item[(i)] $\mathrm{Reg}(\partial \widetilde\O)$ is a relatively open subset of $\partial\widetilde\Omega\cap D$ and a smooth manifold of dimension $(d-1)$. 
        \item[(ii)] $\mathrm{Sing}_1(\partial \widetilde\O)$ is a relatively closed subset of $\partial \widetilde\O \cap D$ that consists of all $x_0\in \partial\widetilde\Omega\cap D\setminus \mathrm{Reg}(\partial \widetilde\O)$ such that
        $$\liminf_{r\to0}\frac{|B_r(x_0)\cap\widetilde\Omega|}{|B_r|}<1.$$ 
        Moreover, if $d=2$, then $\text{\rm Sing}_1(\partial \widetilde\Omega)$ is empty, and if $d\ge 3$, then the Hausdorff dimension of $\text{\rm Sing}_1(\partial \widetilde\Omega)$ is at most $d-3$.
        \item[(iii)] $\mathrm{Sing}_2(\partial \widetilde\O)$ is a relatively closed subset of $\partial \widetilde\O \cap D$ that consists of all boundary points $x_0\in \partial \widetilde\O \cap D$ at which the Lebesgue density of $\widetilde\Omega$ is $1$.        
\end{itemize}
\end{theorem}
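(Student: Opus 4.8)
The plan is to reformulate \eqref{e:shape-opt-pb-for-lambda-k-in-D} as a (degenerate) vectorial Bernoulli problem for a suitably chosen vector of eigenfunctions and then transplant the blow-up and $\eps$-regularity machinery used for the sum functional (cf. \cref{t:main-reg-sum}), treating separately the two genuinely new phenomena: the possible non-openness of $\Omega$ and the appearance of density-one cusp points. First I would fix the representative $\widetilde\Omega$: by \cref{t:supersolution}(ii) one can select, in the degenerate case, a finite orthonormal family $u_1,\dots,u_m\in H^1_0(\Omega^*)$ of eigenfunctions with eigenvalues at most $\lambda_k(\Omega^*)$, all Lipschitz continuous on $\R^d$, and I would set $\widetilde\Omega=\Omega^*=\{|U|>0\}$ with $U=(u_1,\dots,u_m)$. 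The subtlety --- and the reason the statement only asserts regularity of a representative --- is that $\Omega$ may contain a positive-capacity set invisible to the selected eigenfunctions, which is exactly what forces the selection procedure of \cite{KriventsovLin2019:degenerate} (their Lemma~5.1 with the limiting argument of their Section~6); I would take this selection, together with the Lipschitz bound coming from the approximation by smooth functionals with degenerating coefficients \cite{BucurMazzoleniPratelliVelichkov:ARMA}, as given.

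Next I would derive the free boundary condition. Computing the inner and outer domain variations of $\lambda_k+\Lambda|\cdot|$ --- delicately, since $\lambda_k$ is neither differentiable nor, in general, simple --- yields positive coefficients $\xi_i$ and, on $\partial\widetilde\Omega\cap D$, the identity $\sum_{i=1}^m\xi_i|\nabla u_i|^2=\tfrac{2}{d}\lambda_k(\widetilde\Omega)=:\Lambda$. Setting $\overline U:=(\sqrt{\xi_1}\,u_1,\dots,\sqrt{\xi_m}\,u_m)$ this reads $|\nabla\overline U|^2=\Lambda$, and exactly as in the sum case I would upgrade it to the scalar viscosity condition $|\nabla|\overline U||=\sqrt\Lambda$ on $\partial\widetilde\Omega\cap D$, in the sense of \eqref{e:viso}. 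The decisive difference from the non-degenerate setting is that none of the $u_i$ need be signed, so no component is positive on $\widetilde\Omega$ and the exterior density estimate \eqref{e:exterior-density} fails; density-one points are therefore not excluded a priori.

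I would then run the blow-up analysis: Lipschitz continuity, non-degeneracy of $|\overline U|$, and a Weiss-type monotonicity formula give, at each $x_0\in\partial\widetilde\Omega\cap D$, a $1$-homogeneous stationary blow-up cone. I would decompose the free boundary by the Lebesgue density of $\widetilde\Omega$: $\mathrm{Reg}$ as the points admitting a half-plane (flat) blow-up \eqref{e:flat-solution} (necessarily of density $1/2$); $\mathrm{Sing}_1$ as the non-regular points of density strictly below $1$; and $\mathrm{Sing}_2$ as the density-one points, this last set being relatively closed by upper semicontinuity of the density along blow-ups. On $\mathrm{Reg}$ the domain is Reifenberg-flat, hence NTA, near $x_0$, so a boundary Harnack argument reduces the vectorial condition \eqref{e:viso} to a scalar one-phase viscosity condition $|\nabla w|=\sqrt\Lambda\,g$ with $g\in C^{0,\alpha}$ positive and $w$ a locally one-signed dominant component, and De Silva's $\eps$-regularity theorem \cite{DeSilva:FreeBdRegularityOnePhase}, followed by higher-order boundary Harnack, yields smoothness. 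For $\mathrm{Sing}_1$, since in the degenerate case blow-ups are only homogeneous \emph{stationary} solutions of the one-phase problem (Property E, not S, cf. \eqref{e:property-SE}), non-flat singular cones may appear already at $d=3$; a Federer dimension-reduction on the viscosity solution then gives $\dim_{\mathcal H}\mathrm{Sing}_1\le d-3$ and emptiness when $d=2$.

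The main obstacle is twofold. Analytically, the hardest input is the Lipschitz continuity and the \emph{selection} of the right eigenfunctions in the absence of differentiability of $\lambda_k$: this is what forces both the degenerating-coefficient approximation and the delicate capacity argument that produces $\widetilde\Omega$. Conceptually, the genuinely new difficulty is that the cusp locus $\mathrm{Sing}_2$ cannot be ruled out --- there is no exterior density estimate to lean on --- so the theorem can only describe $\mathrm{Sing}_2$ as a relatively closed set of density-one points, with no dimension control, consistently with the fact that the fine structure of the singular set of degenerate problems in $d>2$ remains open.
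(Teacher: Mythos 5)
Your overall architecture matches the paper's: select a good representative and a Lipschitz family of eigenfunctions, obtain a weighted viscosity condition $\sum_i\xi_i|\nabla u_i|^2=\Lambda$, run the blow-up/density decomposition, apply vectorial $\eps$-regularity on the regular part, Federer dimension reduction on $\mathrm{Sing}_1$ (with only the $d-3$ bound, since blow-ups are merely stationary), and accept $\mathrm{Sing}_2$ as an uncontrolled relatively closed set of density-one points. The one place where your write-up papers over the actual crux is the derivation of the coefficients $\xi_i$ and of the free boundary condition: you propose to ``compute the inner and outer domain variations of $\lambda_k+\Lambda|\cdot|$, delicately'', but $\lambda_k$ is not differentiable with respect to domain variations when the $k$-th eigenvalue is multiple, so there is no direct computation to perform. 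In the paper (following Kriventsov--Lin) everything is routed through the approximating non-degenerate functionals $F_p$ together with the fidelity term $\delta(A,\Omega)$ in \eqref{e:shape-opt-prob-aux-F-p}: the minimizers $\Omega_p$ have simple eigenvalues, hence an honest first-variation identity with coefficients $\partial_{\lambda_i}F_p(\Omega_p)$; the fidelity term forces $\Omega_p\to\widetilde\Omega$ with $\widetilde\Omega$ a representative of the \emph{given} $\Omega$; and the coefficients $\xi_i$, the index set $\mathcal I$ (which need not be all of $\{1,\dots,k\}$), the Weiss-type monotonicity formula, and the viscosity condition \eqref{e:fb-degenerate} are all obtained by passing these identities to the limit $p\to\infty$ on the Hausdorff limit $Z\supset\partial\widetilde\Omega$ of the boundaries $\partial\Omega_p$. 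Since you already invoke this limiting procedure for the selection of eigenfunctions, the fix is to acknowledge that the boundary condition and the monotonicity formula come from the same approximation, not from a variation of $\lambda_k$ itself; without that, the positivity and finiteness of the $\xi_i$ and the homogeneity of blow-ups are unsupported.

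A secondary, non-fatal point: for the regular part you use the NTA/boundary Harnack reduction to a scalar De Silva problem, which presupposes a locally one-signed dominant component near $\mathrm{Reg}$; this is the route of \cite{MazzoleniTerraciniVelichkov:AnalPDE} for minimizers of the vectorial functional, whereas \cite{KriventsovLin2019:degenerate} argues directly at the level of the viscosity condition for sign-changing components (as does the improvement-of-flatness scheme for $|U|$ of \cite{DeSilvaTortone2020:ViscousVectorialBernoulli}). Both routes are discussed in \cref{s:epsilon} and lead to the same conclusion, so this is a legitimate alternative rather than an error, but if you use it you should justify the existence of the signed component in the degenerate spectral setting rather than import it from the minimizing vectorial case.
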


The proof of \cref{thm:Kriventsov-Lin-lambda_k} relies on the following family of non-degenerate functionals $F_p$ that approximate $F(\lambda_1,\dots,\lambda_k)=\lambda_k$ as $p \to \infty$ (see \cite{BucurMazzoleniPratelliVelichkov:ARMA,RamosTavaresTerracini} for similar constructions)
$$
F_p(\lambda_1(A),\dots,\lambda_k(A)) := p \int_0^{1/p}(\tau_{k,p}+t)\,dt + \frac{1}{p}\sum_{i=1}^k (k+1-i)\lambda_i(A),
$$
where, for all $i=1,\dots,k$, we set 
$
\tau_{i,p}(A):=\left(\sum_{j=1}^i \lambda_j(A)^p\right)^{1/p}.
$

Given $p>1$, $s>0$, and a minimizer $\Omega$ of 
\eqref{e:shape-opt-pb-for-lambda-k-in-D}, we consider the minimization problem
\begin{equation}\label{e:shape-opt-prob-aux-F-p}
\min\{F_p(A) + s\delta(A,\Omega) + \Lambda |A|:\ A\subset D\},
\end{equation}
where $\delta(A,\Omega)$ is a fidelity term of the form 
$$
\delta(A,\Omega):= \int_A \max\{1,\mathrm{dist}(x,\O)\} \,dx +\int_{A^c}\max\{1,\mathrm{dist}(x,\O^c)\}\,dx  + \phi(|\O|-|A|),
$$
where $\phi \in C^\infty(\R)$ is such that $\phi(0)=\phi'(0)=0, \phi(t)>0$ if $t\neq 0$, $|\phi'|\leq 1/2$. \smallskip

Let now $\Omega_p$ be a minimizer of \eqref{e:shape-opt-prob-aux-F-p}. The choice of $F_p$ assures that the eigenvalues $(\lambda_i(\Omega_p))_{i=1}^k$ are simple and that the sets $\O_p$ satisfy Theorem \ref{t:subsolution} and Theorem \ref{t:supersolution} (see \cite{KriventsovLin2019:degenerate}).
Computing the first variations of the functional $F_p$ along vector fields compactly supported in $D$, we get that the following condition is satisfied (in sense of domain variations) on the free boundary of $\Omega_p$:
$$
\sum_{i=1}^k \partial_{\lambda_i} F_p(\O_p) |\nabla u_{i,p}|^2 = \Lambda + s \max\{1,\mathrm{dist}(x,\O)\} - s \max\{1,\mathrm{dist}(x,\O^c)\} + s\phi'(|\O|-|\O_p|),
$$
where 
$U_p=(u_{i,p},\dots,u_{k,p})$ is the vector of the first $k$ normalized eigenfunctions on $\O_p$. \medskip

The convergence of $\Omega_p$ to a minimizer $\widetilde \Omega$ of \eqref{e:shape-opt-pb-for-lambda-k-in-D} was proved in \cite{KriventsovLin2019:degenerate}.
The fidelity term $\delta(A,\Omega)$ assures that $\delta(\widetilde\Omega,\Omega)=0$, so $\widetilde\Omega$ is a representative of $\Omega$. Moreover, the sequence $\partial\Omega_p$ converges in the Hausdorff distance to a closed set $Z$, which contains $\partial\widetilde\Omega$ and can be decomposed as follows:
$$
Z:= \mathrm{Reg}(\partial \widetilde\O) \cup \mathrm{Sing}_1(Z) \cup \mathrm{Sing}_2(Z),
$$
where $\mathrm{Reg}$, $\mathrm{Sing}_1$ and $\mathrm{Sing}_2$ have the following properties:
\begin{enumerate}[label=(\roman*)]
    \item $\mathrm{Reg}(\partial \widetilde\O)$ is the reduced boundary $\partial^* \widetilde\O$ of $\partial \widetilde\O$, i.e., the set of points of Lebesgue density $1/2$ for which the blow-up limit of $\Omega$ is an half-space;
    \item $\mathrm{Sing}_1(Z)$ is the set of one-phase type singular points, that is those point of Lebesgue density $\gamma \in [1/2+\delta,1)$, for some $\delta>0$. This set coincides with the essential boundary of $\widetilde\O$ minus the reduced boundary;
    \item $\mathrm{Sing}_2(Z)$ is the set of point of Lebesgue density $1$.
\end{enumerate}
Afterwards, being $\partial\widetilde\O\subset Z$, we can replace $\mathrm{Sing}_i(Z)$ with $\mathrm{Sing}_i(\partial \O)$. 
At this stage, one can observe one of the main differences between the degenerate and non-degenerate cases: in the degenerate setting, the existence of two-phase singular points cannot be ruled out a priori, since the exterior density estimate \eqref{e:exterior-density} does not hold in general.\medskip

Thanks to the fact that the functionals $F_p$ are non-degenerate and thanks to the regularity theory for non-degenerate functionals, we know that the each of the sets $\Omega_p$ has the regularity described in \cref{t:main-reg-sum}. On the other hand, we cannot expect that the free boundary regularity of $\Omega_p$ passes to the limit; for instance, in 2D the free boundaries $\partial\Omega_p$ are smooth, while we know (see the previous section) that the limit set $\widetilde\Omega$ might have cusps. This is a common situation when dealing with higher eigenvalues, see for instance \cite{RamosTavaresTerracini,KriventsovLin2019:degenerate,MazzoleniTreyVelichkov:AnnIHP}; the key idea in all these papers is to pass to the limit the main tools needed for proving the regularity of the free boundary (first variation formulas and optimality conditions) and then work directly with $\widetilde\Omega$. \medskip

Following this general strategy, in \cite{KriventsovLin2019:degenerate} it was proved that there is a (non-empty) collection of Lipschitz continuous eigenfunctions $u_i\in H^1_0(\widetilde \Omega\setminus Z)$, $i\in \mathcal I\subset\{1,\dots,k\}$, such that:
\begin{itemize}
\item for all $i\in\mathcal I$, $\displaystyle\xi_i:=\lim_{p\to+\infty}\partial_{\lambda_i}F_p(\Omega_p)$ is finite and strictly positive;
\item $\displaystyle\lim_{p\to+\infty}\lambda_i(\O_p)= \lambda_{\max\{j\in \mathcal{I}: \xi_i=\xi_j\}}(\O)$ for all $i\in\mathcal I$;
\item the following Weiss-type formula is monotone non-decreasing for all boundary points $x_0$   
        $$
        W(x_0,r):= \frac{1}{r^d} \sum_{i\in\mathcal{I}} \int_{B_r(x_0)} \xi_i|\nabla u_i|^2 \,dx + \frac{1}{r^d}|\O\cap B_r(x_0)| - \frac{1}{r^{d+1}}\int_{\partial B_r(x_0)}\sum_{i \in \mathcal{I}}\xi_i u_i^2\,d\mathcal{H}^{n-1}.
        $$
\item the following boundary condition holds in viscosity sense (see \cite[Section 7]{KriventsovLin2019:degenerate}):
\be\label{e:fb-degenerate}
\sum_{i\in \mathcal{I}} \xi_i |\nabla u_{i}|^2 = \Lambda \qquad \mbox{on}\quad \mathrm{Reg}(\partial \O) \cup \mathrm{Sing}_1(Z).
\ee        
\end{itemize}
Thus, from the perspective of $\eps$-regularity for free boundaries, the problem becomes equivalent to studying $\eps$-regularity for free boundaries arising from the vectorial Bernoulli functional \eqref{e:vectorial-bernoulli-functional}, without imposing any sign conditions on the components of the minimizing vector $U$. We defer the discussion about the vectorial $\eps$-regularity theory to  \cref{s:epsilon}. The estimate on the Hausdorff dimension of $\mathrm{Sing}_1(\partial \O)$ follows from the classical Federer's dimension reduction argument and the fact that the blow-up limits of $U$ are stationary and $1$-homogeneous. Finally, for what concerns the two-phase singular set, we notice that the $(d-1)$-rectifiability of $\mathrm{Sing}_2(Z)$ can be obtained by the Naber-Valtorta's theory (see \cref{s:branching}). 
  
\section{The vectorial Bernoulli problem}\label{s:vectorial-bernoulli}
In this section we discuss the regularity theory for local minimizers of \eqref{e:vectorial-bernoulli-functional}.
Thus, given a smooth open domain $D\subset \R^d$, a constant $\Lambda >0$ and a boundary datum $G:=(g_1,\dots,g_k)\in H^{1/2}(D;\R^k)$, we consider the minimization problem
\be\label{e:minproblem-vectorial}\min\left\{\mathcal{J}(V,D): V\in H^1(D;\R^k),\, V=G \,\mbox{ on }\partial D\right\},
\ee
where $\mathcal{J}(V,D)$ is defined as in \eqref{e:vectorial-bernoulli-functional}. Then, the main regularity result is the following.
\begin{theorem}[\cite{SpolaorVelichkov:CPAM,MazzoleniTerraciniVelichkov:AnalPDE}]\label{thm:MazzoleniTerraciniVelichkov2}
Every minimizer $U\in H^1(D;\R^k)$ of \eqref{e:minproblem-vectorial} is Lipschitz continuous
in $D$ and its positivity set $\O_U:=\{|U|>0\}$ has locally finite perimeter in $D$. Moreover, the free boundary $\partial \O_U$ can be decomposed as the disjoint union of three parts
   $$
   \partial \O_U\cap D = \mathrm{Reg}(\partial \O_U) \cup \mathrm{Sing}_1(\partial \O_U)\cup \mathrm{Sing}_2(\partial \O_U),
   $$
   with the following properties:
   \begin{itemize}
    \item[(i)] the regular part $\mathrm{Reg}(\partial \O_U)$ is a smooth manifold of dimension $(d-1)$ and
    $$
    |\nabla |U||=\sqrt{\Lambda} \quad \mbox{on}\quad \mathrm{Reg}(\partial \O_U);
    $$
    \item[(ii)] the one-phase singular part $\mathrm{Sing}_1(\partial \O_U)$ is a closed subset of $\partial \O_U \cap D$. Moreover, 
    \begin{itemize}
\item[(1)] if $d<d^*$, then $\text{\rm Sing}_1(\partial \Omega_U)$ is empty;
\item[(2)] if $d\ge d^*$, then the Hausdorff dimension of $\text{\rm Sing}_1(\partial \Omega_U)$ is at most $d-d^*$,
\end{itemize}
where the dimensional threshold $d^\ast\in\{5,6,7\}$ is the first dimension in which homogeneous minimizers of the Alt-Caffarelli functional may exhibit isolated singularities. 
        \item[(iii)] the two-phase singular set $\mathrm{Sing}_2(\partial \O_U)$ is a closed and of locally finite $(d-1)$-Hausdorff measure. It consists of only those points at which the Lebesgue density of $\partial \O_U$ is $1$.        
\end{itemize}
\end{theorem}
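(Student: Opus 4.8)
The plan is to follow the Alt--Caffarelli scheme, exploiting the crucial reduction from \cite{MazzoleniTerraciniVelichkov:GAFA} that the vectorial free boundary condition is encoded by the scalar function $|U|$, so that the bulk of the analysis is transferred to the one-phase problem for $|U|$. First I would establish the Lipschitz continuity and non-degeneracy. Outer variations \eqref{e:outer0} give that each component $u_i$ is harmonic in $\O_U$, whence $\Delta|U|^2 = 2|\nabla U|^2\ge 0$ and, by Kato's inequality $|\nabla|U||\le|\nabla U|$, the norm $|U|$ is subharmonic. Comparing $U$ with its harmonic replacement on small balls centered on $\partial\O_U$ and using the volume penalization in $\mathcal{J}$ yields the linear growth bound $\sup_{B_r(x_0)}|U|\le Cr$; interior gradient estimates for harmonic functions then promote this to local Lipschitz continuity. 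A symmetric comparison gives the non-degeneracy $\sup_{B_r(x_0)}|U|\ge cr$, from which the interior density estimate $|\O_U\cap B_r(x_0)|\ge\eps_d|B_r|$ and, via a covering argument, the local finiteness of the perimeter of $\O_U$ follow.

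Next I would set up the blow-up analysis. Using the stationarity of $U$ under both outer and inner variations (cf.\ \eqref{e:inner2}) I would prove a Weiss-type monotonicity formula for
$$W(x_0,r):=\frac{1}{r^d}\int_{B_r(x_0)}|\nabla U|^2\,dx+\Lambda\frac{|\O_U\cap B_r(x_0)|}{r^d}-\frac{1}{r^{d+1}}\int_{\partial B_r(x_0)}|U|^2\,d\HH^{d-1},$$
which forces every blow-up limit $U_0$ of the rescalings $U_{x_0,r}$ to be $1$-homogeneous and a local minimizer of $\mathcal{J}$. The key structural step, following \cite{MazzoleniTerraciniVelichkov:GAFA}, is to show that each blow-up has the form $U_0=|U_0|\,\xi$ for a fixed $\xi\in\partial B_1\subset\R^k$; consequently $|U_0|$ is a $1$-homogeneous local minimizer of the scalar Alt--Caffarelli functional \eqref{e:ac-functional}, and the viscosity identity $|\nabla|U||=\sqrt{\Lambda}$ holds on $\partial\O_U$.

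With this in hand I would decompose $\partial\O_U$. I declare $\mathrm{Sing}_2(\partial\O_U)$ to be the set of points of Lebesgue density $1$ for $\O_U$; at every point of the complement the exterior density estimate holds, so the scalar one-phase machinery applies to $|U|$. Among these, $\mathrm{Reg}(\partial\O_U)$ is the set admitting a half-plane blow-up $\sqrt{\Lambda}(x\cdot\nu)^+\xi$, and $\mathrm{Sing}_1(\partial\O_U)$ is the remainder. For the regular part I would invoke the viscosity formulation: either construct an NTA neighborhood and combine the boundary Harnack principle with De Silva's $\eps$-regularity \cite{DeSilva:FreeBdRegularityOnePhase}, as in \cite{MazzoleniTerraciniVelichkov:AnalPDE}, or run the improvement-of-flatness scheme of \cite{DeSilvaTortone2020:ViscousVectorialBernoulli}; either route gives $C^{1,\alpha}$, and then $C^\infty$ by Kinderlehrer--Nirenberg. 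The dimension bound on $\mathrm{Sing}_1$ follows from Federer's dimension reduction applied to $|U|$, reducing it to the critical dimension $d^*$ of the scalar problem.

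The main obstacle is the two-phase singular set $\mathrm{Sing}_2$, namely the density-$1$ cusp points that are absent in the non-degenerate theory. Since these points lie outside the reduced boundary, the local finiteness of the perimeter of $\O_U$ does \emph{not} directly bound $\HH^{d-1}(\mathrm{Sing}_2)$. I expect that controlling this set requires quantitative-stratification and Naber--Valtorta techniques (cf.\ the discussion in \cref{s:branching}) to obtain the $(d-1)$-rectifiability together with the uniform $\HH^{d-1}$-measure bound; the closedness of $\mathrm{Sing}_2$ would follow from the upper semicontinuity of the density under the uniform growth and non-degeneracy estimates. This is the delicate heart of the statement, precisely because it is where the sign-changing vectorial problem genuinely departs from its scalar counterpart.
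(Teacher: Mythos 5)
Your architecture is the same as the paper's: almost-minimality of the components giving Lipschitz continuity, non-degeneracy, the interior density estimate and the locally finite perimeter; a Weiss-type monotonicity formula forcing homogeneous blow-ups; the density-based decomposition into $\mathrm{Reg}$, $\mathrm{Sing}_1$, $\mathrm{Sing}_2$; boundary Harnack plus De Silva (or improvement of flatness) for the regular part; Federer dimension reduction for $\mathrm{Sing}_1$; and Naber--Valtorta-type arguments for $\mathrm{Sing}_2$. There is, however, one step stated in a form that is false in this degenerate setting and needs repair. You claim that \emph{every} blow-up limit has the form $U_0=|U_0|\,\xi$ with $\xi\in\partial B_1\subset\R^k$, and you deduce from this that $|\nabla|U||=\sqrt{\Lambda}$ holds in viscosity sense on all of $\partial\O_U$. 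Since $|U_0|\ge 0$, a blow-up of that form cannot change sign along any direction of $\R^k$; but at the two-phase points (Lebesgue density $1$) the blow-ups are linear maps $U_0(x)=Ax$, possibly of rank larger than one, which do change sign and for which the gradient condition is only the one-sided inequality $|A|^2\ge C_d\Lambda$. This is precisely the feature that distinguishes \cref{thm:MazzoleniTerraciniVelichkov2} from the non-degenerate case of \cite{MazzoleniTerraciniVelichkov:GAFA}, where the sign assumption on one component rules such points out. The correct statement is the dichotomy: if the density of $\O_U$ at $x_0$ is strictly less than $1$, then $U_0=u_0\xi$ with $u_0$ a one-homogeneous Alt--Caffarelli minimizer; if the density equals $1$, then $U_0$ is linear. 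The viscosity condition $|\nabla|U||=\sqrt{\Lambda}$ is then available only on $\mathrm{Reg}(\partial\O_U)\cup\mathrm{Sing}_1(\partial\O_U)$, or globally only in the one-sided sense of \cref{d:viscos}. Your later paragraph, where you carve out the density-$1$ set before applying the scalar one-phase machinery, already contains the fix; the dichotomy must simply be proved \emph{before} asserting the scalar reduction, not after.

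Everything else is consistent with the paper's proof. In particular your diagnosis of $\mathrm{Sing}_2$ is accurate: the locally finite perimeter of $\O_U$ does not control $\HH^{d-1}$ of the density-$1$ set, closedness follows from upper semicontinuity of the density along the monotone Weiss energy, and the $(d-1)$-finiteness and rectifiability are obtained from the rank stratification $\mathcal S_j$ of the linear blow-ups combined with Naber--Valtorta techniques, exactly as outlined in \cref{s:branching}.
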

\subsection{Qualitative properties of minimizers} 
First, we notice that every component $u_i$ of a minimizer $U = (u_1, \dots , u_k)$ is an almost-minimizer of the Dirichlet energy, in the sense that 
$$
\int_{B_r(x_0)}|\nabla u_i|^2\,dx \leq \int_{B_r(x_0)}|\nabla \widetilde{u}_i|^2\,dx  + \Lambda |B_r|,\quad \mbox{for every }\widetilde{u}\in u+H^1_0(B_r(x_0)),
$$
for every $B_r(x_0)\subset D$ and every $\widetilde{u}\in u+H^1_0(B_r(x_0))$. This almost minimality implies that 
$U = (u_1, \dots , u_k)$ satisfies the following properties:
\begin{enumerate}[label=(\roman*)]
    \item {\it Lipschitz continuity \cite{BucurMazzoleniPratelliVelichkov:ARMA,Trey:COCV}.} $U$ is Lipschitz continuous in $D$.
    \item {\it Non-degeneracy \cite{KriventsovLin2019:degenerate,Trey:IHP}.} There exist $C_d,r_d$ positive, such that 
$$
\norm{U}{L^\infty(B_r(x_0))}\geq C_d r,\quad\mbox{for every}\quad x_0 \in \partial \O_U\quad\mbox{and}\quad r<r_d.
$$
\item {\it Interior density estimate \cite{MazzoleniTerraciniVelichkov:GAFA}.} There are constant $\eps_d, r_d$ positive such that 
$$
|\O_u\cap B_r(x_0)|\geq \eps_d |B_r|,\quad\mbox{for every}\quad x_0 \in \partial \O_U\quad\mbox{and}\quad r<r_d.
$$
\item {\it Perimeter of the positivity set \cite{MazzoleniTerraciniVelichkov:AnalPDE}.} $\O_u$ has locally finite perimeter in $D$.
\end{enumerate}
We notice that, due to the presence of two-phase singular points, we cannot hope to have an exterior density estimate of the form \eqref{e:exterior-density}. \medskip

\subsection{Monotonicity formula and blow-up analysis} A direct computation (see \cite{MazzoleniTerraciniVelichkov:GAFA,MazzoleniTerraciniVelichkov:AnalPDE}) gives that, for every $x_0\in \partial \O_U\cap D$, the Weiss' boundary adjusted energy
$$
W(x_0,r):=\frac{1}{r^d}J(U,B_{r}(x_0)) - \frac{1}{r^{d+1}}\int_{\partial B_r(x_0)}|U|^2\,d\mathcal{H}^{n-1} 
$$
is non-decreasing in $r$ (for $r$ small enough) and is constant if and only if $U$ is $1$-homogeneous with respect to $x_0$. Next we consider the $1$-homogeneous rescalings $U_{r,x_0}(x)=\frac1rU(x_0+rx)$ and we notice that, up to a subsequence, $U_{r_n,x_0}$ converges locally uniformly (uniformly on every compact subset of $\R^d$) to a non-trivial blow-up limit $$U_0:\R^d\to\R,$$ 
which is a Lipschitz continuous and $1$-homogeneous function that minimizes \eqref{e:vectorial-bernoulli-functional} in every compact subset of $\R^d$ (we will say that $U_0$ is a {\it global minimizer}). Moreover, the Lebesgue density of $\Omega_U$ at $x_0$ and the Lebesgue density of $\Omega_{U_0}$ at $0$ coincide. In particular, even if the blow-up $U_0$ might a priori depend on the sequence $r_n$, the Lebesgue density of $\Omega_{U_0}$ (at $0$) depends only on the point $x_0$. We distinguish between the following cases:
\begin{enumerate}
\item[(i)] {\it The Lebesgue density of $\O_U$ at $x_0$ is strictly smaller than $1$.} Then, there exists a unitary vector $\xi \in \R^k$ such that every blow-up $U_0$ of $U$ at $x_0$ is of the form
$$
U_0(x)=u_0(x)\xi\qquad \mbox{in }\R^d,
$$
for some scalar function $u_0$, which is a $1$-homogeneous minimizer of the Alt-Caffarelli functional. Moreover,\smallskip 
\begin{enumerate}
    \item if the Lebesgue density of $\O_U$ at $x_0$ is $1/2$, then 
    $$
    u_0(x) = \sqrt{\Lambda}(x\cdot \nu)^+,\quad\mbox{for some}\quad \nu\in\R^d,|\nu|=1;
    $$
    in this case we will say that $x_0$ is a regular point and we will write $x_0\in \mathrm{Reg}(\partial \O_U)$;\smallskip
    \item if the Lebesgue density of $\O_U$ at $x_0$ is in the open interval $(1/2,1)$, then the origin is a singular point for the one-phase homogeneous global minimizer $u_0$. In this case we will say that $x_0$ is a {\it one-phase singular point} and we will write $x_0\in\mathrm{Sing}_1(\partial \O_U)$.\medskip
\end{enumerate}
    \item[(ii)] {\it The Lebesgue density of $\O_U$ at $x_0$ is $1$.} Then, every blow-up limit $U_0$, of $U$ at $x_0$, is a linear function: $U_0(x) = Ax$ for some $A \in \R^{d\times k}$. In this case we will say that $x_0$ is a two-phase singular point; we will denote the set of these points by $\mathrm{Sing}_2(\partial \O_U)$.
\end{enumerate}
\subsection{The $\eps$-regularity theory}\label{s:epsilon} Up to this point, we know that the regular part of the free boundary $\mathrm{Reg}(\partial \O_U)$ coincides with the points of density $\sfrac12$ and can defined as the collection of points in which, up to a subsequence, the blow-ups converge to the limit
$$
U_0(x) = \sqrt{\Lambda}(x\cdot \nu)^+\xi\quad\mbox{for some}\quad \nu\in\R^d,|\nu|=1,
$$
and $\xi \in \R^k$ is a unitary vector. Since the free boundary of the blow-up limit is a hyperplane 
$$\O_{U_0} = \{x\in \R^d\colon x\cdot \nu >0\}\quad\text{and}\quad\partial \O_{U_0} = \{x\in \R^d\colon x\cdot \nu =0\},$$ 
it is natural to call such solution \emph{half-plane solutions} or \emph{flat solutions}.\\
There are several alternative approaches to the study of the regular points. In \cite{CaffarelliShahgholianYeressian2018:vectorial,MazzoleniTerraciniVelichkov:GAFA}, the problem was studied under the assumption that at least one of the components, say $u_1$, is non-negative in $D$. In both \cite{CaffarelliShahgholianYeressian2018:vectorial,MazzoleniTerraciniVelichkov:GAFA} this information was used to show that $\Omega_U=\Omega_{u_1}$ and that $u_1$ is a solution of a one-phase free boundary problem of the form 
\begin{equation}\label{e:one-phase-free-boundary-pb-reduction}
\Delta u_1=0\quad\text{in}\quad\Omega_{u_1}\ ,\qquad |\nabla u_1| =g\quad\text{in}\quad\partial\Omega_{u_1}\cap D,
\end{equation}
for a function $g$ which is H\"older continuous and such that $g\ge c>0$ on $\partial\Omega_{u_1}\cap D$. The regularity of $\rm Reg(\partial\Omega_U)$ is then obtained from the De Silva's result \cite{DeSilva:FreeBdRegularityOnePhase}. In both \cite{CaffarelliShahgholianYeressian2018:vectorial,MazzoleniTerraciniVelichkov:GAFA}, the validity of \eqref{e:one-phase-free-boundary-pb-reduction} is obtained by showing that the harmonic functions on the domain $\Omega_U$ satisfy the Boundary Harnack Principle. In \cite{MazzoleniTerraciniVelichkov:GAFA} this was done by first proving that $\Omega_U$ is an NTA domain (with arguments based on the Weiss' monotonicity formula) and then applying the theory on NTA domains. Later, in \cite{MazzoleniTerraciniVelichkov:AnalPDE}, it was shown that there always exists a component of the minimizing vector $U$ that does not change sign in a neighborhood of $\mathrm{Reg}(\partial \O_U)$, which competed the proof in higher dimension. Finally, we notice that the validity of the Boundary Harnack Principle on $\Omega_U$ can also be obtained directly (see \cite{MaialeTortoneVelichkov:AnnSNS} and \cite{CaffarelliShahgholianYeressian2018:vectorial}).\medskip

Another, more direct, approach to the regularity of $\rm Reg(\partial\Omega_U)$ was developed in the works of Kriventsov and Lin \cite{KriventsovLin2018:nondegenerate,KriventsovLin2019:degenerate}, where the $C^{1,\alpha}$ regularity of $\rm Reg(\partial\Omega_U)$ is obtained directly for vector-valued functions $U$ satisfying a particular viscosity condition on the free boundary.  The free boundary condition in the vectorial setting can often be expressed in several different ways. We will next focus on the viscosity formulation \eqref{e:viso}, which was introduced in \cite{MazzoleniTerraciniVelichkov:GAFA}. 

\begin{definition}[Vectorial viscosity solutions]\label{d:viscos}
Let $U\colon D \to \mathbb{R}^k$ be a continuous vector-valued function. We say that $U$ is a viscosity solution to 
\be\label{VOP}
\Delta U = 0 \quad\mbox{in }\O_U\cap B_1,\qquad 
\mbox{and}\qquad 
|\nabla |U||=\sqrt{\Lambda}\quad\text{on }\partial \O_U\cap B_1\,,
\ee
if each component of $U$ is harmonic in $\Omega_{U}=\{|U|>0\}$
and if for every test function $\varphi \in C^2(D)$, the following holds:
\begin{enumerate}
\item[(i)] if $\varphi_+$ touches $|U|$ from below at some $x_0\in\partial\Omega_U\cap D$ and $|\nabla\varphi(x_0)|\neq 0$, then $|\nabla \varphi(x_0)| \le 1$;
\item[(ii)] if $\varphi_+$ touches $|U|$ from above at some $x_0\in\partial\Omega_U\cap D$, then $|\nabla \varphi(x_0)| \ge 1$.
\end{enumerate}
\end{definition}

An epsilon-regularity theory for vectorial viscosity solutions in the sense of \cref{d:viscos} was developed in \cite{DeSilvaTortone2020:ViscousVectorialBernoulli}. We notice that in \cite{DeSilvaTortone2020:ViscousVectorialBernoulli} the condition (i) was replaced by 
\begin{enumerate}
\item[(i')] if, for some unit direction $\xi \in \R^k$, $\varphi_+$ touches $(U\cdot \xi)$ from below at some $x_0\in\partial\Omega_U\cap D$ and $|\nabla\varphi(x_0)|\neq 0$, then $|\nabla \varphi(x_0)| \le 1$.
\end{enumerate}
The class of vector-valued functions satisfying the free boundary condition (i') contains the ones satisfying the free boundary condition (i) from \cref{d:viscos}. Thus, the theory developed in \cite{DeSilvaTortone2020:ViscousVectorialBernoulli} covers a more general class of solutions.

\begin{theorem}[\cite{DeSilvaTortone2020:ViscousVectorialBernoulli}]
Let $U$ be a viscosity solution to \eqref{VOP}, where the boundary condition is satisfied in the sense of (i') and (ii). Then, there exists a $\bar\eps >0$ universal such that if $U$ is $\bar \eps$-flat in $B_1$, that is 
\be \label{flat} 
|U(x) - \sqrt{\Lambda}(x\cdot \nu)^+ \xi| \leq \bar \eps \quad \text{in}\quad B_1,
\ee
for some unit directions $\nu \in \R^d, \xi\in \R^k$, and if\be\label{nondegenra}
 |U| \equiv 0 \quad \text{in}\quad\{x \in B_1\colon (x \cdot \nu)  < - \bar \eps\},
 \ee
then $\partial \O_U \cap B_{\sfrac12}$ is the graph of a $C^{1,\alpha}$-function in the direction $\nu$.\end{theorem}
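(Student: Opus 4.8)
The plan is to run De~Silva's improvement-of-flatness scheme \cite{DeSilva:FreeBdRegularityOnePhase}, but carried out on the \emph{scalar harmonic quantities} $U\cdot\eta$ rather than on $|U|$, which is only subharmonic. After normalizing $\sqrt{\Lambda}=1$, the central object is the leading component $U\cdot\xi$: it is harmonic in $\O_U$ and, by the flatness hypothesis \eqref{flat}, stays $\bar\eps$-close to the half-plane profile $(x\cdot\nu)^+$. The transversal components $U\cdot\eta$ with $\eta\perp\xi$ are likewise harmonic, of size $O(\bar\eps)$, and \emph{vanish on} $\partial\O_U$ since $U=0$ there. The one-sidedness hypothesis \eqref{nondegenra} forces $\O_U$ to sit on the positive side of $\{x\cdot\nu=0\}$, ruling out cusps and allowing us to treat the problem as a genuine one-phase free boundary problem for $U\cdot\xi$, with the viscosity conditions (i') and (ii) playing the roles of the sub- and supersolution conditions.

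The engine of the argument is a \emph{partial Harnack inequality} for viscosity solutions. Flatness provides, in a ball $B_r$, a trapping of the form $(x\cdot\nu+a)^+\le U\cdot\xi\le|U|\le(x\cdot\nu+b)^+$ with oscillation $b-a$ small compared to $r$; I would show that in a smaller concentric ball one of the two bounds can be improved by a universal factor, so that the new oscillation is at most $(1-c)(b-a)$. The proof compares $U\cdot\xi$ with explicit barriers obtained as small radial perturbations of the half-plane solution, which are strict sub/supersolutions of the viscosity problem: harmonicity of $U\cdot\xi$ inside $\O_U$ together with the viscosity conditions (i') and (ii) prevents a barrier from touching, forcing the improvement. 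Iterating this inequality yields uniform interior H\"older estimates for the normalized height of $U\cdot\xi$, hence the compactness needed below.

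With the Harnack estimate in hand, improvement of flatness follows by a compactness/contradiction argument. Suppose it fails: pick $\eps_k\downarrow 0$ and solutions $U_k$ that are $\eps_k$-flat but do not improve. Consider the normalized heights $w_k:=\eps_k^{-1}\big(U_k\cdot\xi-(x\cdot\nu)^+\big)$ together with the normalized transversal components $\eps_k^{-1}\,U_k\cdot\eta$. The partial Harnack inequality gives equicontinuity, so (as graphs over the half-space) the $w_k$ converge uniformly to a limit $w$; passing the viscosity conditions to the limit shows that $w$ is harmonic in $B_{1/2}\cap\{x\cdot\nu>0\}$ and satisfies the linearized Neumann condition $\partial_\nu w=0$ on $B_{1/2}\cap\{x\cdot\nu=0\}$. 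The decisive vectorial point is that, since each $U_k\cdot\eta$ vanishes on $\partial\O_{U_k}$, its normalized limit is harmonic with \emph{zero} boundary data and enters the identity $|\nabla|U||^2=|\nabla(U\cdot\xi)|^2+\sum_{\eta}|\nabla(U\cdot\eta)|^2$ only quadratically; hence the transversal components do not perturb the linearized free boundary condition, which therefore reduces to the scalar one. Standard regularity for the Neumann problem then gives $w\in C^\infty$ and an affine approximation whose tangential gradient provides an updated direction $\nu_1$ with $|\nu_1-\nu|\le C\eps_k$, contradicting the assumed failure.

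Finally, iterating the improvement-of-flatness lemma at the dyadic scales $\rho^n$ produces a geometrically decaying flatness and a sequence of normals $\nu_n$ with $|\nu_{n+1}-\nu_n|\le C\rho^{n\alpha}$, so that $\partial\O_U\cap B_{1/2}$ is trapped in ever-thinner two-sided cones and is the graph of a $C^{1,\alpha}$ function in the direction $\nu$. I expect the main obstacle to be the partial Harnack inequality in the vectorial framework, together with the verification that the transversal components are genuinely negligible at leading order, i.e.\ that the limiting problem decouples into the scalar Neumann problem rather than a coupled system. This is precisely where the one-sidedness hypothesis \eqref{nondegenra} and the use of the harmonic quantities $U\cdot\eta$ (in place of the non-harmonic $|U|$) are essential.
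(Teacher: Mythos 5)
Your overall architecture is the right one and matches the scheme of \cite{DeSilvaTortone2020:ViscousVectorialBernoulli}: trap the solution between translates of the half-plane profile, prove a partial Harnack inequality by barrier comparison, extract compactness of the normalized heights, identify the linearized problem as the scalar Neumann problem $\Delta w=0$ in $\{x\cdot\nu>0\}$, $\partial_\nu w=0$ on $\{x\cdot\nu=0\}$, and iterate. Your treatment of the transversal components is also correct and is exactly the mechanism used in the paper: since each $U\cdot\eta$ is harmonic, vanishes on $\partial\O_U$ and is $O(\bar\eps)$ in size, one gets $|U\cdot\eta|\le C\bar\eps\,(x\cdot\nu+\bar\eps)^+$, so these components enter $|U|$ only quadratically and the linearization decouples (this is the content of the expansion $V_\eps=(\tilde U_\eps\cdot\xi_1)+o(\eps^{3/2})$).

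There is, however, a genuine gap in the partial Harnack step as you describe it. You propose to run the whole comparison on the harmonic scalar $U\cdot\xi$ and to avoid $|U|$ because it is ``only subharmonic.'' But the subsolution condition (ii) is a statement about test functions touching $|U|$ from above; it gives no information when a barrier touches $U\cdot\xi$ from above at a free boundary point, and $U\cdot\xi$ is \emph{not} a viscosity subsolution of the scalar one-phase problem (a priori $U\cdot\xi\le|U|$, so an upper contact with $U\cdot\xi$ need not be an upper contact with $|U|$). Consequently the ``improve the upper bound'' half of your dichotomy has no engine. The correct bookkeeping --- and the one actually used in the paper --- is asymmetric: the pair $(u_1,|U|)$ with $u_1:=U\cdot\xi$ satisfies
\[
\Delta u_1=0 \ \text{ in }\O_U, \quad |\nabla u_1|\le\sqrt\Lambda \ \text{ on }\partial\O_U
\qquad\text{and}\qquad
\Delta|U|\ge0 \ \text{ in }\O_U, \quad |\nabla|U||=\sqrt\Lambda \ \text{ on }\partial\O_U,
\]
so $u_1$ is the supersolution used to improve the trapping \emph{from below} via (i'), while $|U|$ is the subsolution used to improve it \emph{from above} via (ii); subharmonicity of $|U|$ is precisely the one-sided property one wants for a comparison from above, not a defect. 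The dichotomy is then ``either the flatness of $u_1$ improves from below or the flatness of $|U|$ improves from above,'' and the two normalized limits are reconciled at the linearized level by the quadratic smallness of the transversal components that you already identified. Your argument can be repaired by reinstating $|U|$ in the upper comparison (or by quantifying $|U|-U\cdot\xi=O(\bar\eps^2)$ up to the free boundary before invoking (ii)), but as written the upper barrier step would fail.
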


Naturally, the key-lemma is the validity of an improvement of flatness
(see \cite[Section 8]{Velichkov:BookRegularityOnePhaseFreeBd} for a complete proof of how the improvement of flatness implies the $C^{1,\alpha}$-regularity of the free boundary): roughly speaking, if $0\in \partial \O_U$ and $U$ is $\eps$-flat in $B_1$, then there exists a universal scaling factor $r_0>0$ such that $U_{0,r_0}$ is $\eps/2$-flat in $B_1$ (here we use the notation for blow-up sequences \eqref{e:blow-up-sequence}). By iterating this argument, one can establish the existence of a tangent hyperplane at every point of $\mathrm{Reg}(\partial \O_U)$, and subsequently deduce the H\"{o}lder continuity of the normal vector.
\begin{lemma}[Improvement of Flatness, \cite{DeSilvaTortone2020:ViscousVectorialBernoulli}]\label{IMPF} Let $U$ be an $\eps$-flat viscosity solution to \eqref{VOP}, the boundary condition being satisfied in the sense of (i') and (ii). Let $0 \in \partial \O_U$. 
Assume that
\be\label{flat1}  
|U- \sqrt{\Lambda}(x_d)^+ \xi_1| \leq \eps \quad \text{in}\quad B_1,
\ee
where $\xi_1$ is the first element of the canonical basis in $\R^k$, and
\be\label{non_d1} 
|U| \equiv 0 \quad \text{in}\quad \{ x \in B_1 \colon \cap x_d < - \eps\}.
\ee
Then, there exist $r_0,\eps_0>0$ universal such that, if $\eps\in (0,\eps_0]$
\be\label{flat_imp}
|U - \sqrt{\Lambda}(x\cdot  \nu)^+ \xi| \leq \frac{\eps}{2}{r_0}\quad \text{in $B_{r_0}$},
\ee
and
\be\label{non_dimpr} |U| \equiv 0 \quad \text{in $\left\{x \in B_{r_0} \colon  (x \cdot \nu) < - \frac{\eps}{2}{r_0}\right\}$}, \ee
with $|\nu -e_d|\leq C\eps, |\xi-\xi_1| \leq C \eps$, for a universal constant $C>0.$
\end{lemma}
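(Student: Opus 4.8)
The plan is to run De Silva's compactness scheme for the one-phase problem, treating the scalar norm $|U|$ as the object carrying the Bernoulli condition and the transversal components as carriers of the internal direction, and to argue by contradiction.

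\textbf{Contradiction setup.} Fix the (universal) radius $r_0$, to be selected at the very end from the regularity of the limiting problem. If the conclusion fails, there are $\eps_n\downarrow 0$ and viscosity solutions $U_n$ of \eqref{VOP}, in the sense of (i$'$) and (ii), each with $0\in\partial\O_{U_n}$ and satisfying \eqref{flat1}, \eqref{non_d1} with $\eps=\eps_n$, but for which no pair $(\nu,\xi)$ realizes \eqref{flat_imp}--\eqref{non_dimpr}. Write $w_n:=|U_n|$ for the norm; by \eqref{flat1} each component $u_{i,n}$ is harmonic in $\O_{U_n}$ with $|u_{i,n}|\le \eps_n$ for $i\ge 2$, while $w_n$ is $\eps_n$-close to the half-plane solution $\sqrt\Lambda (x_d)^+$. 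Heuristically $w_n$ controls the tilt of the free boundary (hence $\nu$), whereas $u_{2,n},\dots,u_{k,n}$ control the rotation of $\xi_1$ (hence $\xi$).

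\textbf{Compactness.} The engine is the Partial Harnack inequality for viscosity solutions: since $|\nabla|U||=\sqrt\Lambda$, the norm $w_n$ behaves like a scalar one-phase solution, and Partial Harnack yields an improvement of oscillation at every intermediate scale, hence equicontinuity of the normalized functions
\[
\tilde w_n:=\frac{w_n-\sqrt\Lambda (x_d)^+}{\eps_n},\qquad \tilde u_{i,n}:=\frac{u_{i,n}}{\eps_n}\ \ (i\ge 2),
\]
restricted to $\O_{U_n}$. Along a subsequence the graphs of $\tilde w_n$ over $\O_{U_n}\cap B_{1/2}$ converge in Hausdorff distance to the graph of a H\"older function $\tilde w$ on $\{x_d\ge 0\}\cap B_{1/2}$; interior gradient estimates give $\tilde u_{i,n}\to \tilde u_i$ locally uniformly in $\{x_d>0\}$ with $\tilde u_i$ harmonic. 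The nondegeneracy \eqref{non_d1} confines $\partial\O_{U_n}$ to the strip $|x_d|\le \eps_n$, which is exactly what makes the limit live over the flat half-space.

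\textbf{Linearized problem, regularity, and conclusion.} The crux is to show that $\tilde w$ solves the Neumann problem
\[
\Delta\tilde w=0 \ \text{ in } \{x_d>0\}\cap B_{1/2},\qquad \partial_{x_d}\tilde w=0 \ \text{ on } \{x_d=0\}\cap B_{1/2},
\]
while each transversal limit satisfies the Dirichlet condition $\tilde u_i=0$ on $\{x_d=0\}$, the latter because $U_n\equiv 0$ on the free boundary forces $|u_{i,n}|=O(\eps_n^2)$ there (via boundary Harnack $u_{i,n}=g_{i,n}u_{1,n}$ with $g_{i,n}=O(\eps_n)$). The interior equations pass to the limit from harmonicity; the boundary conditions are obtained by the usual viscosity-to-limit mechanism: a smooth $\varphi$ touching $\tilde w$ from one side at a point of $\{x_d=0\}$ lifts to a test function touching $w_n$ at a free boundary point, and (i$'$), (ii) force the sign of $|\nabla\varphi|-\sqrt\Lambda$, which degenerates to the Neumann condition as $\eps_n\to 0$. \textbf{This lifting step is the main obstacle}: one must control the coupling between $w_n$ and the transversal components when transplanting test functions, which is precisely why (i$'$) is stated for the scalar products $U_n\cdot\xi$ rather than for $|U_n|$ alone. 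Once the limiting problem is identified, Schauder estimates (with even reflection across $\{x_d=0\}$ for the Neumann part and odd reflection for the Dirichlet part) give $\tilde w,\tilde u_i\in C^\infty$ near $0$ with $\tilde w(0)=\tilde u_i(0)=0$ and $\partial_{x_d}\tilde w(0)=0$, whence
\[
|\tilde w(x)-\nabla'\tilde w(0)\cdot x'|\le C|x|^2,\qquad |\tilde u_i(x)-\partial_{x_d}\tilde u_i(0)\,x_d|\le C|x|^2,
\]
with $\nabla'$ the tangential gradient. Reading these expansions back at scale $r_0$ identifies the improved directions: $\nu$ as the unit vector proportional to $e_d+\tfrac{\eps_n}{\sqrt\Lambda}\nabla'\tilde w(0)$ and $\xi$ as the unit vector proportional to $\xi_1+\tfrac{\eps_n}{\sqrt\Lambda}\sum_{i\ge 2}\partial_{x_d}\tilde u_i(0)\,e_i$, both at distance $O(\eps_n)$ from $e_d$ and $\xi_1$. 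Choosing $r_0$ universal with $Cr_0\le \tfrac12$ makes $U_n$ flat at level $\tfrac{\eps_n}{2}r_0$ in $B_{r_0}$ with directions $(\nu,\xi)$, contradicting the failure of \eqref{flat_imp}--\eqref{non_dimpr} and proving the lemma.
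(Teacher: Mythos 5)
Your overall architecture (contradiction, Harnack-based compactness, linearization to a Neumann/Dirichlet problem in the half-space, regularity of the limit, then read off $\nu$ and $\xi$) is the same De Silva scheme that the paper follows. But there is a genuine gap at the very first step, and it is the point on which the whole argument turns. You treat $w_n=|U_n|$ as a two-sided viscosity solution of the scalar one-phase problem (``since $|\nabla|U||=\sqrt\Lambda$, the norm $w_n$ behaves like a scalar one-phase solution'') and run the Partial Harnack inequality on it. Under the hypotheses of the lemma, namely (i$'$) and (ii), this is not available: condition (ii) gives the \emph{subsolution} half of the free boundary condition for $|U|$ (test functions touching from above have gradient $\ge\sqrt\Lambda$), but the \emph{supersolution} half (i$'$) is stated only for the scalar projections $U\cdot\xi$. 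Since $U\cdot\xi\le|U|$, a test function touching $|U|$ from below need not touch any $U\cdot\xi$ from below, so the supersolution property does not transfer to the norm. Without it, neither the Partial Harnack step nor the identification of the Neumann condition $\partial_{x_d}\tilde w=0$ from below is justified. You do flag that ``(i$'$) is stated for the scalar products $U\cdot\xi$ rather than for $|U|$ alone'' as the main obstacle, but you locate it in the lifting of test functions for the linearized limit and leave it unresolved; in fact it already blocks the compactness step.

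The paper (following \cite{DeSilvaTortone2020:ViscousVectorialBernoulli}) resolves exactly this by working with the \emph{pair} $(u_1,|U|)$ rather than with $|U|$ alone: $u_1=U\cdot\xi_1$ is harmonic with $|\nabla u_1|\le\sqrt\Lambda$ on $\partial\O_U$ (a supersolution, controlling the free boundary from below), while $|U|$ is subharmonic with the touching-from-above condition (a subsolution, controlling it from above). The flatness hypotheses \eqref{flat1}--\eqref{non_d1} give $|u_i|\le C\eps(x_d+\eps)^+$ for $i\ge2$ and hence $\sqrt\Lambda(x_d-\eps)\le u_1\le|U|\le\sqrt\Lambda(x_d+2\eps)^+$ with $|U|-u_1=O(\eps^2)$ in the relevant region, so the two functions share the same linearization. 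The Harnack/dichotomy step is then: either the flatness of $u_1$ improves from below or the flatness of $|U|$ improves from above, and the $O(\eps^2)$ closeness glues the two improvements into a single one for $U$ (this is the content of the expansion $V_\eps=(\tilde U_\eps\cdot\xi_1)+o(\eps^{3/2})$). To repair your proof you should replace every use of the supersolution property of $w_n$ by the corresponding statement for $u_{1,n}$ and carry both normalized functions through the compactness argument; the rest of your outline (transversal components converging to harmonic functions with Dirichlet data, identification of $\nu$ and $\xi$ from the first-order expansion at $0$, and the choice of $r_0$ with $Cr_0\le\frac12$) then goes through as you describe.
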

The main idea consists in tracking the improvement of $|U|$, rather than working component-wise as in \cite{KriventsovLin2019:degenerate,KriventsovLin2018:nondegenerate}. By working with $|U|$, some of the difficulties related to the no-sign assumption can be avoided. Indeed, the flatness assumption on the vector-valued function $U$ implies that one of its components is trapped between nearby translation of a one-plane solution,
while the remaining ones are small (in a quantitative way). Precisely, if 
$$
|U- \sqrt{\Lambda}(x_d)^+ \xi_1| \leq \eps \quad \text{in $B_1$},\qquad |U| \equiv 0 \quad \text{in }\left\{x \in B_{1} \colon  x_d < - \eps\right\}
$$
we get that 
\begin{enumerate}
\item for every $i=2,\ldots,m$, we have $|u^i| \leq C \eps(x_d+\eps)^+$ in $B_{3/4}$;\\[-0.5em]
\item $\sqrt{\Lambda}(x_d-\eps) \leq u_1 \leq |U| \leq \sqrt{\Lambda}(x_d+2\eps)^+$ in $B_1$.
\end{enumerate}
In particular, in the latter inequalities we have both $|U|$ and $u_1$ are scalar functions that inherit the flatness of $U$. Moreover, by \cref{d:viscos}, since 
$$
\begin{cases}
\Delta |U| \geq 0 &\mbox{in }\O_U\cap B_1\\
|\nabla |U||=\sqrt{\Lambda} &\mbox{on }\partial \O_U\cap B_1,
\end{cases}\quad
\begin{cases}
\Delta u_1 = 0 &\mbox{in }\O_U\cap B_1\\
|\nabla u_1|\leq \sqrt{\Lambda} &\mbox{on }\partial \O_U\cap B_1,
\end{cases}
$$
they are respectively a viscosity subsolution and supersolution of the (scalar) one-phase problem. Then, even if $u_1$ may change sign, we can show that either the flatness of $u_1$ is improved from below or the flatness of $|U|$ is improved from above (meanwhile, the other components become increasingly negligible as the flatness improves). 
By iterating this argument, following the general scheme of De Silva, one can deduce the compactness of the linearized sequences and, subsequently, regularity via a linearization of the form
$$
U = \sqrt{\Lambda}(x_d)^+ \xi_1 + \eps\tilde{U}_\eps, \quad |U| = \sqrt{\Lambda}(x_d)^+ + \eps V_\eps\qquad\mbox{in }\overline{\O_U}\cap B_1,
$$
where $V_\eps = (\tilde{U}_\eps \cdot \xi_1) + o(\eps^{3/2})$, as $\eps\to 0$. \medskip

\begin{remark}
The strategies from \cite{CaffarelliShahgholianYeressian2018:vectorial,MazzoleniTerraciniVelichkov:GAFA,MazzoleniTerraciniVelichkov:AnalPDE,KriventsovLin2018:nondegenerate,KriventsovLin2019:degenerate,DeSilvaTortone2020:ViscousVectorialBernoulli} are all based on the validity of the optimality condition in viscosity sense and can be applied not only to minimizers, but also to other classes of (stationary or viscosity) solutions. For minimizers, on the other hand, the regularity of the free boundaries without the sign assumption from \cite{CaffarelliShahgholianYeressian2018:vectorial,MazzoleniTerraciniVelichkov:GAFA,KriventsovLin2018:nondegenerate} was first obtained in \cite{SpolaorVelichkov:CPAM} via an epiperimetric inequality in dimension $d=2$. Whether this epiperimetric inequality holds in every dimension $d\ge 2$ is still an open question. 
\end{remark}
\subsection{The two-phase singular set $\mathrm{Sing}_2(\partial \O_U)$}\label{s:branching}
By the blow-up analysis, we already know that 
$$
x_0 \in \mathrm{Sing}_2(\partial \O_U) \,\,\longleftrightarrow\,\, x_0 \in \O_U^{(1)} \,\,\longleftrightarrow \,\,\begin{cases}
\text{Every blow-up $U_0$ at $x_0$ is a linear function},\\ 
\text{i.e. $U_0(x)=Ax$ for some matrix $A \in \R^{d\times k}$,}\end{cases}
$$
where by $\O_U^{(1)}$ we denote the set of boundary points at which $\Omega_U$ has Lebesgue density $1$.\medskip

Initially, in \cite[Lemma 4.2]{MazzoleniTerraciniVelichkov:GAFA} the authors showed that although the blow-up limit may be not unique, the rank of the matrix $A$ depends only on $x_0$ (for simplicity we denote with $\mathrm{Rk}(x_0)$ the rank of the blow-up limit at $x_0$). We notice that $1\le \mathrm{Rk}(x_0)\le \min\{k, d\}$. In particular:
\begin{enumerate}
    \item if $\mathrm{Rk}(x_0)=1$, then there is a unit vector $\nu \in \R^d$ such that the rows of the matrix $A$ are the vectors $\alpha_i \nu,\dots,\alpha_k \nu$, where $\alpha:=(\alpha_1,\dots,\alpha_k) \in \R^k$ satisfies
    $$
    |\alpha|^2 = \sum_{i=1}^k \alpha_k^2 \geq \Lambda.
    $$
    \item if $1<\mathrm{Rk}(x_0)\leq k$, the matrices $A := (a_{ij})_{ij}$ associated with blow-ups of $U$ at $x_0$ satisfy
$$
|A|^2 = \sum_{1\leq i,j\leq d}a_{ij}^2 \geq C_d\Lambda,
$$
    where $C_d$ is a dimensional constant.
\end{enumerate}
Therefore, we can decompose the two-phase singular set $\mathrm{Sing}_2(\partial \O_U)$ in terms of the rank
$$
\mathcal{S}_j:=\{x_0 \in \mathrm{Sing}_2(\partial \O_U): \mathrm{Rk}(x_0)=j\},\quad\mbox{with }j=1,\dots,k.
$$
Moreover, by \cite{MazzoleniTerraciniVelichkov:AnalPDE} with \cite[Theorem 1.2]{rectifiabilityvectorial}, we know that for every $j = 1,\dots, k$, the $j$-th stratum $\mathcal{S}_j$ is $(d - j)$-rectifiable and has locally finite $(d - j)$-dimensional Hausdorff measure.
\section{Free boundary systems and optimal domains for energy functionals}\label{s:vectorial-fg}
As already mentioned in Subsection \ref{s:subs-system}, there exists another remarkable class of shape optimization problems in which the presence of two state equations gives rise to a free boundary system. Therefore, let us recall the variational framework associated to the class of {\it integral shape functionals}. Consider the minimization problem associated to 
$$J(A):=\int_A j(u_A,x)\,dx\,,$$
where $A\in \mathcal{A}$ is an admissible domain (e.g., quasi-open or Lebesgue measurable sets $A\subset D$), the cost function $j:\R\times\R^d\to\R$ is fixed and the state function $u_A$ is the (weak) solution of the PDE
\begin{equation}\label{e:state-equation-open}
-\Delta u_A=f\quad \text{in}\quad A\,,\qquad u_A\in H^1_0(A)\,,
\end{equation}
where the force term $f:\R^d\to\R$ is also a prescribed function. As in the case of spectral functionals, it is natural to introduce the 
\begin{enumerate}
    \item[(i)] Penalized problem: given $\Lambda>0$ consider 
    \be\label{e:fg.penalized}\min\Big\{\int_A j(u_A,x)\,dx +\Lambda|A|\, :\ A\subset D\mbox{ quasi-open}\Big\};\ee
    \item[(ii)] Measure constrained problem: given $c>0$ consider 
    \be\label{e:fg.consrtrained}\min\Big\{\int_A j(u_A,x)\,dx\, :\ A\subset D \mbox{ quasi-open},\,|A|=c\Big\}.\ee
\end{enumerate}
For a general existence theory for these problems we refer to the book \cite{BucurButtazzoBook} and the references therein. Precisely, it is well-known that if $D\subset \R^d$ is a bounded open set and $j$ satisfies some suitable growth assumptions, then both the
penalized and the measure constrained problem admit optimal shapes
 $\Omega\subset D$, while the existence of open solutions was proved in \cite{ButtazzoMaialeVelichkov:Lincei}.

As already stressed in the introduction, it is convenient to introduce the so-called \emph{adjoint state function} $v_A$ as follows: for every open set $A\subset D$ we denote by $v_A$ the weak solution to 
\be\label{e:stateqv}
-\Delta v_A=g\quad \text{in}\quad A\,,\qquad v_A\in H^1_0(A)\,.
\ee
Then, by an integration by parts one can see that
$$\int_{D}gu_A\,dx=\int_D\nabla u_A\cdot\nabla v_A\,dx=\int_{D}fv_A\,dx\,,$$
which means that the two state variables $u_A$ and $v_A$ are interchangeable. Therefore, by an integration by parts, we the cost functional admits the following alternative formulation:  
$$
J(A) = \int_D \left(\nabla u_A \cdot \nabla v_A - g(x)u_A -f(x)v_A\right) \,dx
$$
for every open set $A \subset D$.

Recently, in the series of papers \cite{ButtazzoMaialeMazzoleniTortoneVelichkov:ARMA,MaialeTortoneVelichkov:RMI,MaialeTortoneVelichkov:AnnSNS} with Buttazzo, Maiale and Mazzoleni, we developed a regularity theory in the simplest case
\begin{equation}
j(u,x) := -g(x)u\,,\quad\text{with}\quad g\neq f.
\end{equation}
In particular, the results from \cite{ButtazzoMaialeMazzoleniTortoneVelichkov:ARMA, MaialeTortoneVelichkov:RMI,MaialeTortoneVelichkov:AnnSNS} apply to non-negative functions $f,g\colon D\to \R$ such that
\begin{enumerate}
    \item[(a)] $f,g \in L^\infty(D)\cap C^\infty(D)$;
    \item[(b)] there are constants $C_1, C_2 > 0$ such that
    $$
    0\leq C_1 g \leq f \leq C_2 g \quad\mbox{in }\overline{D}.
    $$
\end{enumerate}
Our main regularity result is the following.
\begin{theorem}[\cite{ButtazzoMaialeMazzoleniTortoneVelichkov:ARMA}]
Let $D\subset\R^d$ be a bounded open set, and suppose that the functions $f,g:D\to\R$ satisfy the assumptions (a) and (b). Then, both the problems \eqref{e:fg.penalized} and \eqref{e:fg.consrtrained} admit minimizers in the class of open sets. Moreover, in both cases, the free boundary of an optimal shape $\Omega$ can be decomposed as the disjoint union of regular and singular parts
   $$
   \partial \O\cap D = \mathrm{Reg}(\partial \O) \cup \mathrm{Sing}(\partial \O),
   $$
   where:
   \begin{itemize}
        \item[(i)] the regular part $\mathrm{Reg}(\partial \O)$ is a smooth manifold of dimension $(d-1)$. Moreover
        $$
        |\nabla u_\Omega||\nabla v_\Omega|=\Lambda \quad \mbox{on}\quad\mathrm{Reg}(\partial \O),
        $$
        where $u_\O$ and $v_\O$ are the state variables associated to the optimal shape $\O$;\smallskip
        \item[(ii)] the singular part $\mathrm{Sing}(\partial \O)$ is a closed subset of $\partial \O \cap D$. Moreover, there exists a dimensional threshold $d^*\in \{5,6,7\}$ such that the following properties holds
\begin{itemize}
\item[(1)] If $d<d^*$, then $\text{\rm Sing}(\partial \Omega)$ is empty.
\item[(2)] If $d\ge d^*$, then the Hausdorff dimension of $\text{\rm Sing}(\partial \Omega)$ is at most $d-d^*$. The dimensional threshold $d^\ast$ coincides with the first dimension in which homogeneous minimizers of the Alt-Caffarelli functional may exhibit isolated singularities. 
\end{itemize}
\end{itemize}
\end{theorem}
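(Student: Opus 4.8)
The plan is to reduce both shape optimization problems \eqref{e:fg.penalized} and \eqref{e:fg.consrtrained} to the analysis of the non-variational free boundary system \eqref{e:free-boundary-system-intro-0}, and then to invoke the $\eps$-regularity theorem of \cite{MaialeTortoneVelichkov:RMI} together with the Boundary Harnack Principle of \cite{MaialeTortoneVelichkov:AnnSNS}. Existence of open minimizers is guaranteed by \cite{ButtazzoMaialeVelichkov:Lincei} under the growth conditions ensured by (a)--(b), so the heart of the matter is regularity. First I would extract the standard qualitative properties of an optimal set $\Omega$: testing minimality against inner and outer competitors shows that $\Omega$ is simultaneously a shape sub- and supersolution, whence $\Omega$ has locally finite perimeter, obeys interior and exterior density estimates at every boundary point, and the state function $u_\Omega$ solving \eqref{e:state-equation-open} and the adjoint $v_\Omega$ solving \eqref{e:stateqv} are Lipschitz and nondegenerate up to $\partial\Omega$. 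Assumption (b) is essential here: from $0\le C_1 g\le f\le C_2 g$ one deduces that $u_\Omega$ and $v_\Omega$ are comparable, both positive inside $\Omega$, both vanishing on $\partial\Omega$, and solving $-\Delta u_\Omega=f$, $-\Delta v_\Omega=g$ with bounded right-hand sides.

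Next I would derive the free boundary condition by computing the first domain variation of $A\mapsto J(A)+\Lambda|A|$ along a compactly supported vector field and using the adjoint identity $\int_D g\,u_A=\int_D \nabla u_A\cdot\nabla v_A=\int_D f\,v_A$. This produces the overdetermined condition \eqref{e:free-boundary-fg}; since $j(u,x)=-g(x)u$ gives $j(0,x)=0$, it reduces to
\[
|\nabla u_\Omega|\,|\nabla v_\Omega|=\Lambda\qquad\text{on }\partial\Omega\cap D.
\]
As $u_\Omega,v_\Omega\ge 0$ vanish on $\partial\Omega$, their gradients at a regular boundary point are inward-normal, so $\nabla u_\Omega\cdot\nabla v_\Omega=|\nabla u_\Omega||\nabla v_\Omega|$ and, after the appropriate normalization, $(u_\Omega,v_\Omega,\Omega)$ solves \eqref{e:free-boundary-system-intro-0}. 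The delicate point is to promote this to a genuine \emph{viscosity} solution in the sense of \cite{MaialeTortoneVelichkov:RMI}, which I would do by testing the pair with smooth functions touching from above and below at free boundary points and showing that the bounded terms $f,g$ contribute only at quadratic order, hence do not disturb the leading linear behavior. Verifying the structural hypotheses of \cite{MaialeTortoneVelichkov:AnnSNS} for $\Omega$ then gives that $v_\Omega/u_\Omega$ extends to a positive $C^{0,\alpha}$ function across the flat part of $\partial\Omega$, which is exactly what is needed to linearize the product condition.

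With the viscosity formulation established, part (i) follows from the $\eps$-regularity theorem of \cite{MaialeTortoneVelichkov:RMI}: wherever the rescalings $\tfrac1r u_\Omega(x_0+r\,\cdot)$ are sufficiently flat, $\partial\Omega$ is a $C^{1,\alpha}$ graph. Writing $v_\Omega=h\,u_\Omega$ with $h\in C^{0,\alpha}$, $h>0$, via the Boundary Harnack Principle turns the condition into the scalar one-phase condition $|\nabla u_\Omega|=\sqrt{\Lambda/h}$, whose regularity can be bootstrapped to $C^\infty$ exactly as in the one-phase theory (higher-order boundary Harnack and the analyticity result of \cite{KinderlehrerNirenberg1977:AnalyticFreeBd}). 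This identifies $\mathrm{Reg}(\partial\Omega)$ with the set of flat points and establishes the free boundary condition there.

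For the singular set I would run a blow-up analysis built on a Weiss-type monotonicity formula for the scalar problem governing $u_\Omega$ (with the coefficient $h$ frozen in the limit), so that every blow-up limit is $1$-homogeneous. The key observation, and the reason for introducing \emph{stable} solutions of the one-phase Bernoulli problem, is that because $(u_\Omega,v_\Omega)$ comes from a minimization problem, the nonnegativity of the second domain variation survives the blow-up and forces each limit to be a homogeneous \emph{stable} one-phase solution rather than a mere stationary point. Classifying such cones through the stability (second-variation) inequality excludes singular homogeneous solutions below the critical dimension $d^*\in\{5,6,7\}$, in the same spirit as \cite{CaffarelliJerisonKenig04:NoSingularCones3D,JerisonSavin15:NoSingularCones4D} for minimizers, and Federer's dimension reduction then yields that $\mathrm{Sing}(\partial\Omega)$ is empty for $d<d^*$ and of Hausdorff dimension at most $d-d^*$ for $d\ge d^*$. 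The two steps I expect to be genuinely hard are the verification of the Boundary Harnack hypotheses of \cite{MaialeTortoneVelichkov:AnnSNS} on the a priori irregular set $\Omega$---the non-variational product condition does not descend from a single elliptic functional, so the NTA arguments of the variational theory are unavailable---and the classification of stable, but not necessarily minimizing, one-phase cones, which is ultimately what pins down the codimension $d^*$ of the singular set.
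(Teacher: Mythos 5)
Your reduction to the free boundary system, the derivation of the qualitative properties via shape sub/supersolutions, the viscosity formulation of $|\nabla u_\Omega||\nabla v_\Omega|=\Lambda$, the use of the Boundary Harnack Principle of \cite{MaialeTortoneVelichkov:AnnSNS} and of the $\eps$-regularity theorem of \cite{MaialeTortoneVelichkov:RMI} for part (i), and the idea that stability of the second domain variation should survive the blow-up and pin down $d^*$ via the Caffarelli--Jerison--Kenig inequality, all match the paper's strategy. However, there is a genuine gap at the pivot of your singular-set analysis: you propose to ``run a blow-up analysis built on a Weiss-type monotonicity formula for the scalar problem governing $u_\Omega$ (with the coefficient $h$ frozen in the limit), so that every blow-up limit is $1$-homogeneous.'' No such monotonicity formula is available here, and the paper says so explicitly: the first-order optimality condition $\delta J(\Omega)[\xi]=0$ does \emph{not} yield a Weiss formula for $u_\Omega$ and $v_\Omega$, because $u_\Omega$ is not an (almost-)minimizer of any Bernoulli-type functional --- only domain variations are admissible, and truncations, harmonic replacements and radial competitors in small balls are forbidden. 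Moreover the function $h=v_\Omega/u_\Omega$ you want to freeze is produced by the Boundary Harnack Principle, which in this scheme is applied only \emph{after} a first blow-up has killed the right-hand sides $f,g$; it is not available on the original, a priori irregular domain at an arbitrary (possibly singular) boundary point.

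The paper circumvents exactly this obstruction with the \emph{triple consecutive blow-up}: a first blow-up at $x_0\in\partial\Omega$ produces a stable critical pair $(u_0,v_0)$ with $f\equiv g\equiv 0$, to which the Boundary Harnack Principle applies and gives H\"older continuity of $u_0/v_0$ up to the boundary; a second blow-up at the origin then yields proportional functions $u_{00},v_{00}$, so that $u_{00}$ is (up to a constant) a stable critical point of the scalar Alt--Caffarelli functional; a third blow-up finally produces a $1$-homogeneous stable solution, and a diagonal argument shows that at every boundary point \emph{some} blow-up sequence converges to such a homogeneous stable cone. Note that this is weaker than what you claim (homogeneity of \emph{every} blow-up limit), but it suffices both for the viscosity free boundary condition at regular points and for Federer dimension reduction on the singular set. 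You would either need to supply a monotonicity formula that the non-variational structure does not provide, or replace that step by an argument of this type. A secondary omission: for the measure-constrained problem \eqref{e:fg.consrtrained} the stability inherited by the blow-ups is only along volume-preserving (mean-zero) variations, and one must additionally show that for homogeneous cones this weaker stability is equivalent to the Caffarelli--Jerison--Kenig one before concluding $d^*\in\{5,6,7\}$.
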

The previous theorem is presented in a simplified form compared to the version in \cite{ButtazzoMaialeMazzoleniTortoneVelichkov:ARMA} (see the discussion in \cite[Remark 1.4 and Remark 1.5]{ButtazzoMaialeMazzoleniTortoneVelichkov:ARMA}). Nevertheless, assumptions (a)–(b) are the most restrictive conditions employed in the analysis of the regularity of both the state variables and the free boundary. In particular, assumption (b) represents the main obstacle to extending the theory to more general integral cost functionals.\\

We stress that the special case in which $f$ and $g$ are proportional has been already studied in literature. Precisely, (see \cite[Subsection 1.1]{ButtazzoMaialeMazzoleniTortoneVelichkov:ARMA}) if $f$ and $g$ are such that
\begin{equation}\label{e:intro-proportional}
g=\frac{1}{2 C^2} f\quad\text{for some constant}\quad C>0\,,
\end{equation} it is possible to deduce that the state variables inherits the proportionality, and so the penalized and the constrained problem are respectively equivalent to the free boundary problems
$$
\min\bigg\{\int_D\Big(\frac12|\nabla u|^2-f(x)u+ C^2 \Lambda\ind_{\{u\neq 0\}}\Big)\,dx\ :\ u\in H^1_0(D)\bigg\}.
$$
and 
$$
\min\bigg\{\int_D\Big(\frac12|\nabla u|^2-f(x)u\Big)\,dx\ :\ u\in H^1_0(D), |\{u\neq 0\}|=c\bigg\}.
$$
Finally, we notice that when $f$ and $g$ are not proportional, the state function $u_\Omega$ of an optimal set $\Omega$ is not a minimizer of a Bernoulli-type  functional (neither a function of the state variables $u_\O, v_\O$). In particular, this implies that one can test the optimality of $u_\Omega$ only with functions $\widetilde u$ which are themselves state functions of some admissible $\widetilde \Omega$. This means that a function $\widetilde u$, that differs from $u$ only in a small ball $B_r$, cannot be used to test the optimality of $u_\Omega$ (truncations, harmonic replacements and radial extensions in small balls are not admitted), which makes most of the classical free boundary regularity results impossible to apply.\\

For the sake of simplicity, in this section we review the main results only for the penalized problem \eqref{e:fg.penalized}.
Indeed, by adapting the methodologies developed in \cite{Briancon2010,briancon1}, it is possible to extend the study of the state functions and the $\varepsilon$-regularity theorem to the case of minimizers subject to a measure constraint.
However, since the analysis of singular points depends on the specific structure of the problem, in \cref{s:singular.constrained} we highlight the main differences between the two approaches and present a versatile methodology for studying singular points in the measure-constrained case.

\subsection{Qualitative properties of the state variables} 
In order to deduce the optimal regularity and qualitative properties of the state variable of the state functions, it is possible to exploit again the notion of shape subsolution (inward minimality) and shape supersolution (outward minimality). 
Nevertheless, this is the point in which assumption (b) is used and plays a major role  in the proofs of the Lipschitz continuity and the non-degeneracy of the state function $u_\Omega$ (we notice that (b) is automatically satisfied when $f$ and $g$ are both bounded from above and from below by positive constants). We postpone to \cref{o:generalizedfg} the discussion on the role of the assumption (b) in future directions.\\

Given a set $A\subset\R^d$, and functions $f\in L^2(A)$ and $\varphi\in H^1(A)$, we set
$$
E_f(\varphi,A):=\frac12\int_A|\nabla \varphi|^2\,dx-\int_Af(x)\varphi\,dx\,.
$$
Then in \cite[Proposition 3.1]{ButtazzoMaialeMazzoleniTortoneVelichkov:ARMA} they showed if $\Omega\subset D$ is a shape optimizer of the penalized problem, then the state variable $u_\Omega$ satisfies the following properties.
\begin{enumerate}[\rm (i)]
\item Shape supersolution (outwards minimality). For every open set $\widetilde \Omega\subset D$ such that $\O\subset\widetilde\Omega$ we have
$$E_f(u_\Omega,D) + \frac{C_2}{2}\Lambda|\O|\le
E_f(\phi,D) + \frac{C_2}{2}\Lambda|\widetilde\Omega|\qquad\mbox{for every }\,\phi \in H^1_0(\widetilde \Omega);$$
\item  Shape subsolution (inwards minimality). For every open set $\omega\subset \Omega$ we have
$$
E_f(u_\Omega,D) + \frac{C_1}{2}\Lambda|\Omega|\le
E_f(\phi,D) + \frac{C_1}{2}\Lambda|\omega|\qquad\mbox{for every }\,\phi \in H^1_0(\omega)\,.
$$
\end{enumerate}
Then, the Lipschitz continuity follows by applying \cite[Theorem 3.3]{BucurMazzoleniPratelliVelichkov:ARMA}, while the non-degeneracy is a consequence of \cite[Lemma 4.4]{AltCaffarelli:OnePhaseFreeBd}, \cite[Lemma 3.3]{BucurVelichkov-multiphase}. Naturally, being the two state variable interchangeable, the same results hold for $v_\O$.
Starting from this observation, it is possible to deduce further properties of the optimal sets such as density estimates of the form \eqref{e:interior-density} and \eqref{e:exterior-density}. In light of the discussion of the previous section, it is natural to except that validity of an exterior density estimate will prevent the occurrence of two-phase singular points.

\subsection{Blow-up analysis}\label{s:triple}
Since both $u_\Omega$ and $v_\Omega$ are Lipschitz continuous and non-degenerate, we can perform a blow-up analysis, by following the strategy explained in the previous sections.
However, there are two main difficulties that we encounter when  following this path.

\subsubsection{Criticality and stability along inner variations} The first difficulty comes from the impossibility to make external perturbations of $u_\Omega$ and $v_\Omega$ (that is no perturbations of the form $\widetilde u=u_\Omega+\phi$ and $\widetilde v=v_\Omega+\psi$ are allowed). Indeed, in this context the only admissible variations are the one associated to domain variations obtained as follows. Given an optimal set $\O$ in $D$ and $\xi \in C^\infty_c(D;\R^d)$, consider the flow $\Phi_t:=\mathrm{Id}+t\xi$ generated by $\xi$ and set $\O_t:=\Phi_t(\O)$. Then, by the minimality of $\Omega$, we have
\begin{equation}\label{e:intro-stable-critical-point}
\delta J(\O)[\xi]:=\frac{\partial}{\partial t}\Big|_{t=0} J(\Omega_t)=0\qquad\text{and}\qquad \delta^2 J(\O)[\xi]:=\frac{\partial^2}{\partial t^2}\Big|_{t=0} J(\Omega_t)\ge0\,.
\end{equation}
We notice that domains $\Omega$ that satisfy these conditions for every smooth vector fields with compact support are said to be stable critical points of $J$.\medskip

\noindent\begin{minipage}{0.98\textwidth}
\begin{minipage}{0.33\textwidth}
      \begin{tikzpicture}[scale=2.0]
     \begin{scope}[rotate=30]
  \clip (0,0) ellipse (1cm and 1cm);
\filldraw[red2!30, domain=-1:1]  (0,-2) -- plot (\x^4,\x ) -- (2,0) -- (0,-2) -- cycle;
  \fill[red2!55] plot [smooth] coordinates {(-0.486,-0.84) (-0.316,-0.78)  (-0.062,-0.6) (0.09,-0.42)  (0.35,0) (0.2,0.25) (0.15,0.5) (0.316,0.73)  (0.488,0.83) (1,1) (2,0) (1,-2) (-1,-1)};
 \end{scope}
     \draw[black,thick] (1,0) arc (0:180:1cm);
    \draw[black,thick] (1,0) arc (0:-180:1cm);
  \draw[white] (-1.44,0.1);
  \draw[color=red2!55!gray,thick] (-0.93^4+0.2,-1) node[anchor=north west]{$\partial \Omega$};
  \draw[color=red2!55!gray, thick, domain=-0.93:0.93, rotate=30,smooth] plot
  (\x^4,\x );
  \draw[black] (0.15,0.52) node {$\partial \O_t$};
  \draw [color=red2!55!gray,thick,dashed,rotate=30] plot [smooth] coordinates {(-0.486,-0.84) (-0.316,-0.78)  (-0.062,-0.6) (0.09,-0.42)  (0.35,0) (0.2,0.25) (0.15,0.5) (0.316,0.73)  (0.488,0.83)};
   \filldraw[black] (0,0) circle (0.7pt) node[anchor= east] {$x_0$};
   \draw[black] (-0.4,-0.4) node {$-\Delta u_\O=f$};
   \draw[black,rotate=30] (0.33,-0.1) node[anchor=west] {$t \xi(x_0)$};
   \draw[-latex,black,thick,rotate=30] (0,0) -- (0.35,0);
    \draw[black] (-0.5,0.2) node {$\Omega$};
   \end{tikzpicture}
  \end{minipage}
     \quad 
  \begin{minipage}{0.66\textwidth}
  \vspace{-0.2cm}
  Now, the only information conserved along blow-up sequences is the one contained in the internal variations of $\Omega$ along smooth vector fields. We stress that, in order to compute $\delta J$ and $\delta^2J$ along the vector field $\xi$,
  we need to keep track of the state variables $u_{\O_t}, v_{\O_t}$ along the flow generated by $\xi$. Precisely, for every time $t$ we solve 
  $$
  \hspace{-0.1\linewidth}\begin{cases}
  -\Delta u_{\O_t} = f &\mbox{in }\Omega_t\\
  -\Delta v_{\O_t} = g &\mbox{in }\Omega_t\\
  u_{\O_t} = v_{\O_t}=0 & \mbox{on }\partial \Omega_t.
  \end{cases}
  $$
  \end{minipage}
  \end{minipage}
\vspace{0.2cm}\\
At this point, we only know that the $\O$ is an open set whose boundary admits a relatively open subset which is regular (in the sense specified in Subsection \ref{s:eps-fg}). Thus, in order to compute explicitly both the first and the second variation in \eqref{e:intro-stable-critical-point}, we need a local expansion of the state variables along the domain variations (see \cite[Section 2]{ButtazzoMaialeMazzoleniTortoneVelichkov:ARMA} for a detailed discussion on this topic). Roughly speaking, being $\O$ an open set, the maps $t \mapsto u_{\O_t}$ is not $C^2$-differentiable, whereas 
$$
t \mapsto (u_{\O_t}\circ \Phi_t\colon \O\to \R) \quad \text{is $C^2$-diferentiable}.
$$
Indeed, we can show the validity of a Taylor-type expansion (see \cite[Proposition 2.5]{ButtazzoMaialeMazzoleniTortoneVelichkov:ARMA}) 
$$
u_{\O_t}\circ \Phi_t  = u_\O + t(\delta u_\O) + t^2(\delta^2 u_\O) + o(t^2), \text{ as $t\to 0^+$}
$$
where 	\begin{align*}
	-\Delta (\delta u_\O)=\dive((\delta A)\nabla u_\O)+\delta f &\quad\text{in}\quad\Omega\,,
	\qquad\delta u_\O\in H^1_0(\O),\\
	-\Delta (\delta^2 u_\O)=\dive((\delta A)\nabla (\delta u_\O))+\dive((\delta^2 A)\nabla u_\O)+\delta^2 f &\quad\text{in}\quad\Omega\,,
	\qquad\delta^2 u_\O\in H^1_0(\O),
	\end{align*}
	the matrices $\delta A\in L^\infty(D;\R^{d\times d}),  \delta^2A\in L^\infty(D;\R^{d\times d})$ are given by
	\begin{align*}
	\begin{aligned}
	\delta A &:=-(\nabla \xi)^T-\nabla\xi+(\dive\xi)\text{\rm Id}\,,\\
	\delta^2 A &:= (\nabla \xi)^T\,(\nabla\xi)+\frac12\big(\nabla\xi\big)^2+\frac12((\nabla \xi)^T)^2-\frac12(\xi\cdot\nabla)\big[\nabla\xi+(\nabla \xi)^T\big]\\
	&\qquad-\big(\nabla\xi+(\nabla \xi)^T\big)\dive\xi+\text{\rm Id}\frac{(\dive\xi)^2+\xi\cdot\nabla(\dive\xi)}{2}\,,
	\end{aligned}
	\end{align*}
	while the variations $\delta f\in L^2(D)$ and $\delta^2f\in L^2(D)$ of the right-hand side $f$ are:
	\begin{align*}
	\begin{aligned}
	\delta f &:=\dive(f\xi)\,,\\
	\delta^2 f &:= \frac12 \xi \cdot (D^2f)\xi+\frac12\nabla f\cdot \nabla\xi[\xi]+f\frac{(\dive\xi)^2+\xi\cdot\nabla[\text{\rm div}\,\xi]}{2}+(\nabla f \cdot \xi)\dive\xi \,.
	\end{aligned}
	\end{align*}
Naturally, since the state variables $u_\O, v_\O$ are interchangeable, a similar Taylor-type expansion holds for $v_\O$ and $g$. 
Therefore, in light of this notation, by \cite[Lemma 2.6]{ButtazzoMaialeMazzoleniTortoneVelichkov:ARMA} we have that 
\begin{align}\label{e:first-derivative-F-along-vector-field}
\begin{aligned}
\delta J(\O)[\xi]=&\, \int_\O \left(\nabla u_\Omega \cdot \nabla v_\Omega + \Lambda \right)\dive\xi - \nabla u_\O \cdot ((\nabla\xi) + (D\xi))\nabla v_\O \, dx\\
&\,-\int_\Omega u_\O \dive(g\xi) + v_\O \dive(f\xi)\, dx.
\end{aligned}
\end{align}
Moreover, if $\partial\Omega$ is $C^2$-regular in a neighborhood of the support of $\xi$, then
\be\label{e:statiosmooth}
\delta J (\O)[\xi] = \int_{\partial \Omega} (\nu \cdot \xi)\big(\Lambda-|\nabla u_\O ||\nabla v_\O|\big)\, d\HH^{d-1},
\ee
where $\nu$ is the outer unit normal to $\partial \Omega$. Similarly, by \cite[Proposition 2.8]{ButtazzoMaialeMazzoleniTortoneVelichkov:ARMA}, we have
\be\label{e:second-derivative-F-along-vector-field}
\frac12 \delta^2 J(\O)[\xi]
=\,
\int_\O\Big(\nabla u_\Omega\cdot(\delta^2 A)\nabla v_\Omega-\nabla (\delta u_\O)\cdot \nabla (\delta v_\O)- (\delta^2 f) v_\Omega - (\delta^2 g) u_\Omega\Big) dx\,,
\ee
where $\delta^2 A,\delta^2 f$, $\delta^2 g$, $\delta u_\O$ and $\delta v_\O$ are the ones previously defined.
We stress again that in the formulations of the first and the second derivatives \eqref{e:first-derivative-F-along-vector-field}-\eqref{e:second-derivative-F-along-vector-field} it is not necessary to assume regularity of the optimal shape, indeed they are defined for every open set $\O$. The first and the second variation formulas can seem quite involved at this point, but both expressions will get simpler at blow-up. 

\subsubsection{The triple blow-up} The last obstruction is in the fact that the first order optimality condition (i.e., $\delta J(\O)[\xi]=0$ for every smooth vector field with compact support) is not leading to a monotonicity formula for $u_\Omega$ and $v_\Omega$; in particular, we do not know if the blow-up limits of $u_\Omega$ and $v_\Omega$ are in general homogeneous.\medskip

In order to solve this issue we analyze the blow-up sequences of stable critical points of the shape functional $J$ (i.e., shapes $\O$ satisfying \eqref{e:intro-stable-critical-point}). 
Indeed, in \cite{ButtazzoMaialeMazzoleniTortoneVelichkov:ARMA} we show that such notion is stable under blow-up limits at free boundary points 
$$u_0(x)=\lim_{n\to\infty}\frac{u_{\Omega}(x_0+r_nx)}{r_n}\quad\text{and}\quad v_0(x)=\lim_{n\to\infty}\frac{v_{\Omega}(x_0+r_nx)}{r_n}\,\qquad\mbox{for all}\quad x_0 \in \partial \O.$$
Thus $\Omega_0=\{u_0>0\}=\{v_0>0\}$ is a stable critical point for the same functional, but this time with $f\equiv g\equiv0$. Now, since $u_0,v_0:\R^d\to\R$ are harmonic on $\Omega_0$, we can apply the Boundary Harnack Principle from \cite{MaialeTortoneVelichkov:AnnSNS} to obtain that the ratio $u_0/v_0$ is H\"older continuous in $\Omega_0$, up to $\partial\Omega_0$. After a second blow-up (this time centered in the origin), we get that the positivity set $\Omega_{00}$ of the functions
$$u_{00}(x)=\lim_{m\to\infty}\frac{u_{0}(r_mx)}{r_m}\qquad\text{and}\qquad v_{00}(x)=\lim_{m\to\infty}\frac{v_{0}(r_mx)}{r_m}\,,$$
is a stable critical point (in every ball $B_R\subset\R^d$) for the functional $\mathcal F(\cdot,B_R)$, still with $f\equiv g\equiv0$. Moreover, by the H\"older continuity of $u_0/v_0$, we  have that $u_{00}$ and $v_{00}$ are proportional. Thus, $u_{00}$ is (up to a constant) a stable critical point (in the sense of \eqref{e:intro-stable-critical-point}) for the Alt-Caffarelli's one-phase functional
$$\mathcal G(u;B_R):=\int_{B_R}|\nabla u|^2\,dx+|\{u>0\}\cap B_R|\quad\text{in every ball}\quad B_R\subset \R^d\,,$$
so the third blow-up in zero
$$u_{000}(x)=\lim_{k\to\infty}\frac{u_{00}(r_kx)}{r_k}\,$$
is a 1-homogeneous stable critical point for the Alt-Caffarelli's functional $\mathcal G$ (see \cref{def:global-stable-solutions} for the precise notion of stability, which is different from the one introduced by Caffarelli, Jerison and Kenig in \cite{CaffarelliJerisonKenig04:NoSingularCones3D}). Therefore, by a diagonal argument, at every point on $\partial\Omega$ there exists of a blow-up sequence converging to some homogeneous stable critical point for the Alt-Caffarelli's functional.\medskip

The existence of a homogeneous blow-up $u_{000}$ is a key element in the local analysis of free boundaries. Indeed, it allows to prove that the state functions $u_\Omega$ and $v_\Omega$ satisfies the free boundary condition
\begin{equation}\label{e:free-boundary-condition-product}
|\nabla u_\Omega||\nabla v_\Omega|=\Lambda \quad \mbox{on }\partial \O,
\end{equation}
in a viscosity sense, by showing that if we take a free boundary point admitting a one-sided tangent ball, then the homogeneous blow-up limits $u_{000}$ and $v_{000}$ constructed above are half-plane solutions. This, in combination with the epsilon-regularity theorem from \cite{MaialeTortoneVelichkov:RMI}, implies that, in a neighborhood of any boundary point admitting a half-plane solution as blow-up limit, the free boundary is $C^{1,\alpha}$-regular (see Subsection \ref{s:eps-fg} for a precise discussion on the role of $\eps$-regularity).

\subsubsection{On the notion of stability for the one-phase problem}
We conclude the section by discussing the role of the stability in the study of singular points, namely those points in which the blow-up limit is not an half-plane solution. It is natural to expect that the stability of $\Omega$ (the second part of \eqref{e:intro-stable-critical-point}) leads to an estimate on the dimension of the singular set; in fact, the bounds on the critical dimension (the dimension in which a singularity appears for the first time) for minimizers of the Alt-Caffarelli functional rely (see \cite{CaffarelliJerisonKenig04:NoSingularCones3D} and \cite{JerisonSavin15:NoSingularCones4D}) on the well-known {\it stability inequality} of Caffarelli, Jerison and Kenig, which was originally obtained in \cite{CaffarelliJerisonKenig04:NoSingularCones3D} through a particular second order variation of the functional $\mathcal G$.

For the sake of completeness, we give here the notion of stability arising from the blow-up analysis.

\begin{definition}[Global stable solutions of the one-phase problem]\label{def:global-stable-solutions}
A function $u:\R^d\to\R$ is said to be a \emph{global stable solution of the Alt-Caffarelli problem} if, for every smooth vector field with compact support $\eta \in C^\infty_c(\R^d;\R^d)$, we have:
\be\label{e:global-stable-solutions-main}
\delta \mathcal G(u)[\eta]=0\qquad\text{and}\qquad \delta^2 \mathcal G(u)[\eta]\ge0\,,
\ee
and if the following conditions are fulfilled:
\begin{enumerate}
\item[$\qquad(\mathcal A1)$]\label{item:def-stable-1} $u$ is globally Lipschitz continuous and non-negative on $\R^d$\rm ;\it
\item[$\qquad(\mathcal A2)$]\label{item:def-stable-2} $u$ is harmonic in the open set $\O_u$\rm;\it
\item[$\qquad(\mathcal A3)$]\label{item:def-stable-3} there is a constant $c>0$ such that
$$|B_r(x_0)\cap \O_u| \leq (1-c)|B_r|,$$
for every $x_0\in \R^d\setminus\O_u$ and every $r>0$\rm ;\it
\item[$\qquad(\mathcal A4)$]\label{item:def-stable-4} $0\in\partial\O_u$ and there is a constant $C > 0$ such that
$$\sup_{B_r(x_0)} u \ge \frac{1}{C} r\quad\text{for every}\quad x_0\in \overline\O_{u}\,\text{ and }\,r>0\,;$$
\item[$\qquad(\mathcal A5)$]\label{item:def-stable-5} there is a constant $C>0$ such that, for every $R>0$,
$$\left|\int_{B_{R}}\nabla u\cdot\nabla\varphi\,dx\right|\le C R^{d-1}\|\varphi\|_{L^\infty(B_{R})}\quad\text{for every}\quad \varphi\in C^\infty_c(B_R)\,.$$
\end{enumerate}
\end{definition}
Although the notion of stability, combined with the existence of a one-homogeneous triple blow-up, allows one to estimate the dimension of the singular set in terms of the smallest dimension admitting one-homogeneous stable solutions with singularities, several difficulties arise. Ultimately we can prove that on smooth cones (that is, cones with isolated singularity) this notion of stability is equivalent to the stability inequality of Caffarelli, Jerison and Kenig. Thus, we obtain the bound $5\le d^\ast\le 7$ on the critical dimension $d^\ast$ as a consequence of the results of Jerison and Savin \cite{JerisonSavin15:NoSingularCones4D} and De Silva and Jerison \cite{DeSilvaJerison09:SingularConesIn7D}. Finally, by the Federer's dimension reduction principle (and the epsilon-regularity theory), we obtain the bound on the Hausdorff dimension of the singular set. 

\subsection{The $\eps$-regularity theory}\label{s:eps-fg}
By the previous section, we can define the regular part of free boundary $\mathrm{Reg}(\partial \O)$ as the collection of boundary points for which at least one blow-up limit is a half-plane solution. Precisely, we say that $x_0 \in \mathrm{Reg}(\partial \O)$ if there exists a unitary vector $\nu \in\R^d$, positive coefficients $\alpha,\beta \in \R$, and blow-up limits $u_0,v_0$ such that 
$$
u_0(x) = \alpha(x\cdot \nu)^+,\quad v_0(x) = \beta(x\cdot \nu)^+\quad\mbox{ with }\quad \alpha \beta=\Lambda.
$$
A distinctive phenomenon emerges in this context: at regular points, although the state variables resemble multiples of the distance function, the individual slopes of these functions are not explicitly known. Instead, we only have global information, specifically that the product of the slopes remains constant across the entire regular set. Therefore, the free boundary condition \eqref{e:free-boundary-condition-product} can be written as 
$$
|\nabla \sqrt{u_\O v_\O}| = \sqrt{\Lambda} \quad \text{on } \mathrm{Reg}(\partial \O).
$$
In light of the analysis in \cite{MaialeTortoneVelichkov:RMI}, we can start by considering the following viscosity formulation of the free boundary condition.

\begin{definition}\label{d:viscos-fg}
Let $u_\O, v_\O$ be the two state variables associated to $\O\subset D$. We say that
\be\label{e.Vfg}
|\nabla \sqrt{u_\O v_\O}| = \sqrt{\Lambda} \quad \text{on } \partial \Omega_u \cap D
\ee
holds in the viscosity sense if, for every $x_0 \in \partial \Omega_u \cap D$ and every test function $\varphi \in C^2(D)$ with $|\nabla \varphi(x_0)| \neq 0$, the following holds:
\begin{enumerate}
\item[(i)] if $\varphi^+$ touches $\sqrt{u_\O v_\O}$ from below at $x_0$, then $|\nabla \varphi(x_0)|\le 1$;
\item[(ii)] if $\varphi^+$ touches $\sqrt{u_\O v_\O}$ from above at $x_0$, then $|\nabla \varphi(x_0)|\ge 1$;
\item[(iii)] if $a,b \in \R$ are such that 
$$
a>0,\quad b>0\quad\mbox{and}\quad ab=1,
$$
then if $\varphi^+$ touches $w_{ab}:=\frac12(au_\O+bv_\O)$ from above at $x_0$, then $|\nabla \varphi(x_0)|\ge 1$. 
\end{enumerate}
\end{definition}
From the previous blow-up analysis, it follows immediately that minimizers satisfy the free boundary condition in the aforementioned sense. Moreover, it is immediate to observe that if $u_\O$ and $v_\O$ satisfy \cref{d:viscos-fg}, then so do $\alpha u_\O$ and $\beta v_\O$, provided that $\alpha\beta = 1$.
\begin{lemma}[Improvement of flatness] Let $u_\O,v_\O$ be $\eps$-flat viscosity solution to \eqref{e.Vfg} with $0 \in \partial \O_u$. Namely, up to a rotation in $\R^d$, assume that
\begin{equation*}
 |u_\O - \sqrt{\Lambda}(x_d)^+|\leq \varepsilon \qquad \text{and} \qquad  |v_\O - \sqrt{\Lambda}(x_d)^+|\leq \varepsilon \qquad\text{in } B_1.
\end{equation*}
Then, there exists $r_0,\eps_0>0$ universal such that, if $\eps\in (0,\eps_0]$
\begin{equation*}
  |u_\O - \alpha(x\cdot \nu)^+|\leq \frac{\varepsilon}{2}r_0 \quad \text{and} \quad  |v_\O - \beta(x\cdot \nu)^+|\leq \frac{\varepsilon}{2}r_0\qquad \text{in }B_{r_0},
\end{equation*}
where $\nu\in \R^d$ is a unit direction such that $|\nu- e_d|\leq C\eps$ and $\alpha, \beta$ are positive constants satisfying
\begin{equation*}
\alpha\cdot\beta=\Lambda,\qquad |\sqrt{\Lambda} - {\alpha}|\le C\varepsilon\qquad\text{and}\qquad |\sqrt{\Lambda}-{\beta}| \le C\varepsilon.
\end{equation*}
\end{lemma}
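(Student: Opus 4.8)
The plan is to run the linearization scheme of De Silva, as adapted to vectorial Bernoulli problems in \cite{DeSilvaTortone2020:ViscousVectorialBernoulli,MaialeTortoneVelichkov:RMI}, but tracking the scalar quantity $w:=\sqrt{u_\O v_\O}$ instead of the pair $(u_\O,v_\O)$. After rescaling we may assume $\Lambda=1$, and by \cite{MaialeTortoneVelichkov:RMI} it is enough to treat solutions that are harmonic in $\O_u$ (the bounded right-hand sides of the state equations enter only as a lower-order perturbation and are suppressed at blow-up). The algebraic starting point is the AM--GM identity
$$w=\sqrt{u_\O v_\O}=\inf_{\substack{a,b>0\\ ab=1}}w_{ab},\qquad w_{ab}:=\tfrac12\big(au_\O+bv_\O\big),$$
which exhibits $w$ as an infimum of the harmonic functions $w_{ab}$; in particular $w$ is superharmonic, the dual situation to the vectorial case where $|U|=\sup_{|\xi|=1}U\cdot\xi$ is subharmonic. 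In this language, conditions (i)--(ii) of \cref{d:viscos-fg} assert that $w$ satisfies $|\nabla w|=1$ on $\partial\O_u$ in the viscosity sense, while condition (iii) is exactly the supersolution property for the whole family $w_{ab}$, which will furnish harmonic comparison functions at the points where $w$ itself fails to be subharmonic. Finally, flatness together with the non-degeneracy of $u_\O,v_\O$ traps $\partial\O_u\cap B_{1/2}$ in the strip $\{|x_d|\le C\eps\}$ and yields $(x_d-C\eps)^+\le u_\O,v_\O\le(x_d+C\eps)^+$.

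I would then argue by contradiction. Assume the statement fails along a sequence $\eps_n\to0$ with flat solutions $u_n,v_n$; set $w_n:=\sqrt{u_nv_n}$ and introduce the linearizations
$$\tilde u_n:=\frac{u_n-(x_d)^+}{\eps_n},\qquad \tilde v_n:=\frac{v_n-(x_d)^+}{\eps_n},\qquad \tilde w_n:=\frac{w_n-(x_d)^+}{\eps_n}$$
on $\O_{u_n}\cap B_{1/2}$. These are uniformly bounded by the trapping, and the Partial Harnack inequality for viscosity solutions of \cite{MaialeTortoneVelichkov:RMI}, applied to $w_n$, improves the trapping geometrically and provides uniform H\"older estimates up to the free boundary for $\tilde w_n$ and, through the two-sided bounds, for $\tilde u_n,\tilde v_n$. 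Passing to a subsequence, the graphs of $\tilde u_n,\tilde v_n,\tilde w_n$ converge locally uniformly to H\"older functions $\tilde u,\tilde v,\tilde w$ on $B^+_{1/2}:=B_{1/2}\cap\{x_d>0\}$, the free boundaries $\partial\O_{u_n}$ flatten to $\{x_d=0\}$, and $\tilde w=\tfrac12(\tilde u+\tilde v)$.

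Next I would identify the linearized problem. Harmonicity of $u_n,v_n$ in $\O_{u_n}$ passes to the limit, so $\tilde u,\tilde v$ (hence $\tilde w$) are harmonic in $B^+_{1/2}$. Since $u_n$ and $v_n$ vanish on the common free boundary $\partial\O_{u_n}$, all three linearizations share the same trace $-\phi$ on $\{x_d=0\}$, where $\phi$ is the limiting free boundary height; in particular $\tilde d:=\tilde u-\tilde v$ vanishes on $\{x_d=0\}$. The crucial step is to upgrade (i)--(ii) into the linearized Neumann condition $\partial_{x_d}\tilde w=0$ on $\{x_d=0\}$: the half coming from touching $w$ from above (condition (ii)) is run directly against $w$ using its superharmonicity, whereas the half coming from touching $w$ from below (condition (i)) cannot use $w$ and is instead run against the harmonic function $w_{ab}$ realizing the infimum at the contact point, which is admissible precisely by condition (iii). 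Even reflection then extends $\tilde w$ to a harmonic function across $\{x_d=0\}$ and odd reflection extends $\tilde d$; both are smooth near the origin, with $\partial_{x_d}\tilde w(0)=0$ and $\tilde d(x)=\partial_{x_d}\tilde d(0)\,x_d+O(|x|^2)$.

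Finally, the improvement: writing $\tilde w(x)=\tilde w(0)+\nabla'\tilde w(0)\cdot x'+O(|x|^2)$ and recombining $\tilde u=\tilde w+\tfrac12\tilde d$, $\tilde v=\tilde w-\tfrac12\tilde d$, one reads off in $B_{r_0}$ a common tilted direction $\nu_n=e_d+\eps_n\nabla'\tilde w(0)+o(\eps_n)$ and split slopes $\alpha_n=1+\tfrac{\eps_n}{2}\partial_{x_d}\tilde d(0)$, $\beta_n=1-\tfrac{\eps_n}{2}\partial_{x_d}\tilde d(0)$, so that $u_n\approx\alpha_n(x\cdot\nu_n)^+$ and $v_n\approx\beta_n(x\cdot\nu_n)^+$ up to an error $O(\eps_n r_0^2)$. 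One has $\alpha_n\beta_n=1+O(\eps_n^2)$, and the discrepancy is reabsorbed by an $O(\eps_n^2)$ adjustment of the slopes that restores $\alpha_n\beta_n=\Lambda$ without affecting the flatness budget. Choosing $r_0$ universal with $Cr_0\le\tfrac12$ yields the improved flatness $\tfrac{\eps_n}{2}r_0$ in $B_{r_0}$, contradicting the assumed failure and proving the lemma. The main obstacle is the third step: because $w=\sqrt{u_\O v_\O}$ is superharmonic rather than harmonic, one side of the linearized Neumann condition is unavailable from $w$ itself, and it is exactly here that the infimum structure $w=\inf_{ab=1}w_{ab}$ together with condition (iii) of \cref{d:viscos-fg} must be invoked to supply the missing harmonic barrier.
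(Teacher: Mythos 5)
Your proposal follows essentially the same route as the paper's sketch: De Silva's partial Harnack/linearization scheme run not on $u_\O$ and $v_\O$ separately but on the geometric mean $w=\sqrt{u_\O v_\O}$ together with the harmonic family $w_{ab}=\frac12(au_\O+bv_\O)$, with condition (iii) of \cref{d:viscos-fg} supplying the subsolution half of the dichotomy and the recombination at the end producing the tilt $\nu$ and the split slopes $\alpha,\beta$ with $\alpha\beta=\Lambda$. The identification of $w$ as the infimum of the harmonic functions $w_{ab}$ (hence superharmonic, hence the supersolution that gets improved from below) and of $\frac12(u_\O+v_\O)$ as the harmonic subsolution that gets improved from above is exactly the structure behind the dichotomy described after the lemma. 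Two points, however, need repair before the argument runs.

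First, the pairing in your ``crucial step'' is backwards. The half that works directly on $w$ is the \emph{touching-from-below} half: superharmonicity of $w$ plus condition (i) make $w$ a genuine viscosity supersolution, and sliding a subharmonic barrier up underneath a superharmonic function is protected by the comparison principle. The half that fails for $w$, and must be outsourced to $w_{ab}$ via condition (iii), is the \emph{touching-from-above} (subsolution) half: sliding a barrier down onto a superharmonic function can produce an interior contact point with no contradiction, so condition (ii) cannot be ``run directly against $w$ using its superharmonicity.'' As literally written that step would not go through; your closing sentence contains the correct idea, but the roles of (i) and (ii) must be exchanged. Second, the equicontinuity of $\tilde u_n,\tilde v_n$ does not follow ``through the two-sided bounds'' from that of $\tilde w_n$ --- boundedness is not compactness, and this is precisely the transfer-back difficulty the paper resolves with a comparison argument based on the boundary Harnack principle. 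In your setup the burden is carried entirely by $\tilde d_n=(u_n-v_n)/\eps_n$, which is harmonic, uniformly bounded and vanishes on $\partial\O_{u_n}$; you still need to prove its uniform H\"older continuity up to the merely $\eps$-flat free boundary by a barrier/harmonic-measure estimate before the odd reflection and the recombination $\tilde u=\tilde w\pm\frac12\tilde d$ become available. With those two corrections the scheme matches the one in \cite{MaialeTortoneVelichkov:RMI} that the paper summarizes.
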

By the definition of flat solutions, it is evident that the "improvement" must be expressed both in terms of the new direction of the approximating flat domain (i.e., the vector $\nu \in \mathbb{R}^d$) and the coefficients associated with the free boundary condition (i.e., $\alpha, \beta \in \R$). By controlling how these variables evolve at smaller scales, one can deduce both the existence of a unique normal vector at $0 \in \partial \Omega_u$ and the values of the slopes $|\nabla u_\Omega(0)|$ and $|\nabla u_\Omega(0)|$. This phenomenon reflects the invariance of the free boundary conditions under scalar multiplication of the state variables.
 
The main idea of the proof consists in tracking the improvement of the variables $\sqrt{u_\O v_\O}$ and $\frac12(u_\O+v_\O)$, rather than working component-wise. For instance, when $f=g=0$ the equations for $\sqrt{u_\O v_\O}$ and $\frac12(u_\O+v_\O)$ assume the following simple form 
$$
\begin{cases}
\Delta \sqrt{u_\O v_\O} \geq 0 &\mbox{in }\O_u\cap B_1\\
|\nabla \sqrt{u_\O v_\O}|=\sqrt{\Lambda} &\mbox{on }\partial \O_u\cap B_1,
\end{cases}\quad
\begin{cases}
\Delta(\frac12(u_\O + v_\O))   = 0 &\mbox{in }\O_u\cap B_1\\
|\nabla(\frac12(u_\O + v_\O)|\leq \sqrt{\Lambda} &\mbox{on }\partial \O_u\cap B_1,
\end{cases}
$$
namely they are respectively a viscosity subsolution and supersolution of the (scalar) one-phase problem (in the previous computations we exploit that $u_\O, v_\O$ are non-negative). 
Now, it is easy to see that the if $u_\O, v_\O$ are $\eps$-flat, then the two auxiliary variables inherit the flatness assumption.\\
Notice that if $u_\O, v_\O$ are $\eps$-flat, then
$$
0\leq \frac{u_\O + v_\O}{2}- \sqrt{u_\O v_\O} \leq C\eps^2\qquad \mbox{in }\O_u\cap B_1.
$$
Thus, we can show that the following dichotomy holds:
\begin{enumerate}
    \item[(i)] either the flatness $\frac12(u_\O+v_\O)$ is improved from above
    \item[(ii)] or the flatness of $\sqrt{u_\O v_\O}$ is improved from below. 
\end{enumerate}
Now, notice that we cannot transfer this information back to $u_\O$ and $v_\O$ just be an algebraic manipulation; for instance, a bound from below on $\sqrt{u_\O v_\O}$ does not a priori imply a bound from below on both $u_\O$ and $v_\Omega$. On the other hand, one can easily notice that the improved flatness of $\sqrt{u_\O v_\O}$ or $\frac12(u_\O+v_\O)$, in particular, implies that in smaller (universal) ball the boundary $\partial\Omega$ is trapped between two nearby translations of a half-space. Using this geometric information and a comparison argument based on the boundary Harnack principle, we can deduce that also the flatness of $u_\O$ and $v_\O$ improves in a smaller (universal) scale.
\subsection{Analysis of the singular set under measure constrain}\label{s:singular.constrained} A remarkable difference between the penalized problem and the one with measure constrain lies in the study of singular points. Indeed, by following the program from Subsection \ref{s:triple}, it is necessary to formulate a notion of stability with respect to volume preserving inner variations.
Therefore, given any smooth vector field with compact support $\eta \in C^\infty_c(\R^d;\R^d)$, we need to construct a \emph{volume preserving flow}, whose infinitesimal generator coincides with $\eta$ in a neighborhood of $\{\eta\neq 0\}$. Then, by extending to the case of second variations the well-known program for measure-constrained optimization problems (see, for instance, \cite[Section 11]{Velichkov:BookRegularityOnePhaseFreeBd} and \cite[Section 17.5]{maggi}), we can show that if $\O$ is a shape optimizer for the measure constrained problem, then there exists a Lagrange multiplier $\Lambda \geq 0$ such that $\Omega$ is stable for the functional
    $$
J_\Lambda(\O) := J(\O)  + \Lambda |\O|,
$$
with respect to volume-preserving domain variations (see \cite[Theorem 2.1]{MazzoleniTortoneVelichkov:JConvexAnal}). Precisely \begin{equation}
\delta J_\Lambda(\O)[\eta]=0\qquad\text{and}\qquad \delta^2 J_\Lambda(\O)\left[\eta - \frac{\int_\O \dive\eta\,dx}{\int_\O \dive\xi\,dx}\xi\right]\ge0\,,
\end{equation}
for every smooth vector fields $\eta,\xi \in C^\infty_c(\R^d;\R^d)$ with disjoint compact supports satisfying
$$
\int_\O \dive\xi \,dx\neq 0.
$$
Then, by following the argument presented in Subsection \ref{s:triple}, it is possible to show that the notion of stability with respect to volume-preserving variations is preserved under blow-up. Precisely, given $x_0 \in \partial \O\cap D$, there exists a sequence $r_k\to 0$ such that  
$$\lim_{k\to\infty}\frac{1}{r_k}u_\O(x_0+r_k x)=u_0(x)\qquad\text{and}\qquad \lim_{k\to\infty}\frac{1}{r_k}v_\O(x_0+r_k x)=v_0(x)\,$$
locally uniformly in $\R^d$, strongly in $H^1_{loc}(\R^d)$. Moreover, $u_0$ and $v_0$ are proportional and there exists $\lambda>0$ such that $\lambda u_0$ is a global stable solution of the one-phase problem along mean-zero variations.
For the sake of completeness, we recall here the notion of stability with respect to mean-zero variations introduced in \cite{MazzoleniTortoneVelichkov:JConvexAnal}.
\begin{definition}[Global stability along mean-zero variations of the one-phase problem]\label{def:global-stable-solutions-meanzero}
A function $u:\R^d\to\R$ is said to be a global stable solution of Alt-Caffarelli problem \emph{along mean-zero variations} if:
\begin{enumerate}
\item[\rm(1)] conditions $(\mathcal A1), (\mathcal A2), (\mathcal A3)$, $(\mathcal A4)$, $(\mathcal A5)$ of \cref{def:global-stable-solutions} are satisfied\,\rm;\it
\item[\rm(2)] for every couple $\eta,\xi \in C^\infty_c(\R^d;\R^d)$ of smooth vector fields with disjoint compact supports satisfying
    $$
\int_{\O_u} \dive\xi \,dx\neq 0,
$$
it holds that
\be\label{e:stab.mean}
\delta \mathcal G_\Lambda(u)[\eta]=0\qquad\text{and}\qquad \delta^2 \mathcal G_\Lambda(u)\left[\eta - \frac{\int_{\O_u} \dive\eta\,dx}{\int_{\O_u} \dive\xi\,dx}\xi\right]\ge0\,.
\ee
\end{enumerate}
\end{definition}
Thus, the proof is concluded once we show that the estimates on the dimension of the singular part of the free boundary hold for solutions of the Alt-Caffarelli functional that are stable only with respect to mean-zero variations (see \cite[Theorem 1.9]{MazzoleniTortoneVelichkov:JConvexAnal}). The main idea behind this argument is to show that, for homogeneous solution, the stability with respect to mean-zero variations is indeed equivalent to the one defiend by Caffarelli, Jerison and Kenig in \cite{CaffarelliJerisonKenig04:NoSingularCones3D}.
\section{Open problems}
We conclude this survey by collecting some open questions related specifically to the vectorial Bernoulli free boundary problem. For the sake of completeness, we wish to highlight that, in recent years, many authors have contributed to the broader theory of vector-valued free boundary problems by exploring similar generalizations in the context of obstacle-type problems, semilinear equations, and constrained maps (e.g., \cite{jeon1,jeon2,FigalliGuerraKim2023,figalli2024,desilvaSavin:energyharmonic}). Therefore, in this final section, we restrict our focus to open problems that directly stem from the results and techniques presented in this survey.
\subsection{Optimal domains for functionals involving Dirichlet Eigenvalues}
In \cite{KriventsovLin2018:nondegenerate} the authors were interested in the differences between \vspace{0.1cm}
$$
\text{(I) $F$ is non-degenerate} \quad\longrightarrow\quad \text{(II)\, \txt{$F$ is differentiable \\ with respect to \\ domain variations.}}
$$
\phantom{c}\vspace{-0.1cm}\\
Indeed, while under (II) the free boundary
condition can be obtained explicitly in terms of the derivatives of $F$ with respect to domain variations, in case (I) the free
boundary condition is deduced by an approximation argument and the domain should be thought of as \emph{stationary} for the corresponding free boundary problem.\\ 
Moreover, for what it concerns the study of singular points, the difference between the cases (I) and (II) can be rephrased in terms of Property S and Property E (see \eqref{e:property-SE}). Then, in the case of degenerate functionals \cite{KriventsovLin2019:degenerate}, the authors deepen the study of the approximation in relation to the problem of singular points. The main limitation of their approximation scheme lies in the loss of information in the passage to the limit as $p \to \infty $. In particular, it would be interesting to develop new approximation methods for both non-degenerate functionals of type (I) and degenerate ones, with the aim of recovering, in the original problem, the existence of a vector of eigenfunctions that satisfies a rigidity condition at singular points. Such condition should ideally emerge as the limit of second-order variations of the approximating problems with respect to domain deformations.
\begin{open}[Improved estimate for singular points]\label{e:open-singular}
Is it possible to improve that Hausdorff estimate for (one-phase) singular points $\mathrm{Sing}_1(\partial \O)$ in the case of degenerate functionals (also in the case of non-degenerate functionals non-differentiable with respect to domain variations)? 
\end{open}
Similarly, the regularity of the two-phase singular set remains an open problem. Since this problem persists even at the level of minimizers, we postpone the discussion to \cref{e:open-branching}.\\

In recent years, different authors considered the problem of understand the behavior of the boundary of optimal shapes $\O$ close the boundary of the fixed container $\partial D$. In the case of spectral optimization problem such result has been studied only in the case of minimization associated to the first and the second Dirichlet eigenvalues (see for instance \cite{RussTreyVelichkov:CalcVarPDE,MazzoleniTreyVelichkov:AnnIHP}), where the authors exploited the regularity result for the \emph{obstacle} one-phase Bernoulli problem of \cite{ChangLaraSavin:BoundaryRegularityOnePhase}. Therefore, it is possible to prove that in the case of non-degenerate functionals, 
$$
\partial \O = \mathrm{Reg}(\partial \O)\cup \mathrm{Sing}(\partial \O),
$$
where $\mathrm{Reg}(\partial \O) = \mathrm{Reg}(\partial \O\cap D) \cup (\partial \O\cap \partial D)$ is locally the graph of a $C^{1,1/2}$-function. On the other hand, the singular set $ \mathrm{Sing}(\partial \O)$ is strictly contained in $D$ (i.e., $\mathrm{Sing}(\partial \O) =  \mathrm{Sing}(\partial \O\cap D)$) and the result coincides with the one of \cref{s:eigenvalues}. Thus, a natural question arises.
\begin{open}[Regularity close to Dirichlet boundaries]
    What can we say about the boundary regularity for optimal shapes associated to degenerate functionals? In other words, what is the behavior of the boundary of optimal shapes $\partial \O$ close to the Dirichlet fixed boundary $\partial D$?   
\end{open}
Notice also that the analysis of shape optimizers related to spectral problems has been recently generalized to the case of vector-valued shape optimization problems involving the spectrum of a non-local operator. In \cite{Tortone:shapefractional} the first named author considered the minimization problem associated to eigenvalues of the fractional Laplacian
$$
\begin{aligned}
\lambda_1^s(\Omega)&:=\min\left\{\frac{[u]^2_{H^s(\R^d)}}{\|u\|^2_{L^2(\R^d)}} : u\in H^s_0(\Omega)\setminus \{0\}\right\},\\
\lambda_i^s(\Omega)&:=\min_{E_i\subset H^s_0(\Omega)} \max_{u\in E_i\setminus \{0\}}\frac{[u]^2_{H^s(\R^d)}}{\|u\|^2_{L^2(\R^d)}},\quad\text{if $i\geq 2$},
\end{aligned}
$$
where the latter minimum is taken over all $i$-dimensional linear subspaces $E_i$ of $H^s_0(\Omega)$. Among several differences, in this case the free boundary condition takes the form
$$
\lim_{t \to 0^+}\frac{|U|(x+t\nu_x)}{t^s} = \frac{1}{\Gamma(1+s)}\frac{2s}{d}\sum_{i=1}^k \lambda_i^s(\O) \qquad \mbox{for every }x \in \partial\O\cap D,
$$
where $\nu_x$ is the normal vector at $x\in \partial \O$ pointing towards the interior of $\O$ and $|U|$ the vector of the first $k$ normalized eigenfunctions. Precisely, in \cite{Tortone:shapefractional} has been studied the case of non-degenerate vectorial problems
$$
\mathrm{min}\left\{\lambda_1^s(\O)+\cdots+\lambda_k^s(\O)+\Lambda|\O|\colon \O\subset D\text{ open}\right\}
$$
for some bounded smooth container $D\subset \R^d$. A first result concerning the study of degenerate problems has recently been obtained: specifically, in \cite{zahl}, the shape optimization problem associated with the $k$-th eigenvalue of the $s$-Laplacian was investigated under the assumption of simplicity. As can be seen, the picture is far from being complete, and it would be interesting to understand whether the nonlocal effect of the operator influences both the optimal shapes and their regularity.
\begin{open}[Degenerate function of fractional eigenvalues]
Given $s \in (0,1)$, can the analysis in \cite{Tortone:shapefractional} be extended to the class of degenerate functionals $F \colon \mathbb{R}^k \to \mathbb{R}$, without assuming simplicity of the $k$-th eigenvalue?
\end{open}
Moreover, it will be interesting to adapt the methodologies recently introduced in \cite{RosOtonWeidner:optimal,RosOtonWeidner:improvement} to the spectral case. 
\begin{open}[Nonlocal spectral problem]
Given $s\in (0,1)$, consider the class of symmetric $2s$-stable integro-differential operators defined by 
$$
Lu(x):= 2\,\mathrm{p.v. }\int_{\R^n}(u(x)-u(y))K(x-y)\,dy,
$$
where $K\colon \R^d \to [0,+\infty]$ is assumed to satisfy
$$
\lambda |h|^{-d-2s}\leq K(h)\leq \Lambda |h|^{-d-2s},\quad
K(h)=K(-h),\quad K(h)=\frac{K(h/|h|)}{|y|^{d+2s}},
$$
for some $0<\lambda\leq \Lambda $. Can the analysis carried out for the spectral optimization problem associated with $(-\Delta)^s$ be extended to this class of integro-differential operators? 
\end{open}

\subsection{Vectorial Bernoulli problem}  For what concerns the two-phase singular points of rank higher than one, the classification of matrices which are global minimizers is currently an open problem. Moreover, the structure of the two-phase singular set closely resembles that of the singular set in the classical obstacle problem. Nevertheless, the presence of a Bernoulli-type free boundary condition prevents a direct application of standard techniques such as monotonicity formulas (e.g., of Weiss, Almgren, or Monneau type), and thus hinders the development of higher-order blow-up analysis.
\begin{open}[Regularity of the two-phase singular set $\rm Sing_2$]\label{e:open-branching}
Classify the blow-up limits at points $x_0 \in \mathrm{Sing}_2(\partial \Omega_U)$ with rank $1 < \mathrm{Rk}(x_0) \leq k$. Moreover, for each $j = 1, \dots, k$, does there exist a modulus of continuity $\omega$ such that $\mathcal{S}_j$ is contained in a countable union of $j$-codimensional $C^{1,\omega}$ manifolds?\end{open}
Similarly, it could be interesting to understand the generic regularity for the vectorial one-phase problem: in light of the analogue result for the scalar setting \cite{generic-hui}, we may expect that 
$$
\text{generically $\mathrm{dim}_{\mathcal{H}}(\mathrm{Sing}_1(\partial \O_U)) \leq d-d^*-1.$}
$$
\begin{open}[Generic regularity]
Establish generic regularity of free boundaries for the vectorial Bernoulli problem.
\end{open}
We notice that we cannot expect that the two-phase singular set $\rm Sing_2$ becomes small for generic perturbations, since this is not true even for the scalar two-phase problem. 

\subsection{Optimal domains for Dirichlet energy functionals}
The analysis related to the \emph{triple consecutive blow-up} is quite robust and relies on mild assumptions, which suggest that it could be applied to more general choices of $f$ and $g$. Nevertheless, the main restriction of our analysis is that we consider $f,g \colon D \to \mathbb{R}$ satisfying the conditions
\begin{enumerate}
    \item[(a)] $f,g \in L^\infty(D)\cap C^\infty(D)$ and non-negative;
    \item[(b)] there are constants $C_1, C_2 > 0$ such that
    $
    0\leq C_1 g \leq f \leq C_2 g \mbox{ in } {D}.
    $
\end{enumerate}
As we have already emphasized, the second assumption is crucial for showing that the state variables of the problem are both non-degenerate and Lipschitz continuous, which is the natural starting point for developing the regularity theory of the free boundaries. It would be interesting to remove this assumption by considering
\begin{open}[Generalized $f$ and $g$]\label{o:generalizedfg}
Generalize the study of the state variables to the case of non-comparable $f$ and $g$.
\end{open}
Among various examples, particularly interesting is the problem for $j(u_A, x) = -u_A^2$, where $u_A$ is the torsion function (i.e., $-\Delta u_A = 1 $ in $A$). This corresponds to the case $f = 1$ and $g = u_A$.

\section*{Acknowledgements}
The authors were supported by the European Research Council's (ERC) project n.853404 ERC VaReg - \it Variational approach to the regularity of the free boundaries\rm, financed by the program Horizon 2020. The authors acknowledge the MIUR Excellence Department Project awarded to the Department of Mathematics, University of Pisa, CUP I57G22000700001. B.V. also acknowledges support from the project MUR-PRIN “NO3” (n.2022R537CS). G.T. is member of INdAM-GNAMPA.
\section*{Statements and Declarations} 
The authors declare that they have no conflict of interest regarding the publication of this manuscript.
\bibliographystyle{abbrv}
\bibliography{biblio.bib}

\end{document}